\let\latexcirc=\circ
\newcommand{\ccirc}{\mathbin{\mathchoice
  {\xcirc\scriptstyle}
  {\xcirc\scriptstyle}
  {\xcirc\scriptscriptstyle}
  {\xcirc\scriptscriptstyle}
}}
\newcommand{\xcirc}[1]{\vcenter{\hbox{$#1\latexcirc$}}}
\let\circ\ccirc
\def\Ddots{\mathinner{\mkern1mu\raise\p@
\vbox{\kern7\p@\hbox{.}}\mkern2mu
\raise4\p@\hbox{.}\mkern2mu\raise7\p@\hbox{.}\mkern1mu}}
\newtheorem{theorem}{Theorem}[section]
\newtheorem{proposition}[theorem]{Proposition}
\newtheorem{proposition/definition}[theorem]{Proposition/Definition}
\newtheorem{lemma}[theorem]{Lemma}
\newtheorem{corollary}[theorem]{Corollary}
\theoremstyle{definition}
\newtheorem{definition}[theorem]{Definition}
\newtheorem{example}[theorem]{Example}
\theoremstyle{remark}
\newcommand{\tp}{{\scriptscriptstyle\mathsf{T}}}
\newcommand{\F}{{\scriptscriptstyle\mathsf{F}}}
\newcommand{\lb}{\llbracket}
\newcommand{\rb}{\rrbracket}
\newcommand{\grad}{\nabla\!}
\newcommand{\Hess}{\nabla^2\!}
\newcommand{\eig}{\mathcal{E}}
\newcommand{\cay}{\mathcal{C}}
\newcommand{\qr}{\mathcal{Q}}
\let\O\undefined
\DeclareMathOperator{\Gr}{Gr}
\DeclareMathOperator{\V}{V}
\DeclareMathOperator{\GL}{GL}
\DeclareMathOperator{\O}{O}
\DeclareMathOperator{\tr}{tr}
\DeclareMathOperator{\proj}{proj}
\DeclareMathOperator{\im}{im}
\DeclareMathOperator{\rank}{rank}
\DeclareMathOperator{\diag}{diag}
\DeclareMathOperator{\spn}{span}
\DeclareMathOperator*{\argmin}{argmin}
\DeclareMathOperator{\sinc}{sinc}
\DeclareMathOperator{\vect}{vec}
\DeclareMathOperator{\T}{\mathbb{T}}
\DeclareMathOperator{\N}{\mathbb{N}}
\DeclareMathOperator{\B}{B}
\newcommand{\Rmnum}[1]{\expandafter\@slowromancap\Romannumeral #1@} 
\begin{document}
\title{Simpler Grassmannian optimization}
\author[Z.~Lai]{Zehua Lai}
\address{Computational and Applied Mathematics Initiative,
University of Chicago, Chicago, IL 60637-1514.}
\email{laizehua@uchicago.edu, lekheng@uchicago.edu}
\author[L.-H.~Lim]{Lek-Heng~Lim}
\author[K.~Ye]{Ke~Ye}
\address{KLMM, Academy of Mathematics and Systems Science, Chinese Academy of Sciences,
Beijing 100190, China}
\email{keyk@amss.ac.cn}

\begin{abstract}
There are two widely used models for the Grassmannian $\Gr(k,n)$, as the set of equivalence classes of orthogonal matrices $\O(n)/\bigl(\O(k) \times \O(n-k)\bigr)$, and as the set of trace-$k$ projection matrices $\{P \in \mathbb{R}^{n \times n} : P^\tp = P = P^2,\; \tr(P) = k\}$. The former, standard in manifold optimization, has the advantage of giving numerically stable algorithms but the disadvantage of having to work with equivalence classes of matrices. The latter, widely used in coding theory and probability, has the advantage of using actual matrices (as opposed to equivalence classes) but working with projection matrices is numerically unstable. We present an alternative that has both advantages and suffers from neither of the disadvantages; by representing $k$-dimensional subspaces as symmetric orthogonal matrices of trace $2k-n$, we obtain
\[
\Gr(k,n) \cong \{Q \in \O(n) : Q^\tp = Q, \; \tr(Q) = 2k -n\}.
\]
As with the other two models, we show that differential geometric objects and operations --- tangent vector, metric, normal vector, exponential map, geodesic, parallel transport, gradient, Hessian, etc --- have closed-form analytic expressions that are computable with standard numerical linear algebra. In the proposed model, these expressions are considerably simpler, a result of representing $\Gr(k,n)$ as a linear section of a compact matrix Lie group $\O(n)$, and can be computed with at most one \textsc{qr} decomposition and one exponential of a special skew-symmetric matrix that takes only $O\bigl(nk(n-k)\bigr)$ time. In particular, we completely avoid eigen- and singular value decompositions in our steepest descent, conjugate gradient, quasi-Newton, and Newton methods for the Grassmannian. 
\end{abstract}

\subjclass[2010]{14M15, 90C30, 90C53, 49Q12, 65F25, 62H12}

\keywords{Grassmannian, Grassmann manifold, manifold optimization}

\maketitle

\section{Introduction}\label{sec:intro}

As a manifold, the Grassmannian $\Gr(k,n)$ is just the set of $k$-planes in $n$-space with its usual differential structure; this is an abstract description that cannot be employed in algorithms and applications. In order to optimize functions $f :  \Gr(k,n) \to \mathbb{R}$ using currently available technology, one needs to put a coordinate system on $\Gr(k,n)$. The best known way, as discovered by Edelman, Arias, and Smith in their classic work \cite{EAS}, is to realize $\Gr(k,n)$ as a \emph{matrix manifold} \cite{AMS}, where every point on $\Gr(k,n)$ is represented by a matrix or an equivalence class of matrices and from which one may derive closed-form analytic expressions for other differential geometric objects (e.g., tangent, metric, geodesic) and differential geometric operations (e.g., exponential map, parallel transport) that in turn provide the necessary ingredients (e.g., Riemannian gradient and Hessian, conjugate direction, Newton step) for optimization algorithms. The biggest advantage afforded by the approach in \cite{EAS} is that a judiciously chosen system of extrinsic \emph{matrix coordinates} for points on $\Gr(k,n)$ allows all aforementioned objects, operations, and algorithms to be computed solely in terms of standard \emph{numerical linear algebra}, which provides a ready supply of stable and accurate algorithms \cite{GVL} with high-quality software implementations \cite{LAPACK}. In particular, one does not need to solve any differential  equations numerically when doing optimization on matrix manifolds \`a la \cite{EAS}.

\subsection{Existing models} There are two well-known models for $\Gr(k,n)$ supplying such matrix coordinates --- one uses orthogonal matrices and the other projection matrices. In optimization, the by-now standard model (see, for example, \cite{Drei,JD,MS,SS,WY}) is the one introduced in \cite{EAS}, namely,
\begin{equation}\label{eq:EAS}
\Gr(k,n) \cong \O(n)/\bigl(\O(k) \times \O(n-k)\bigr) \cong \V(k,n)/\O(k),
\end{equation}
where  $\V(k,n) \coloneqq \{V \in \mathbb{R}^{n \times k} : V^\tp V = I \} \cong \O(n)/\O(n-k)$ is the Stiefel manifold. In this homogeneous space model, which is also widely used in areas other than optimization \cite{BCN, BN, GS, HHSLS, MD, MKJS, ZT}, a point $\mathbb{V} \in \Gr(k,n)$, i.e., a $k$-dimensional subspace $\mathbb{V} \subseteq \mathbb{R}^n$, is represented by its orthonormal basis, written as columns of a matrix $V = [v_1,\dots,v_k] \in \V(k,n)$. Since any two orthonormal bases $V_1, V_2 \in \V(k,n)$ of $\mathbb{V}$ must be related by $V_1 = V_2 Q$ for some $Q \in \O(k)$, such a representation is not unique and so this model requires that we represent $\mathbb{V}$ not as a single $n \times k$ orthonormal matrix but as a whole equivalence class $\lb V \rb \coloneqq \{VQ \in \V(k,n) : Q \in \O(k) \}$ of orthonormal bases of $\mathbb{V}$. A brief word about our notations: Throughout this article, we adopt the convention that a vector space  $\mathbb{V} \in \Gr(k,n)$ will be typeset in blackboard bold, with the corresponding letter in normal typeface $V \in  \V(k,n)$ denoting an (ordered) orthonormal basis. Equivalence classes will be denoted in double brackets, so $\lb V \rb = \mathbb{V}$. Diffeomorphism of two smooth manifolds will be denoted by $\cong$.

It is straightforward to represent a point $\mathbb{V} \in \Gr(k,n)$ by an actual matrix as opposed to an equivalence class of matrices. Since any subspace $\mathbb{V}$ has a unique orthogonal projection matrix $P_\mathbb{V}$, this gives us an alternative model for the Grassmannian that is also widely used (notably in linear programming \cite{SSZ,Zhao} but also many other areas \cite{Chi,CHRSS,Conway,EG,Mattila,Nicolaescu}):
\begin{equation}\label{eq:HHT}
\Gr(k,n) \cong \{P \in \mathbb{R}^{n \times n} : P^\tp = P = P^2,\; \tr(P) = k\}.
\end{equation}
Note that $\rank(P) = \tr(P) = \dim(\mathbb{V})$ for orthogonal projection matrices. The reader is reminded that an \emph{orthogonal} projection matrix is not an orthogonal matrix --- the `orthogonal' describes the projection, not the matrix. To avoid confusion, we drop `orthogonal' from future descriptions --- all projection matrices in our article will be orthogonal projection matrices.

As demonstrated in \cite{HHT}, it is also possible to derive closed-form analytic expressions for various differential geometric objects and present various optimization algorithms in terms of the matrix coordinates in \eqref{eq:HHT}.  Nevertheless, the problem with the model \eqref{eq:HHT} is that algorithms based on projection matrices are almost always \emph{numerically unstable}, especially in comparison with algorithms based on orthogonal matrices. Roughly speaking an orthogonal matrix preserves (Euclidean) norms and therefore rounding errors do not get magnified through a sequence of orthogonal transformations  \cite[Section~3.4.4]{Demmel} and consequently algorithms based on orthogonal matrices tend to be numerically stable (details are more subtle, see \cite[pp.~124--166]{Wilkinson} and \cite{Higham}). Projection matrices not only do not preserve norms but are singular and  give notoriously unstable algorithms --- possibly the best known illustration of numerical instability \cite{Trefethen,Watkins} is one that contrasts Gram--Schmidt, which uses projection matrices, with Householder \textsc{qr}, which uses orthogonal matrices.\footnote{For example, computing the \textsc{qr} decomposition of a Hilbert matrix $A = [1/(i+j-1) ]_{i,j=1}^{15}$, we get $\lVert Q^* Q - I \rVert \approx 8.0 \times 10^{0}$ with Gram--Schmidt, $1.7 \times 10^{0}$ with modified Gram--Schmidt, $2.4 \times 10^{-15}$ with Householder \textsc{qr}.} In fact, the proper way to compute projections is to do so via a sequence of orthogonal matrices \cite[pp.~260--261]{Stewart}, as a straightforward computation is numerically unstable \cite[pp.~849--851]{CRT}.

The alternative \eqref{eq:EAS} is currently universally adopted for optimization over a Grassmannian. The main issue with the model \eqref{eq:EAS} is that a point on $ \Gr(k,n)$ is not a single matrix but an \emph{equivalence class} of uncountably many matrices. Equivalence classes are tricky to implement in numerical algorithms and standard algorithms in numerical linear algebra \cite{LAPACK}  do not work with equivalence classes of matrices. Indeed, any optimization algorithm \cite{Drei,EAS,JD,MS,SS,WY} that rely on the model \eqref{eq:EAS} side steps the issue by instead optimizing an $\O(k)$-invariant function $\tilde{f} : \V(k,n) \to \mathbb{R}$, i.e., where $\tilde{f}(VQ) = \tilde{f}(V)$ for all $Q \in \O(k)$. This practice makes it somewhat awkward to optimize over a Grassmannian: Given a function $f : \Gr(k,n) \to \mathbb{R}$ to be optimized, one needs to first lift it to another function $\tilde{f} : \V(k,n) \to \mathbb{R}$ and the choice of $\tilde{f}$ is necessarily ad hoc as there are uncountably many possibilities for $\tilde{f}$. Nevertheless this is presently the only viable option.

Numerical stability is of course relative, we will see in Section~\ref{sec:num}, for reasons explained therein, that the optimization algorithms in \cite{EAS} for the model \eqref{eq:EAS} are significantly less stable than those for our proposed model. In particular, the aforementioned loss-of-orthogonality remains very much an issue when one uses \eqref{eq:EAS} to represent a Grassmannian.

We would like to mention a noncompact analogue of \eqref{eq:EAS} that is popular in combinatorics \cite{AM,FP,GP,Karp,LF}:
\begin{equation}\label{eq:AMS}
\Gr(k,n) \cong \mathbb{R}^{n \times k}_k/\GL(k),
\end{equation}
with $\mathbb{R}^{n \times k}_k\coloneqq  \{A \in \mathbb{R}^{n \times k} : \rank(A) = k\}$. It has also been shown \cite{AMS} that one may obtain closed-form analytic expressions for differential geometric quantities with the model \eqref{eq:AMS} and so in principle one may use it for optimization purposes. Nevertheless, from the perspective of numerical algorithms, the model \eqref{eq:AMS} suffers from the same problem as \eqref{eq:HHT} --- by working with rank-$k$ matrices, i.e., whose condition number can be arbitrarily large, algorithms based on \eqref{eq:AMS} are inherently numerically unstable. In fact, since the model \eqref{eq:AMS} also represents points as equivalence classes, it has both shortcomings of \eqref{eq:EAS} and \eqref{eq:HHT} but neither of their good features. The natural redress of imposing orthogonal constraints on \eqref{eq:AMS} to get a well-conditioned representative for each equivalence class would just lead one back to the model \eqref{eq:EAS}.

Looking beyond optimization, we stress that each of the aforementioned models has its own (sometimes unique) strengths. For example, \eqref{eq:AMS} is the only model we know in which one may naturally define the positive Grassmannian \cite{FP}, an important construction in combinatorics \cite{Karp} and physics \cite{GP}. The model \eqref{eq:HHT} is indispensable in probability and statistics as probability measures \cite[Section~3.9]{Mattila} and probability densities \cite[Section~2.3.2]{Chi} on $\Gr(k,n)$ are invariably expressed in terms of projection matrices.

\subsection{Proposed model} We propose to use a model for the Grassmannian that combines the best features, suffers from none of the defects of  the aforementioned models, and, somewhat surprisingly, is also simpler:
\begin{equation}\label{eq:LLY}
\Gr(k,n) \cong \{Q \in \O(n) : Q^\tp = Q,\; \tr(Q) = 2k - n\}.
\end{equation}
This model, which represents $k$-dimensional subspace as a symmetric orthogonal matrix of trace $2k - n$, is known but obscure. It was mentioned in passing in \cite[p.~305]{Bhatia} and was used in \cite{JMS} to derive geodesics for the \emph{oriented Grassmannian}, a different but related manifold. Note that \eqref{eq:LLY} merely provides an expression for \emph{points}, our main contribution is to derive expressions for other differential geometric objects and operations, as well as the corresponding optimization algorithms, thereby fully realizing \eqref{eq:LLY} as a model for optimization.  A summary of these objects, operations, and algorithms is given in  Table~\ref{tab:formulas}. From a differential geometric perspective, Sections~\ref{sec:points}--\ref{sec:gradHess} may be regarded as an investigation into the embedded geometry of $\Gr(k,n)$ as a submanifold of $\O(n)$.
\begin{table}[h]
\footnotesize
\tabulinesep=0.5ex
\begin{tabu}{l|l}
\textsc{objects/operations} & \textsc{results}\\\hline\hline
point & Proposition~\ref{prop:points}\\\hline
change-of-coordinates & Proposition~\ref{prop:G1}, Proposition~\ref{prop:G2}, Proposition~\ref{prop:G3}, Proposition~\ref{prop:G4}\\\hline
tangent vector & Proposition~\ref{prop:tangent}, Proposition~\ref{prop:transtan}, Corollary~\ref{cor:tangent space}\\\hline
metric & Proposition~\ref{prop:metric}, Proposition~\ref{prop:isometric} \\\hline
normal vector & Proposition~\ref{prop:normal}, Corollary~\ref{cor:projection} \\\hline
curve & Proposition~\ref{prop:curve} \\\hline
geodesic & Theorem~\ref{thm:geo}, Proposition~\ref{prop:geo} \\\hline
geodesic distance & Corollary~\ref{cor:geodist}\\\hline
exponential map & Corollary~\ref{cor:exp} \\\hline
logarithmic map & Corollary~\ref{cor:log} \\\hline
parallel transport & Proposition~\ref{prop:pt}\\\hline
gradient & Proposition~\ref{prop:grad}, Corollary~\ref{cor:grad}\\\hline
Hessian & Proposition~\ref{prop:Hess}\\\hline
retraction and vector transport & Proposition~\ref{prop:Reig}, Proposition~\ref{prop:Rqr}, Proposition~\ref{prop:Rcay}\\\hline
steepest descent & Algorithm~\ref{alg:cay}, Algorithm~\ref{alg:sd}\\\hline
Newton method & Algorithm~\ref{alg:nt}\\\hline
conjugate gradient & Algorithm~\ref{alg:cg}\\\hline
quasi-Newton & Algorithm~\ref{alg:qn}
\end{tabu}
\bigskip
\caption{Guide to results.}
\label{tab:formulas}
\end{table}

The two key advantages of the model \eqref{eq:LLY} in computations are immediate:
\begin{enumerate}[\upshape (i)]
\item points on $\Gr(k,n)$ are represented as actual matrices, not equivalence classes;
\item the matrices involved are orthogonal, so numerical stability is preserved.
\end{enumerate}
The bonus with \eqref{eq:LLY}  is that the expressions and algorithms in Table~\ref{tab:formulas} are considerably simpler compared to those in \cite{AMS,EAS,HHT}. We will not need to solve quadratic eigenvalue problems, nor compute $\exp$/$\cos$/$\sin$/$\sinc$ of nonnormal matrices, nor even \textsc{evd} or \textsc{svd} except in cases when they can be trivially obtained. Aside from standard matrix arithmetic, our optimization algorithms require just two operations:
\begin{enumerate}[\upshape (i)]\setcounter{enumi}{2}
\item all differential geometric objects and operations can be computed with at most a \textsc{qr} decomposition and an exponentiation of a skew-symmetric matrix,
\[
\exp \biggl( \begin{bmatrix}
0 & B \\
-B^\tp & 0
\end{bmatrix} \biggr), \quad B \in \mathbb{R}^{k \times (n-k)},
\]
which may in turn be computed in time $O\bigl(nk(n-k)\bigr)$ with a specialized algorithm based on Strang splitting.
\end{enumerate}
The problem of computing matrix exponential has been thoroughly studied and there is a plethora of algorithms \cite{Higham2, MVL}, certainly more so than other transcendental matrix functions like cosine, sine, or sinc \cite{Higham2}. For normal matrices, matrix exponentiation is a well-conditioned problem --- the numerical issues described in \cite{MVL} only occur with nonnormal matrices. For us, $\begin{bsmallmatrix} 0 & B \\
\smash[b]{-B^\tp} & 0 \end{bsmallmatrix}$ is skew-symmetric and thus normal; in fact its exponential will always be an orthogonal matrix.

There are other algorithmic advantages afforded by \eqref{eq:LLY} that are difficult to explain without context and will be discussed alongside the algorithms in Section~\ref{sec:algo} and numerical results in Section~\ref{sec:num}.

\subsection{Nomenclatures and notations}

For easy reference, we will introduce names for the models \eqref{eq:EAS}--\eqref{eq:LLY} based on the type of matrices used as coordinates for points.
\begin{table}[h]
\footnotesize
\tabulinesep=0.75ex
\begin{tabu}{l|l|l}
\textsc{name} & \textsc{model} &  \textsc{coordinates for a point}\\\hline\hline
orthogonal model & $\O(n)/\bigl(\O(k) \times \O(n-k)\bigr)$ & equivalence class of $n \times n$ orthogonal matrices  $\lb V \rb$ \\\hline
Stiefel model & $\V(k,n)/\O(k)$ &  equivalence class of $n \times k$ orthonormal matrices $\lb Y \rb$  \\\hline
full-rank model & $\mathbb{R}^{n \times k}_k/\GL(k)$ &  equivalence class of $n \times k$  full-rank matrices $\lb A \rb$ \\\hline
projection model & $\{P \in \mathbb{R}^{n \times n} : P^\tp = P = P^2,\; \tr(P) = k\}$ & $n \times n$ orthogonal projection matrix $P$ \\\hline
involution model & $\{Q \in \O(n) : Q^\tp = Q,\; \tr(Q) = 2k - n\}$ & $n \times n$ symmetric involution matrix $Q$ \\\hline
\end{tabu}
\bigskip
\caption{Matrix manifold models for the Grassmannian $\Gr(k,n)$.}
\label{tab:models}
\end{table}

We note that there are actually two homogeneous space models for $\Gr(k,n)$ in \eqref{eq:EAS}, one as a quotient of $\O(n)$ and the other as a quotient of $\V(k,n)$. While they are used somewhat interchangeably in \cite{EAS}, we distinguish them in Table~\ref{tab:models} as their change-of-coordinates maps to the involution model are different (see Section~\ref{sec:points}).

The name \emph{involution model} is warranted for \eqref{eq:LLY} because for any $Q \in \mathbb{R}^{n \times n}$, any two of the following conditions clearly imply the third:
\[
Q^\tp Q = I, \qquad Q^\tp = Q, \qquad Q^2 = I.
\]
Thus a symmetric orthogonal matrix may also be viewed as a symmetric involution or an orthogonal involution matrix. We will need the eigendecomposition of a matrix in the involution model for \emph{all} of our subsequent calculations; for easy reference we state this as a lemma. Such an eigendecomposition is trivial to compute, requiring only a single \textsc{qr} decomposition (of the matrix $\frac12(I + Q)$; see Lemma~\ref{lem:qr}).
\begin{lemma}\label{lem:1eig}
Let $k = 1,\dots,n$ and $Q \in \mathbb{R}^{n \times n}$ be such that
\[
Q^\tp Q = I, \qquad Q^\tp = Q,\qquad \tr(Q) = 2k - n.
\]
Then $Q$ has an eigenvalue decomposition
\[
Q = V I_{k,n-k} V^\tp = [y_1, \dots, y_k, z_1, \dots, z_{n-k}] 
\begin{bmatrix}
1 & & & & &\\
& \ddots & & & &\\
& & 1 & & &\\
& & & -1 & &\\
& & & & \ddots &\\
& & & & & -1
\end{bmatrix}
\begin{bmatrix*}[l]
y_1^\tp \\
\vdots\\
y_k^\tp\\
z_1^\tp\\
\vdots\\
z_{n-k}^\tp
\end{bmatrix*},
\]
where $V \in \O(n)$ and $I_{k,n-k} \coloneqq \diag(I_k,-I_{n-k}) = \diag(1,\dots,1,-1,\dots,-1)$.
\end{lemma}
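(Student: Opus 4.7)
The plan is to invoke the real spectral theorem, use the involution identity $Q^2 = I$ to restrict the eigenvalues to $\pm 1$, and then read off the multiplicities from the trace constraint.

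First I would note that $Q$ is a real symmetric matrix, so the spectral theorem supplies an orthogonal $V \in \O(n)$ and a real diagonal matrix $D$ with $Q = V D V^\tp$. Next, as pointed out in the paragraph preceding the lemma, the two hypotheses $Q^\tp Q = I$ and $Q^\tp = Q$ jointly imply $Q^2 = I$, so any eigenvalue $\lambda$ of $Q$ satisfies $\lambda^2 = 1$ and therefore lies in $\{+1, -1\}$. Consequently every diagonal entry of $D$ is $\pm 1$.

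Let $p$ be the number of $+1$ entries on the diagonal of $D$ and $q = n - p$ the number of $-1$ entries. Then
\[
2k - n \;=\; \tr(Q) \;=\; \tr(D) \;=\; p - q \;=\; 2p - n,
\]
which forces $p = k$ and $q = n - k$. To conclude, I would apply an appropriate permutation matrix $P \in \O(n)$ to the columns of $V$ (and correspondingly to the rows and columns of $D$); since $P$ is orthogonal, $VP$ remains orthogonal, and we may arrange the $+1$ eigenvalues before the $-1$ eigenvalues, so that $P^\tp D P = I_{k,n-k}$. Writing the first $k$ columns of $VP$ as $y_1,\dots,y_k$ and the last $n-k$ as $z_1,\dots,z_{n-k}$ yields the displayed factorization.

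The argument is essentially immediate; there is no real obstacle, only the minor bookkeeping of ordering the eigenvalues so that the $+1$'s come first. I would point out that this existence proof does not yet explain \emph{how} to compute $V$ stably in practice — that is the content of the subsequent Lemma~\ref{lem:qr}, which produces $V$ from a single \textsc{qr} decomposition of $\tfrac12(I + Q)$ — but no such computation is needed to establish the decomposition itself.
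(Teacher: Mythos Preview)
Your proof is correct and follows essentially the same approach as the paper's: invoke the spectral theorem for symmetric $Q$, use the involution $Q^2=I$ to force eigenvalues $\pm 1$, and read off the multiplicity $k$ of $+1$ from $\tr(Q)=2k-n$. You have simply made the eigenvalue-ordering permutation explicit, which the paper leaves implicit.
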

\begin{proof}
Existence of an eigendecomposition follows from the symmetry of $Q$. A symmetric involution has all eigenvalues  $\pm 1$ and the multiplicity of $1$  must be $k$ since  $\tr(Q) = 2k - n $. 
\end{proof}
Henceforth, for a matrix $Q$ in the involution model, we write
\begin{equation}\label{eq:1eig}
\begin{aligned}
Y_Q &\coloneqq [y_1,\dots,y_k] \in \V(k,n), \qquad
Z_Q \coloneqq [z_1,\dots,z_{n-k}] \in \V(n-k,n), \\
V_Q  &=  [Y_Q, Z_Q] = V \in \O(n)
\end{aligned}
\end{equation}
for its matrix of $1$-eigenvectors, its matrix of $-1$-eigenvectors, and its matrix of all eigenvectors respectively. While these matrices are not unique, the $1$-eigenspace and $-1$-eigenspace
\[
\im(Y_Q) = \spn \{y_1,\dots,y_k\} \in \Gr(k,n), \qquad \im(Z_Q) = \spn \{z_1,\dots,z_{n-k}\} \in \Gr(n-k,n)
\]
are uniquely determined by $Q$.

\section{Points and change-of-coordinates}\label{sec:points}

We begin by exhibiting a diffeomorphism to justify the involution model, showing that as \emph{smooth manifolds}, $\Gr(k,n)$ and $\{Q \in \O(n) : Q^\tp = Q,\; \tr(Q) = 2k - n\}$ are the same. In the next section, we will show that if we equip the latter with appropriate Riemannian metrics, then as \emph{Riemannian manifolds}, they are also the same, i.e., the diffeomorphism is an isometry. The practically minded may  simply take this as establishing a system of matrix coordinates for points on $\Gr(k,n)$.
\begin{proposition}[Points]\label{prop:points}
Let $k = 1,\dots,n$. Then the map
\begin{equation}
\begin{aligned}\label{pro:points:eqn:map}
\varphi : \Gr(k,n) &\to \{Q \in \O(n) : Q^\tp = Q,\; \tr(Q) = 2k - n\},\\
\varphi( \mathbb{W} ) &=   P_\mathbb{W} - P_{\mathbb{W}^\perp},
\end{aligned}
\end{equation} 
is a diffeomorphism with $\varphi^{-1} (Q) =\im(Y_Q)$ where $Y_Q \in \V(k,n)$ is as in \eqref{eq:1eig}.
\end{proposition}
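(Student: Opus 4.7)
My plan is to verify three standard items: (a) $\varphi$ lands in the prescribed set, (b) the displayed formula is a genuine two-sided inverse, and (c) both $\varphi$ and $\varphi^{-1}$ are smooth. The observation that makes everything transparent is the affine reformulation $\varphi(\mathbb{W}) = 2P_\mathbb{W} - I$, which follows from $P_\mathbb{W} + P_{\mathbb{W}^\perp} = I$.

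For (a), symmetry of $\varphi(\mathbb{W})$ is immediate from symmetry of $P_\mathbb{W}$; the identity $(2P_\mathbb{W}-I)^2 = 4P_\mathbb{W}^2 - 4P_\mathbb{W} + I = I$ uses only $P_\mathbb{W}^2 = P_\mathbb{W}$, and combined with symmetry this forces $\varphi(\mathbb{W}) \in \O(n)$; the trace equals $2\tr(P_\mathbb{W}) - n = 2k - n$. For (b), in one direction: given $Q$ in the target set, Lemma~\ref{lem:1eig} supplies the eigendecomposition $Q = V_Q I_{k,n-k} V_Q^\tp = Y_Q Y_Q^\tp - Z_Q Z_Q^\tp$, and since $Y_Q Y_Q^\tp$ and $Z_Q Z_Q^\tp$ are the projections onto $\im(Y_Q)$ and its orthogonal complement respectively, we read off $\varphi(\im(Y_Q)) = Q$. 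In the other direction: the $+1$-eigenspace of $2P_\mathbb{W} - I$ consists of those $v$ with $P_\mathbb{W} v = v$, which is precisely $\mathbb{W}$, so $\im(Y_{\varphi(\mathbb{W})}) = \mathbb{W}$.

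For smoothness of $\varphi$ itself, I would invoke the Stiefel model: if $\mathbb{W} = \im(V)$ for some $V \in \V(k,n)$, then $\varphi(\mathbb{W}) = 2VV^\tp - I$ is smooth and $\O(k)$-invariant in $V$, hence descends to a smooth map on $\Gr(k,n)$. For $\varphi^{-1}$, the clean trick is to set $P \coloneqq \tfrac{1}{2}(Q+I)$ and check that $P^2 = \tfrac{1}{4}(Q^2+2Q+I) = \tfrac{1}{2}(Q+I) = P$ with $P^\tp = P$ and $\tr(P) = k$; so $P$ is the trace-$k$ orthogonal projection onto $\im(Y_Q)$. Thus $\varphi^{-1}$ factors as the affine linear map $Q \mapsto \tfrac{1}{2}(Q+I)$ into the projection model, followed by the standard diffeomorphism $P \mapsto \im(P)$ from the projection model to $\Gr(k,n)$; both ingredients are smooth.

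There is no serious obstacle here; the proof is essentially bookkeeping. The only mild subtlety is smoothness of the inverse, and the cleanest resolution is the affine identification with the projection model above, which sidesteps any direct manipulation of spectral projectors and reduces the claim to a known diffeomorphism.
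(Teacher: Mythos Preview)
Your proposal is correct and largely parallels the paper's argument: both verify well-definedness via the rewriting $\varphi(\mathbb{W}) = 2P_\mathbb{W} - I$, establish the two-sided inverse in the same way, and obtain smoothness of $\varphi$ through its lift $V \mapsto 2VV^\tp - I$ on the Stiefel manifold. The one point of departure is how smoothness of $\varphi^{-1}$ is handled. You factor $\varphi^{-1}$ as the affine map $Q \mapsto \tfrac12(Q+I)$ into the projection model followed by the known diffeomorphism $P \mapsto \im(P)$, thereby borrowing the projection model \eqref{eq:HHT} as an established fact. The paper instead keeps the argument self-contained: it computes the differential $(d\varphi)_\mathbb{W}$ explicitly at the reference point $W = \begin{bsmallmatrix} I_k \\ 0 \end{bsmallmatrix}$, finds it to be the invertible linear map $\begin{bsmallmatrix} 0 \\ X \end{bsmallmatrix} \mapsto \begin{bsmallmatrix} 0 & X^\tp \\ X & 0 \end{bsmallmatrix}$, and concludes diffeomorphism from smooth bijection plus everywhere-invertible differential. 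Your route is shorter and conceptually clean provided one is willing to import the projection model; the paper's route is independent of the other models and has the side benefit of exhibiting the differential in a form that anticipates the tangent-space description in Proposition~\ref{prop:tangent}.
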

\begin{proof}
One can check that $Q =P_\mathbb{W} - P_{\mathbb{W}^\perp}$ is symmetric, orthogonal, and has trace $2k -n$. So the map $\varphi$ is well-defined. If we write $\psi(Q) = \im(Y_Q)$, then $\varphi(\psi(Q)) = Q$ and $\psi (\varphi(\mathbb{W})) = \mathbb{W}$, so $\psi = \varphi^{-1}$. To see that $\varphi$ is smooth, we may choose any local coordinates, say, represent $\mathbb{W} \in \Gr(k,n)$ in terms of any orthonormal basis $W = [w_1,\dots, w_k] \in \V(k,n)$ and observe that
\[
\varphi(\mathbb{W}) = 2 WW^\tp - I,  
\] 
which is smooth. With a linear change-of-coordinates, we may assume that 
\[
W = \begin{bmatrix}
I_k \\
0 
\end{bmatrix}.
\]
The differential $(d \varphi)_\mathbb{W}$ is given by the (clearly invertible) linear map 
\[
(d \varphi)_{{\mathbb{W}} } \left(
\begin{bmatrix}
0 \\
X
\end{bmatrix}  \right) =  \begin{bmatrix}
I_k \\
0 
\end{bmatrix} \begin{bmatrix}
0 & X^\tp
\end{bmatrix}  + \begin{bmatrix}
0 \\
X
\end{bmatrix}  \begin{bmatrix}
I_k & 0 
\end{bmatrix} = \begin{bmatrix}
0 & X^\tp \\
X & 0
\end{bmatrix}
\]
for all $ X\in \mathbb{R}^{(n-k) \times k}$. So  $\varphi$ is a diffeomorphism.
\end{proof}

Since the manifolds in Table~\ref{tab:models} are all diffeomorphic to $\Gr(k,n)$, they are diffeomorphic to each other. Our next results are not intended to establish that they are diffeomorphic but to construct these diffeomorphisms and their inverses explicitly, so that we may switch to and from the other systems of coordinates easily. 

In the next proposition, $\lb V \rb = \Bigl\{V \begin{bsmallmatrix}Q_1 & 0\\0 & Q_2 \end{bsmallmatrix} : Q_1 \in \O(k), \; Q_2 \in \O(n-k)\Bigr\}$ denotes equivalence class in $\O(n)/\bigl(\O(k) \times \O(n-k)\bigr)$.
\begin{proposition}[Change-of-coordinates I]\label{prop:G1}
Let $k = 1,\dots,n$. Then
\begin{align*}
\varphi_1 : \O(n)/\bigl(\O(k) \times \O(n-k)\bigr) &\to \{Q \in \O(n) : Q^\tp = Q,\; \tr(Q) = 2k - n\},\\
\varphi_1(\lb V \rb) &= V^\tp I_{k,n-k} V
\end{align*}
is a diffeomorphism with $\varphi_1^{-1}  (Q) =  \lb V_Q \rb$ with $V_Q\in \O(n)$ as in \eqref{eq:1eig}.
\end{proposition}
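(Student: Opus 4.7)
The plan is to realize $\varphi_1$ as a composition of two diffeomorphisms already in hand, avoiding any direct smoothness argument on eigenbases. Specifically, let $\Psi : \O(n)/\bigl(\O(k)\times\O(n-k)\bigr) \to \Gr(k,n)$ be the classical diffeomorphism sending $\lb V\rb$ to $\im(Y)$, where $Y \in \V(k,n)$ collects the first $k$ columns of $V$, and let $\varphi$ be the diffeomorphism from Proposition~\ref{prop:points}. Once I exhibit $\varphi_1 = \varphi \circ \Psi$, both the diffeomorphism claim and the inverse formula follow at once.

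First I would verify that $\varphi_1$ is well-defined on the quotient and has image in the target set. Writing $V = [Y,Z] \in \O(n)$ with $Y \in \V(k,n)$ and $Z \in \V(n-k,n)$, the block computation gives $V I_{k,n-k} V^\tp = YY^\tp - ZZ^\tp$. Right-multiplying $V$ by an element $\begin{bsmallmatrix} Q_1 & 0 \\ 0 & Q_2 \end{bsmallmatrix}$ of $\O(k)\times\O(n-k)$ sends $(Y,Z)$ to $(YQ_1, ZQ_2)$, and since $Q_iQ_i^\tp = I$, the quantity $YY^\tp - ZZ^\tp$ is unchanged, so $\varphi_1$ descends to the quotient. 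The image is symmetric by inspection, squares to $V I_{k,n-k}^2 V^\tp = I$, and has trace $\tr(I_{k,n-k}) = 2k-n$, so it lands in the involution model.

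Next I would identify $\varphi_1$ with $\varphi \circ \Psi$. By definition $\Psi(\lb V\rb) = \im(Y)$, and Proposition~\ref{prop:points} gives $\varphi(\im(Y)) = P_{\im(Y)} - P_{\im(Y)^\perp} = YY^\tp - ZZ^\tp$, which matches $\varphi_1(\lb V\rb)$. Since $\Psi$ is the standard diffeomorphism underlying the homogeneous-space realization \eqref{eq:EAS} (see e.g.~\cite{AMS,EAS}) and $\varphi$ is a diffeomorphism by Proposition~\ref{prop:points}, the composition $\varphi_1$ is a diffeomorphism.

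For the inverse formula, Lemma~\ref{lem:1eig} supplies the eigendecomposition $Q = V_Q I_{k,n-k} V_Q^\tp$, whence $\varphi_1(\lb V_Q\rb) = Q$ by the calculation above. The matrix $V_Q$ depends on a choice of orthonormal basis within each of the $(\pm 1)$-eigenspaces, but any two such choices are related precisely by right multiplication by an element of $\O(k)\times\O(n-k)$, so the class $\lb V_Q\rb$ is unambiguous; smoothness of $\varphi_1^{-1}$ is then automatic via $\varphi_1^{-1} = \Psi^{-1}\circ\varphi^{-1}$. I anticipate no real obstacle here: the only step that even mildly requires attention is the invariance check under the right $\O(k)\times\O(n-k)$-action, and that reduces to the one-line block computation above.
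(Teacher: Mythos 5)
Your argument is correct, but it takes a genuinely different route from the paper. The paper's proof never passes through the abstract Grassmannian: it first establishes the orbit characterization ($V_1 I_{k,n-k} V_1^\tp = V_2 I_{k,n-k} V_2^\tp$ if and only if $V_2 = V_1\operatorname{diag}(Q_1,Q_2)$ with $(Q_1,Q_2)\in\O(k)\times\O(n-k)$), which simultaneously gives well-definedness of $\varphi_1$ and of $\varphi_1^{-1}$ and shows they are mutually inverse, and then obtains smoothness of both directions by observing that $\varphi_1$ is induced by the surjective submersion $\widetilde{\varphi}_1:\O(n)\to\{Q\in\O(n):Q^\tp=Q,\ \tr(Q)=2k-n\}$ whose fibers are exactly the $\O(k)\times\O(n-k)$-orbits, so the induced map on the quotient is a diffeomorphism. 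You instead factor $\varphi_1=\varphi\circ\Psi$, outsourcing smoothness to Proposition~\ref{prop:points} and to the classical homogeneous-space identification $\Psi(\lb V\rb)=\im(Y)$; the identity $\varphi(\im(Y))=YY^\tp-ZZ^\tp=VI_{k,n-k}V^\tp$ then does all the work. What your route buys is brevity and no need to verify the submersion property of $\widetilde{\varphi}_1$; what it costs is self-containedness, since you must import the fact that $\Psi$ is a diffeomorphism (a standard fact, and one the paper's framing of \eqref{eq:EAS} implicitly licenses, so this is acceptable rather than a gap). Two small remarks: your closing paragraph's check that $\lb V_Q\rb$ is independent of the choice of eigenbasis is not strictly needed once $\varphi_1$ is known to be bijective --- exhibiting one $V_Q$ with $\varphi_1(\lb V_Q\rb)=Q$ suffices --- though it is the same ``iff'' the paper records; and note that you compute with $VI_{k,n-k}V^\tp$ rather than the $V^\tp I_{k,n-k}V$ appearing in the statement, which is the right call, since only the former is invariant under the right action defining $\lb V\rb$ and it is what the paper's own proof and inverse formula use.
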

\begin{proof}
Note that $Q = V_1 I_{k,n-k} V_1^\tp = V_2 I_{k,n-k} V_2^\tp$ iff
\[
V_2 = V_1 \begin{bmatrix}
Q_1 & 0 \\
0 & Q_2 
\end{bmatrix}
\]
for some $(Q_1,Q_2)\in \O(k) \times \O(n-k)$ iff $\lb V_1 \rb = \lb V_2 \rb$. Hence both $\varphi_1$ and $\varphi_1^{-1}$ are well-defined and are inverses of each other. Observe that $\varphi_1$ is induced from the map 
\[
\widetilde{\varphi}_1: \O(n) \to  \{Q \in \O(n) : Q^\tp = Q,\; \tr(Q) = 2k - n\},\quad \widetilde{\varphi}_1 (V) = V^\tp I_{k,n-k} V,
\]
which is a surjective submersion. The proof that $\varphi_1^{-1}$ is well-defined shows that the fibers of $\widetilde{\varphi}_1$ are exactly the $\O(k) \times \O(n-k)$-orbits in $\O(n)$. Hence  $\varphi_1$, as the composition of $\widetilde{\varphi}_1$ and the quotient map $\O(n) \to \O(n) / \bigl(\O(k) \times \O(n-k)\bigr)$, is a diffeomorphism.
\end{proof}

The next result explains the resemblance between the projection and involution models --- each is a scaled and translated copy of the other. The scaling and translation are judiciously chosen so that orthogonal projections become symmetric involutions, and this seemingly innocuous difference will have a significant impact on the numerical stability of Grassmannian optimization algorithms.
\begin{proposition}[Change-of-coordinates II]\label{prop:G2}
Let $k =1,\dots, n$. Then
\begin{align*}
\varphi_2 : \{P \in \mathbb{R}^{n \times n} : P^\tp = P = P^2,\; \tr(P) = k\} &\to \{Q \in \O(n) : Q^\tp = Q,\; \tr(Q) = 2k - n\},\\
\varphi_2(P) &= 2P - I
\end{align*}
is a diffeomorphism with $\varphi_2^{-1}  (Q) = \frac{1}{2}(I+Q)$.
\end{proposition}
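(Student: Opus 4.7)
The plan is to carry out a direct, elementary verification: show that $\varphi_2$ maps into the target set, that the proposed $\varphi_2^{-1}$ maps back into the source set, check that the two are mutual inverses on the nose, and invoke smoothness for free since both maps are affine in the matrix entries.

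First I would verify that $\varphi_2$ is well-defined. Given $P$ with $P^\tp=P=P^2$ and $\tr(P)=k$, I would compute $(2P-I)^\tp = 2P-I$ (symmetry), $(2P-I)^2 = 4P^2 - 4P + I = I$ (so $Q\coloneqq 2P-I \in \O(n)$), and $\tr(2P-I) = 2k - n$. Next I would verify that $\varphi_2^{-1}$ as defined lands in the projection set: for $Q$ symmetric with $Q^\tp Q = I$ we have $Q^2 = I$, hence $\bigl(\tfrac12(I+Q)\bigr)^2 = \tfrac14(I + 2Q + Q^2) = \tfrac12(I+Q)$, symmetry is immediate, and $\tr\bigl(\tfrac12(I+Q)\bigr) = \tfrac12(n + 2k - n) = k$.

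Then I would check the two composition identities, which are one-line algebraic computations: $\varphi_2\bigl(\tfrac12(I+Q)\bigr) = (I+Q) - I = Q$ and $\varphi_2^{-1}(2P-I) = \tfrac12(I + 2P - I) = P$. Finally, both $\varphi_2$ and $\varphi_2^{-1}$ are restrictions of affine maps on $\mathbb{R}^{n\times n}$, hence smooth, so $\varphi_2$ is a diffeomorphism.

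There is no real obstacle here; the entire content of the proposition is that the well-known bijection between projections and symmetric involutions, $P\leftrightarrow 2P-I$, respects the trace constraints $\tr(P)=k$ and $\tr(Q)=2k-n$. The only thing worth flagging explicitly in the write-up is the use of $Q^\tp Q = I$ together with $Q^\tp = Q$ to conclude $Q^2 = I$, which is exactly the observation highlighted just before Lemma~\ref{lem:1eig}.
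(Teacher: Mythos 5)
Your proof is correct and is essentially the paper's argument, just with the routine verifications written out in full: the paper simply observes that $2P-I = P - P^\perp$ (so well-definedness follows from the computation already done for $\varphi$ in Proposition~\ref{prop:points}) and declares the maps to be mutually inverse diffeomorphisms, whereas you check the constraints, the two compositions, and smoothness (via affineness) explicitly. Nothing is missing; your write-up is just a more self-contained version of the same elementary verification.
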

\begin{proof}
Note that $2P - I = P - P^\perp$ where $P^\perp$ is the projection onto the orthogonal complement of $\im (P)$, so both $\varphi_2$ and $\varphi_2^{-1}$ are well-defined. They are clearly diffeomorphisms and are inverses to each other.
\end{proof}

In the next proposition, $\lb Y \rb = \{YQ : Q \in \O(k) \}$ denotes equivalence class in $\V(k,n)/\O(k)$.
\begin{proposition}[Change-of-coordinates III]\label{prop:G3}
Let $k = 1,\dots,n$. Then
\begin{align*}
\varphi_3 :  \V(k,n)/\O(k) &\to \{Q \in \O(n) : Q^\tp = Q,\; \tr(Q) = 2k - n\},\\
\varphi_3(\lb Y \rb) &= 2 Y Y^\tp - I
\end{align*}
is a diffeomorphism with $\varphi_3^{-1}  (Q) = \lb Y_Q \rb$ with $Y_Q \in \V(k,n)$ as in \eqref{eq:1eig}.
\end{proposition}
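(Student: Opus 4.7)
My plan is to mimic the structure of the proofs of Propositions~\ref{prop:G1} and~\ref{prop:G2}: first verify that $\varphi_3$ and its proposed inverse are well-defined on equivalence classes, then check they are mutual inverses, and finally upgrade the bijection to a diffeomorphism by lifting to $\V(k,n)$ and invoking the submersion/quotient argument used for $\varphi_1$.

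First I would check that $\varphi_3$ lands in the right target and is independent of the chosen representative. For $Y \in \V(k,n)$, the matrix $M \coloneqq 2YY^\tp - I$ is clearly symmetric; using $Y^\tp Y = I_k$, we get $M^2 = 4YY^\tp YY^\tp - 4YY^\tp + I = 4YY^\tp - 4YY^\tp + I = I$, so $M \in \O(n)$, and $\tr(M) = 2\tr(Y^\tp Y) - n = 2k - n$. Independence of representative is immediate since $(YQ)(YQ)^\tp = YQQ^\tp Y^\tp = YY^\tp$ for $Q \in \O(k)$. Conversely, $\varphi_3^{-1}$ is well-defined because Lemma~\ref{lem:1eig} produces $Y_Q$ uniquely up to the right action of $\O(k)$ on its columns (any two orthonormal bases of the $1$-eigenspace differ by an element of $\O(k)$), so $\lb Y_Q \rb \in \V(k,n)/\O(k)$ is unambiguous.

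Next I would verify the two maps are inverses. One direction is Proposition~\ref{prop:points}: for any representative $Y$, the matrix $2YY^\tp - I$ equals $P_{\im(Y)} - P_{\im(Y)^\perp}$, whose $1$-eigenspace is $\im(Y)$, so $\varphi_3^{-1}(\varphi_3(\lb Y \rb)) = \lb Y \rb$. Conversely, given $Q$ in the involution model, $Y_Q$ is by definition an orthonormal basis of the $1$-eigenspace, so $2 Y_Q Y_Q^\tp = 2 P_{\im(Y_Q)}$ and hence $\varphi_3(\lb Y_Q \rb) = 2P_{\im(Y_Q)} - I = Q$ by the eigendecomposition in Lemma~\ref{lem:1eig}.

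Finally, to promote this bijection to a diffeomorphism I would factor $\varphi_3$ through the smooth lift
\[
\widetilde{\varphi}_3 : \V(k,n) \to \{Q \in \O(n) : Q^\tp = Q,\; \tr(Q) = 2k-n\}, \qquad \widetilde{\varphi}_3(Y) = 2YY^\tp - I.
\]
This map is polynomial, hence smooth. The fibers of $\widetilde{\varphi}_3$ are precisely the $\O(k)$-orbits: $2Y_1 Y_1^\tp = 2 Y_2 Y_2^\tp$ forces $\im(Y_1) = \im(Y_2)$, so $Y_1 = Y_2 Q$ for some $Q \in \O(k)$. To see $\widetilde{\varphi}_3$ is a submersion at a reference point $Y_0 = \begin{bsmallmatrix} I_k \\ 0 \end{bsmallmatrix}$, a direct computation of the differential on tangent vectors $\dot Y = \begin{bsmallmatrix} A \\ X \end{bsmallmatrix}$ (with $A$ skew-symmetric, $X$ arbitrary) yields
\[
(d\widetilde{\varphi}_3)_{Y_0}(\dot Y) = 2(Y_0 \dot Y^\tp + \dot Y Y_0^\tp) = 2\begin{bmatrix} A + A^\tp & X^\tp \\ X & 0 \end{bmatrix} = 2\begin{bmatrix} 0 & X^\tp \\ X & 0 \end{bmatrix},
\]
which surjects onto the tangent space of the involution manifold at $\varphi_3(\lb Y_0 \rb) = I_{k,n-k}$ (whose tangent vectors, as in the proof of Proposition~\ref{prop:points}, are exactly these off-diagonal blocks). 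By $\O(n)$-equivariance, $\widetilde{\varphi}_3$ is a surjective submersion everywhere. Therefore $\widetilde{\varphi}_3$ descends through the quotient map $\V(k,n) \to \V(k,n)/\O(k)$ to a diffeomorphism $\varphi_3$, completing the proof.

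The only step requiring real care is the submersion calculation, but since it reduces to the same block-off-diagonal identification appearing in Proposition~\ref{prop:points}, no new ideas are needed; the rest is bookkeeping.
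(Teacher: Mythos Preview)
Your proof is correct and follows essentially the same approach as the paper: well-definedness, mutual inverses, then a differential/submersion computation to upgrade the bijection to a diffeomorphism. The only cosmetic difference is that the paper establishes well-definedness by invoking Proposition~\ref{prop:G2} (since $YY^\tp$ is a projection) and handles the diffeomorphism by pointing back to the differential computation in Proposition~\ref{prop:points}, whereas you verify well-definedness by direct computation and argue the diffeomorphism via the surjective-submersion route of Proposition~\ref{prop:G1}; both routes are equivalent and rely on the same block off-diagonal identification of the tangent space.
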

\begin{proof}
Given $\lb Y \rb \in \V(k,n)/\O(k)$, the matrix $Y Y^\tp$ is the projection matrix onto the $k$-dimensional subspace $\im(Y) \in \Gr(k,n)$. Hence $\varphi_3$ is a well-defined map by Proposition~\ref{prop:G2}. To show that its inverse is given by $\psi_3(Q) = \lb Y_Q \rb$, observe that any $Y\in \V(k,n)$ can be extended to a full orthogonal matrix $V \coloneqq [Y, Y^\perp]\in \O(n)$ and we have
\[
V^\tp Y = \begin{bmatrix}
I_k \\
0
\end{bmatrix},\qquad
Q = 2YY^\tp - I = V \begin{bmatrix}
2 I_k & 0 \\
0 & 0
\end{bmatrix} V^\tp - I = V I_{k,n-k} V^\tp.
\]
This implies that $\psi_3 \circ \varphi_3 (\lb Y \rb) = \lb Y_Q \rb = \lb Y \rb$. That $\varphi_3$ is a diffeomorphism follows from the same argument in the proof of Proposition~\ref{prop:points}.
\end{proof}

In the next proposition, $\lb A \rb = \{AX : X \in \GL(k) \}$ denotes equivalence class in $\mathbb{R}^{n \times k}_k/\GL(k)$. Also, we write $A = Y_A R_A$ for the \textsc{qr} factorization of $A \in \mathbb{R}^{n \times k}_k$, i.e., $Y_A\in \V(k,n)$ and $R_A \in \mathbb{R}^{k\times k}$ is upper triangular.
\begin{proposition}[Change-of-coordinates IV]\label{prop:G4}
Let $k = 1,\dots,n$. Then
\begin{align*}
\varphi_4 : \mathbb{R}^{n \times k}_k/\GL(k) &\to \{Q \in \O(n) : Q^\tp = Q,\; \tr(Q) = 2k - n\},\\
\varphi_4(\lb A \rb) &= 2Y_A Y_A^\tp-I
\end{align*}
is a diffeomorphism with $\varphi_4^{-1}  (Q) = \lb Y_Q \rb$ with $Y_Q$ is as in \eqref{eq:1eig}.
\end{proposition}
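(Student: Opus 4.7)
The plan is to reduce this to Proposition~\ref{prop:G3} by factoring $\varphi_4$ as $\varphi_3 \circ \pi$, where
\[
\pi : \mathbb{R}^{n\times k}_k / \GL(k) \to \V(k,n)/\O(k), \qquad \lb A \rb \mapsto \lb Y_A \rb,
\]
is obtained from any QR decomposition $A = Y_A R_A$. The identity $Y_A Y_A^\tp = A(A^\tp A)^{-1} A^\tp$ (both sides equal the orthogonal projector onto $\im(A)$) then yields the explicit lift $\widetilde{\varphi}_4(A) = 2A(A^\tp A)^{-1}A^\tp - I$, which is visibly symmetric, squares to $I$, and has trace $2k - n$, so $\varphi_4$ takes values where claimed.

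First I would verify that $\pi$ is well-defined. Two QR factorizations of the same $A$ differ by right-multiplication of the orthonormal factor by a matrix in $\O(k)$, so $\lb Y_A \rb$ does not depend on the chosen decomposition; and replacing $A$ by $AX$ with $X \in \GL(k)$ and QR-factoring $R_A X = Q_X R'$ gives $AX = (Y_A Q_X) R'$, so $Y_{AX} \in \lb Y_A \rb$. An inverse to $\pi$ is induced by the inclusion $\V(k,n) \hookrightarrow \mathbb{R}^{n\times k}_k$, which descends to the quotients precisely because $\O(k) \subset \GL(k)$; the two composites are the identity by inspection. For smoothness, I would lift $\pi$ to the smooth polar-normalization $A \mapsto A(A^\tp A)^{-1/2}$ on $\mathbb{R}^{n\times k}_k$, which lands in $\V(k,n)$ and has the same column space as $A$ (hence represents the same $\O(k)$-class as $Y_A$); the inverse $\pi^{-1}$ lifts to the smooth inclusion. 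Thus $\pi$ is a diffeomorphism.

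Composing with $\varphi_3$ from Proposition~\ref{prop:G3} now gives $\varphi_4 = \varphi_3 \circ \pi$ as a diffeomorphism with
\[
\varphi_4^{-1}(Q) = \pi^{-1}\bigl(\varphi_3^{-1}(Q)\bigr) = \pi^{-1}(\lb Y_Q \rb) = \lb Y_Q \rb,
\]
which is exactly the stated formula. The main obstacle I anticipate is a careful bookkeeping of the quotients: one must confirm that the $\GL(k)$-orbits in $\mathbb{R}^{n\times k}_k$ correspond bijectively under the polar lift to the $\O(k)$-orbits in $\V(k,n)$, both parametrising the same $k$-dimensional subspaces. This rests only on the elementary fact that two full-rank (resp.\ orthonormal) matrices have the same column space iff they differ by right-multiplication by an element of $\GL(k)$ (resp.\ $\O(k)$), and requires no new geometric input beyond Proposition~\ref{prop:G3}.
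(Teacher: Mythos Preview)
Your proposal is correct and follows essentially the same route as the paper: both reduce to Proposition~\ref{prop:G3} via the diffeomorphism $\mathbb{R}^{n\times k}_k/\GL(k)\cong \V(k,n)/\O(k)$ induced by the inclusion $\V(k,n)\hookrightarrow\mathbb{R}^{n\times k}_k$. You are simply more explicit than the paper in verifying well-definedness and smoothness (via the polar normalization $A\mapsto A(A^\tp A)^{-1/2}$), whereas the paper asserts the diffeomorphism in one line.
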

\begin{proof}
First observe that $\V(k,n) \subseteq \mathbb{R}^{n \times k}_k$ and the inclusion map $\V(k,n) \hookrightarrow \mathbb{R}^{n \times k}_k$ induces a diffeomorphism $ \V(k,n)/\O(k) \cong \mathbb{R}^{n \times k}_k/\GL(k)$ --- if we identify them, then $\varphi_4^{-1}$ becomes $\varphi_3^{-1}$ in Proposition~\ref{prop:G3} and is thus a diffeomorphism. It follows that $\varphi_4$ is a diffeomorphism. That the maps are inverses to each other follows from the same argument in the proof of Proposition~\ref{prop:G3}.
\end{proof}

The maps $\varphi,\varphi_1,\varphi_2,\varphi_3,\varphi_4$ allow one to transform an optimization problem formulated in terms of abstract $k$-dimensional subspaces or in terms of one of the first four models in Table~\ref{tab:models} into a mathematically (but not computationally) equivalent problem in terms of the involution model. Note that these are change-of-coordinate maps for \emph{points} --- they are good for translating expressions that involve only points on $\Gr(k,n)$. In particular, one cannot simply apply these maps to the analytic expressions for other differential geometric objects and operations in \cite{AMS, EAS, HHT} and obtain corresponding expressions for the involution model. Deriving these requires considerable effort and would take up the next three sections. 

Henceforth we will identify the Grassmannian with the involution model:
\[
\Gr(k,n) \coloneqq  \{Q \in \O(n) : Q^\tp = Q,\; \tr(Q) = 2k - n\},
\]
i.e., in the rest of our article, points on $\Gr(k,n)$ are symmetric orthogonal matrices of trace $2k-n$. With this, the well-known isomorphism
\begin{equation}\label{eq:iso}
\Gr(k,n)  \cong \Gr(n-k,n),
\end{equation}
which we will need later, is simply given by the map $Q \mapsto -Q$.

\section{Metric, tangents, and normals}\label{sec:tangent}

The simple observation in Lemma~\ref{lem:1eig} implies that a neighborhood of any point $Q \in \Gr(k,n)$ is just like a neighborhood of the special point $I_{k,n-k} = \diag(I_k,-I_{n-k}) \in \Gr(k,n)$. Consequently, objects like tangent spaces and curves at $Q$ can be determined by simply determining them at $I_{k,n-k}$. Although $\Gr(k,n)$ is not a Lie group, the involution model, which models it as a linear section of $\O(n)$, allows certain characteristics of a Lie group to be retained. Here $I_{k,n-k}$ has a role similar to that of the identity element in a Lie group.

We will provide three different expressions for vectors in the \emph{tangent space} $\T_Q \Gr(k,n)$ at a point $Q\in \Gr(k,n)$: an implicit form \eqref{eq:tan1} as traceless symmetric matrices that anticommutes with $Q$ and two explicit forms \eqref{eq:tan2}, \eqref{eq:tan3} parameterized by $k \times (n-k)$ matrices. Recall from Lemma~\ref{lem:1eig} that any  $Q  \in \Gr(k,n)$ has an eigendecomposition of the form $Q = V I_{k,n-k} V^\tp$ for some $V\in \O(n)$.
\begin{proposition}[Tangent space I]\label{prop:tangent}
Let  $Q  \in \Gr(k,n)$ with eigendecomposition $Q = V I_{k,n-k} V^\tp$.
The tangent space of $\Gr(k,n)$ at $Q$ is given by
\begin{align}
\T_Q \Gr(k,n) &= \left\lbrace X\in \mathbb{R}^{n\times n}: X^\tp = X,\; X Q +QX = 0,\; \tr(X) = 0 \right\rbrace \label{eq:tan1}
 \\
& = \left\lbrace
V \begin{bmatrix}
0 & B \\
B^\tp & 0
\end{bmatrix}V^\tp  \in \mathbb{R}^{n \times n} : B \in \mathbb{R}^{k\times (n-k)}
\right\rbrace \label{eq:tan2} \\
& = \left\lbrace 
QV \begin{bmatrix}
0 & B \\
-B^\tp & 0 
\end{bmatrix} V^\tp  \in \mathbb{R}^{n \times n}: B\in \mathbb{R}^{k \times (n-k)} 
\right\rbrace. \label{eq:tan3}
\end{align}
\end{proposition}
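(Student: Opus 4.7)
My plan is to characterize $\T_Q\Gr(k,n)$ implicitly by differentiating the defining equations, and then to use the eigendecomposition $Q=VI_{k,n-k}V^\tp$ from Lemma~\ref{lem:1eig} to translate this implicit form into the explicit block parametrizations. For the implicit part, I would take any smooth curve $Q(t)\subset\Gr(k,n)$ with $Q(0)=Q$ and set $X:=\dot Q(0)$. Differentiating $Q(t)^\tp=Q(t)$ at $t=0$ gives $X^\tp=X$; differentiating $Q(t)^2=I$ (which holds because symmetric orthogonal matrices are involutions, as noted just before Lemma~\ref{lem:1eig}) gives $XQ+QX=0$. The trace condition in \eqref{eq:tan1} is actually redundant: from $XQ=-QX$ and $Q^2=I$ we obtain $X=XQ^2=-QXQ$, hence $\tr(X)=-\tr(X)=0$. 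Thus $\T_Q\Gr(k,n)$ is contained in the right-hand side of \eqref{eq:tan1}.

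To upgrade this containment to equality and simultaneously obtain \eqref{eq:tan2}, I would write an arbitrary symmetric matrix as $X=VSV^\tp$ and block $S=\begin{bsmallmatrix}S_{11}&S_{12}\\S_{12}^\tp&S_{22}\end{bsmallmatrix}$ compatibly with $I_{k,n-k}=\diag(I_k,-I_{n-k})$. The anticommutation $SI_{k,n-k}+I_{k,n-k}S=0$ then forces $S_{11}=0$ and $S_{22}=0$, leaving $B:=S_{12}\in\mathbb{R}^{k\times(n-k)}$ free. This proves the equality \eqref{eq:tan1}=\eqref{eq:tan2} as subsets of $\mathbb{R}^{n\times n}$, and since the set \eqref{eq:tan2} is a $k(n-k)$-dimensional linear space matching $\dim\Gr(k,n)$ (which is established by Proposition~\ref{prop:points}), the earlier inclusion $\T_Q\Gr(k,n)\subseteq$~\eqref{eq:tan1} must be an equality. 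Alternatively, and more concretely, every element of \eqref{eq:tan2} is realized as $\dot Q(0)$ by the curve $Q(t)=V\exp(tW)I_{k,n-k}\exp(-tW)V^\tp$ with $W=\begin{bsmallmatrix}0&B\\-B^\tp&0\end{bsmallmatrix}$ skew-symmetric; a short commutator calculation gives $\dot Q(0)=V[W,I_{k,n-k}]V^\tp=-2V\begin{bsmallmatrix}0&B\\B^\tp&0\end{bsmallmatrix}V^\tp$, which sidesteps any dimension-counting.

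The remaining equality \eqref{eq:tan2}=\eqref{eq:tan3} is then a one-line identity using $QV=VI_{k,n-k}$:
\[
QV\begin{bsmallmatrix}0&B\\-B^\tp&0\end{bsmallmatrix}V^\tp=VI_{k,n-k}\begin{bsmallmatrix}0&B\\-B^\tp&0\end{bsmallmatrix}V^\tp=V\begin{bsmallmatrix}0&B\\B^\tp&0\end{bsmallmatrix}V^\tp,
\]
so multiplying inside by the diagonal block $I_{k,n-k}$ converts the skew-symmetric off-diagonal pattern of \eqref{eq:tan3} into the symmetric off-diagonal pattern of \eqref{eq:tan2} and vice versa. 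I do not anticipate any substantive obstacle here; the only points worth flagging are the redundancy of the trace condition and the $2\times 2$ block calculation, both of which are routine once one observes that the anticommutation relation $XQ+QX=0$ precisely kills the diagonal blocks of a symmetric matrix in the eigenbasis of $Q$.
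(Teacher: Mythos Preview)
Your argument is correct and follows essentially the same route as the paper's own proof: differentiate the defining constraints to obtain \eqref{eq:tan1}, then pass to the eigenbasis and block-decompose to force the diagonal blocks to vanish, yielding \eqref{eq:tan2}, and finally use $QV=VI_{k,n-k}$ to convert between \eqref{eq:tan2} and \eqref{eq:tan3}. Your treatment is in fact slightly more complete than the paper's, since you explicitly justify why the containment $\T_Q\Gr(k,n)\subseteq$~\eqref{eq:tan1} is an equality (via dimension count or the explicit curve $t\mapsto V\exp(tW)I_{k,n-k}\exp(-tW)V^\tp$) and you observe that the trace condition in \eqref{eq:tan1} is redundant.
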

\begin{proof}
By definition, a curve $\gamma$ in $\Gr(k,n)$ passing through $Q$ satisfies
\[
\gamma(t)^\tp - \gamma(t) = 0,\quad  \gamma(t)^\tp \gamma(t) = I_n, \quad \tr(\gamma(t)) = 2k - n,\quad t\in (-\varepsilon, \varepsilon),
\]
together with the initial condition $\gamma(0) = Q$. Differentiating these equations at $t = 0$, we get
\[
\dot{\gamma}(0)^\tp -  \dot{\gamma}(0) = 0, \quad \dot{\gamma}(0)^\tp Q + Q^\tp \dot{\gamma}(0) = 0, \quad \tr(\dot{\gamma}(0)) = 0,
\]
from which \eqref{eq:tan1} follows. Now take $X\in \T_Q \Gr(k,n)$. By \eqref{eq:tan1}, $ V^\tp X V I_{k,n-k}=V^\tp (X Q)V $ is skew-symmetric and $V^\tp X V$ is symmetric. Partition
\[
V^\tp X V = \begin{bmatrix}
A & B \\
B^\tp & C
\end{bmatrix},\qquad A \in \mathbb{R}^{k \times k},\; B \in \mathbb{R}^{k\times (n-k)},\; C \in \mathbb{R}^{(n-k) \times (n-k)}.
\]
Note that $A$ and $C$ are symmetric matrices since $X$ is. So if
\[
 V^\tp X V I_{k,n-k}=\begin{bmatrix}
A & B \\
B^\tp & C
\end{bmatrix}\begin{bmatrix}
I & 0 \\
0 & -I
\end{bmatrix}=\begin{bmatrix}
A & -B \\
B^\tp & -C
\end{bmatrix}
\]
is skew-symmetric, then we must have $A =0$ and $C = 0$ and we obtain  \eqref{eq:tan2}. Since $Q = V I_{k,n-k} V^\tp$ and $Q = Q^\tp$, \eqref{eq:tan3} follows from \eqref{eq:tan2} by writing $V = Q V I_{k,n-k}$.
\end{proof}
The implicit form in \eqref{eq:tan1} is inconvenient in algorithms. Of the two explicit forms \eqref{eq:tan2} and \eqref{eq:tan3}, the description in \eqref{eq:tan2} is evidently more economical, involving only $V$, as opposed to both $Q$ and $V$ as in \eqref{eq:tan3}. Henceforth, \eqref{eq:tan2} will be our preferred choice and we will assume that a tangent vector at $Q \in \Gr(k,n)$ always takes the form
\begin{equation}\label{eq:tanvec}
X = 
V \begin{bmatrix}
0 & B \\
B^\tp & 0
\end{bmatrix}V^\tp,
\end{equation}
for some $B \in \mathbb{R}^{k \times (n-k)}$.
This description appears to depend on the eigenbasis $V$, which is not unique, as $Q$ has many repeated eigenvalues. The next proposition, which relates two representations of the same tangent vector with respect to two different $V$'s, guarantees that the tangent space obtained will nonetheless be the same regardless of the choice of $V$.
\begin{proposition}[Tangent vectors]\label{prop:transtan}
If $V_1 I_{k,n-k} V_1^\tp = Q= V_2 I_{k,n-k} V_2^\tp$, then any $X \in \T_Q \Gr(k,n)$ can be written as
\[
X = V_2 \begin{bmatrix}
0 & B \\
B^\tp & 0
\end{bmatrix} V_2^\tp =V_1   \begin{bmatrix}
0 & Q_1 B Q_2^\tp \\
Q_2 B^\tp Q_1^\tp & 0
\end{bmatrix} V_1^\tp,
\]
for some $Q_1 \in \O(k)$ and $Q_2\in \O(n-k)$ such that
\begin{equation}\label{eq:transtan}
V_2 = V_1 \begin{bmatrix}
Q_1 & 0 \\
0 & Q_2
\end{bmatrix}.
\end{equation} 
\end{proposition}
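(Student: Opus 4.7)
My plan is to reduce the statement to a routine block-matrix calculation by first pinning down the precise redundancy in the eigendecomposition $Q = V I_{k,n-k} V^\tp$, and then substituting.

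First I would invoke the characterization (already established in the proof of Proposition~\ref{prop:G1}) that $V_1 I_{k,n-k} V_1^\tp = V_2 I_{k,n-k} V_2^\tp$ holds if and only if $V_2 = V_1 \begin{bsmallmatrix} Q_1 & 0 \\ 0 & Q_2 \end{bsmallmatrix}$ for some $(Q_1, Q_2) \in \O(k) \times \O(n-k)$. This is exactly the form \eqref{eq:transtan}, so the existence of $Q_1, Q_2$ is immediate. The mechanism is the same as in Proposition~\ref{prop:G1}: if $W = V_1^\tp V_2 \in \O(n)$, then $W I_{k,n-k} = I_{k,n-k} W$ forces $W$ to commute with $I_{k,n-k}$, and the commutant of $I_{k,n-k}$ in $\O(n)$ consists precisely of block-diagonal orthogonal matrices.

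Next, by Proposition~\ref{prop:tangent}, the tangent vector $X \in \T_Q \Gr(k,n)$ can be written in the form
\[
X = V_2 \begin{bmatrix} 0 & B \\ B^\tp & 0 \end{bmatrix} V_2^\tp
\]
for some $B \in \mathbb{R}^{k \times (n-k)}$. I would then substitute $V_2 = V_1 \begin{bsmallmatrix} Q_1 & 0 \\ 0 & Q_2 \end{bsmallmatrix}$ and carry out the block multiplication, using $\begin{bsmallmatrix} Q_1 & 0 \\ 0 & Q_2 \end{bsmallmatrix}^\tp = \begin{bsmallmatrix} Q_1^\tp & 0 \\ 0 & Q_2^\tp \end{bsmallmatrix}$. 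The product
\[
\begin{bmatrix} Q_1 & 0 \\ 0 & Q_2 \end{bmatrix} \begin{bmatrix} 0 & B \\ B^\tp & 0 \end{bmatrix} \begin{bmatrix} Q_1^\tp & 0 \\ 0 & Q_2^\tp \end{bmatrix} = \begin{bmatrix} 0 & Q_1 B Q_2^\tp \\ Q_2 B^\tp Q_1^\tp & 0 \end{bmatrix}
\]
is a direct calculation, and together with conjugation by $V_1$ on the outside this yields the second expression for $X$ in the statement.

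There is no real obstacle here; the content of the proposition is essentially a compatibility statement, and the only subtle point is justifying step one, namely that any two $V_1, V_2 \in \O(n)$ giving the same $Q$ differ by a block-diagonal factor. That argument has already been made inside Proposition~\ref{prop:G1}, so I would simply cite it (or reprove it in one line by observing that $V_1^\tp V_2$ is orthogonal and commutes with $I_{k,n-k}$, whose $\pm 1$ eigenspaces are the first $k$ and last $n-k$ coordinate subspaces). Everything else is mechanical matrix multiplication.
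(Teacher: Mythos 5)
Your proposal is correct and follows essentially the same route as the paper: the paper's proof is a one-line appeal to the very fact you establish in your first step (that $V_1$ and $V_2$ give the same $Q$ iff they differ by a block-diagonal factor in $\O(k)\times\O(n-k)$), with the block multiplication left implicit. Your explicit verification of that fact via the commutant of $I_{k,n-k}$ and the subsequent computation are both sound.
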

\begin{proof}
This is a consequence of the fact that  $V_1 I_{k,n-k} V_1^\tp = Q= V_2 I_{k,n-k} V_2^\tp$ iff there exist $Q_1 \in \O(k)$ and $Q_2\in \O(n-k)$ such that \eqref{eq:transtan} holds.
\end{proof}

Another consequence of using \eqref{eq:tan2} is that the tangent space at any point $Q$ is a copy of the tangent space at $I_{k,n-k}$, conjugated by any eigenbasis $V$ of $Q$; by Proposition~\ref{prop:transtan}, this is independent of the choice of $V$.
\begin{corollary}[Tangent space II]\label{cor:tangent space}
The tangent space at $I_{k,n-k}$ is
\[
\T_{I_{k,n-k}} \Gr(k,n) = \left\lbrace
 \begin{bmatrix}
0 & B \\
B^\tp & 0
\end{bmatrix}:B \in \mathbb{R}^{k\times {(n-k)}}
\right\rbrace.
\]
For any $Q \in \Gr(k,n)$ with eigendecomposition $Q = V I_{k,n-k} V^\tp$,
\[
\T_Q \Gr(k,n) = V \bigl( \T_{I_{k,n-k}} \Gr(k,n) \bigr) V^\tp.
\]
\end{corollary}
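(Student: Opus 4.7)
The plan is to apply the explicit parametrization \eqref{eq:tan2} from Proposition~\ref{prop:tangent} twice: once at the distinguished point $I_{k,n-k}$ to get the first formula, and once at a general $Q$ to get the second. No new geometry is needed; the corollary is essentially a repackaging.

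First I would specialize Proposition~\ref{prop:tangent} to $Q = I_{k,n-k}$. Since $I_{k,n-k} = I_n \cdot I_{k,n-k} \cdot I_n^\tp$ is itself an eigendecomposition with $V = I_n$, substituting $V = I_n$ into \eqref{eq:tan2} immediately produces the claimed description
\[
\T_{I_{k,n-k}} \Gr(k,n) = \biggl\{ \begin{bmatrix} 0 & B \\ B^\tp & 0 \end{bmatrix} : B \in \mathbb{R}^{k \times (n-k)} \biggr\}.
\]

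For the second assertion, I would let $Q = V I_{k,n-k} V^\tp$ be any eigendecomposition and read \eqref{eq:tan2} as saying that every $X \in \T_Q \Gr(k,n)$ has the form $X = V M V^\tp$ with $M = \begin{bsmallmatrix} 0 & B \\ B^\tp & 0 \end{bsmallmatrix}$. By the first part, $M$ is precisely a general element of $\T_{I_{k,n-k}} \Gr(k,n)$, which gives the inclusion $\T_Q \Gr(k,n) \subseteq V \bigl( \T_{I_{k,n-k}} \Gr(k,n) \bigr) V^\tp$. The reverse inclusion is equally immediate: any $V M V^\tp$ with $M$ of the above form is a matrix of the shape \eqref{eq:tanvec}, hence lies in $\T_Q \Gr(k,n)$.

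The only subtle point worth flagging is well-definedness. The set $V\bigl( \T_{I_{k,n-k}} \Gr(k,n) \bigr) V^\tp$ appears to depend on the choice of eigenbasis $V$, but $Q$ admits many such bases. This is not a genuine obstacle: Proposition~\ref{prop:transtan} has already shown that any two eigenbases differ by right multiplication by a block-diagonal element $\diag(Q_1, Q_2) \in \O(k) \times \O(n-k)$, and conjugation by such a block-diagonal matrix sends $\begin{bsmallmatrix} 0 & B \\ B^\tp & 0 \end{bsmallmatrix}$ to $\begin{bsmallmatrix} 0 & Q_1 B Q_2^\tp \\ Q_2 B^\tp Q_1^\tp & 0 \end{bsmallmatrix}$, which has the same off-diagonal-block form. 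Hence the set $\T_{I_{k,n-k}} \Gr(k,n)$ is invariant under this conjugation and the description of $\T_Q \Gr(k,n)$ is unambiguous. In short, the corollary follows in a few lines by combining Propositions~\ref{prop:tangent} and~\ref{prop:transtan}, with no substantive computation beyond what has already been done.
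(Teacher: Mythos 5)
Your proposal is correct and follows exactly the route the paper intends: the corollary is stated without a separate proof precisely because it is the specialization of \eqref{eq:tan2} in Proposition~\ref{prop:tangent} to $V = I_n$ at $I_{k,n-k}$, combined with the independence of the eigenbasis guaranteed by Proposition~\ref{prop:transtan}. Your extra remark on well-definedness under conjugation by $\diag(Q_1,Q_2)$ is the same observation the paper makes in the sentence preceding the corollary, so nothing is missing.
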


With the tangent spaces  characterized, we may now define an inner product $\langle \cdot, \cdot\rangle_Q$ on each $\T_Q\Gr(k,n)$ that varies smoothly over all $Q \in \Gr(k,n)$, i.e., a \emph{Riemannian metric}. With the involution model, $\Gr(k,n)$ is a submanifold of $\O(n)$ and there is a natural choice, namely, the  Riemannian metric inherited from that on $\O(n)$.
\begin{proposition}[Riemannian metric]\label{prop:metric}
Let $Q\in \Gr(k,n)$ with $Q = V I_{k,n-k} V^\tp$ and
\[
X = 
V \begin{bmatrix}
0 & B \\
B^\tp & 0
\end{bmatrix}V^\tp
,\quad Y = 
V \begin{bmatrix}
0 & C \\
C^\tp & 0
\end{bmatrix}V^\tp
 \in \T_Q \Gr(k,n).
\]
Then
\begin{equation}\label{eq:metric}
\langle X, Y \rangle_Q \coloneqq \tr(XY) =2\tr(B^\tp C) 
\end{equation}
defines a Riemannian metric.
The corresponding Riemannian norm is
\begin{equation}\label{eq:norm}
\lVert X \rVert_Q \coloneqq \sqrt{\langle X, X \rangle}_Q= \lVert X \rVert_\F =\sqrt{2} \lVert B \rVert_\F.
\end{equation}
\end{proposition}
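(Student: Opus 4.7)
The plan is to first establish the identity $\tr(XY)=2\tr(B^\tp C)$ by a direct block-matrix computation, and then to verify the three requirements of a Riemannian metric (well-definedness, pointwise inner product, smooth dependence on the base point). The norm formula will then follow immediately from the diagonal case $X=Y$.

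For the central computation, I would write
\[
XY = V\begin{bmatrix}0 & B\\ B^\tp & 0\end{bmatrix}\begin{bmatrix}0 & C\\ C^\tp & 0\end{bmatrix}V^\tp = V\begin{bmatrix}BC^\tp & 0\\ 0 & B^\tp C\end{bmatrix}V^\tp,
\]
so by cyclicity of trace $\tr(XY)=\tr(BC^\tp)+\tr(B^\tp C)=2\tr(B^\tp C)$. A welcome byproduct is that the middle expression $\tr(XY)$ makes no reference to the eigenbasis $V$ or to the block $B$, so the equality $2\tr(B^\tp C)=\tr(XY)$ automatically shows that the right-hand side is independent of the particular eigendecomposition of $Q$; this is what lets us use the $B,C$ form without invoking Proposition~\ref{prop:transtan} explicitly (though one may alternatively verify directly that under the change $B\mapsto Q_1^\tp B Q_2$, $C\mapsto Q_1^\tp C Q_2$ prescribed by \eqref{eq:transtan}, $\tr(B^\tp C)$ is preserved).

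Given well-definedness, the inner-product axioms on $\T_Q\Gr(k,n)$ are immediate: bilinearity and symmetry are inherited from the trace form on $\mathbb{R}^{n\times n}$, and positive-definiteness follows because $\tr(X^2)=2\tr(B^\tp B)=2\lVert B\rVert_\F^2$, which vanishes precisely when $B=0$, i.e.\ when $X=0$ by \eqref{eq:tanvec}. Smoothness in $Q$ is inherited from the ambient smooth bilinear form $(X,Y)\mapsto\tr(XY)$ on $\mathbb{R}^{n\times n}\times\mathbb{R}^{n\times n}$ restricted to the tangent bundle of the embedded submanifold $\Gr(k,n)\subset\O(n)$; equivalently, this is just the restriction to $\Gr(k,n)$ of the standard bi-invariant metric on $\O(n)$.

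Finally, for the norm identity, since $X^\tp=X$ we have $\lVert X\rVert_\F^2=\tr(X^\tp X)=\tr(X^2)=\langle X,X\rangle_Q$, and the block computation above with $C=B$ gives $\langle X,X\rangle_Q=2\lVert B\rVert_\F^2$, yielding \eqref{eq:norm}. There is no serious obstacle here; the only point to be careful about is confirming that $2\tr(B^\tp C)$ is basis-independent, and as noted this is automatic from the coordinate-free expression $\tr(XY)$.
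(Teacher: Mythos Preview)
Your proof is correct and in fact more complete than what the paper offers: the paper states this proposition without a formal proof, giving only the remark that the metric is the restriction of the bi-invariant metric $g_Q(X,Y)=\tr(X^\tp Y)$ on $\O(n)$. Your explicit block computation verifying $\tr(XY)=2\tr(B^\tp C)$, together with the observation that the coordinate-free expression $\tr(XY)$ makes basis-independence automatic, supplies exactly the details the paper leaves implicit, and your derivation of the norm identity via $X^\tp=X$ is the intended one.
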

The Riemannian metric in \eqref{eq:metric} is induced by the  unique (up to a positive constant multiple) bi-invariant Riemannian metric on $\O(n)$:
\[
g_Q (X, Y) \coloneqq \tr(X^\tp Y),\quad Q\in \O(n), \quad X,Y\in \T_Q \O(n).
\]
Here bi-invariance may be taken to mean
\[
g_{V_1 Q V^\tp_2} (V_1 X V^\tp_2, V_1Y V^\tp_2) = g_Q (X, Y)
\]
for all $Q, V_1, V_2 \in \O(n)$ and $X,Y\in \T_Q \O(n)$. 

There are also natural Riemannian metrics \cite{AMS,EAS,HHT} on the other four models in Table~\ref{tab:models} but they differ from each other by a constant. As such, it is not possible for us to choose our metric \eqref{eq:metric} so that the diffeomorphisms in Propositions~\ref{prop:G1}--\ref{prop:G4} are all isometry but we do have the next best thing.
\begin{proposition}[Isometry]\label{prop:isometric}
All models in Table~\ref{tab:models} are, up to a constant factor, isometric as Riemannian manifolds.
\end{proposition}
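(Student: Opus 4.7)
The plan is to exploit the natural $\O(n)$-action on each of the five models in Table~\ref{tab:models}. For any $R \in \O(n)$, the action sends $\lb V \rb \mapsto \lb RV \rb$, $\lb Y \rb \mapsto \lb RY \rb$, $\lb A \rb \mapsto \lb RA \rb$ on the orthogonal, Stiefel, and full-rank models, and $P \mapsto R P R^\tp$, $Q \mapsto R Q R^\tp$ on the projection and involution models. Each change-of-coordinates map $\varphi_1, \varphi_2, \varphi_3, \varphi_4$ is immediately $\O(n)$-equivariant, and the natural Riemannian metrics on the first four models (as defined in \cite{AMS, EAS, HHT}) are $\O(n)$-invariant, as is our metric \eqref{eq:metric} on the involution model (which is a Frobenius trace, preserved by conjugation). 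Hence each pullback $\varphi_i^* \langle \cdot, \cdot \rangle$ is another $\O(n)$-invariant Riemannian metric on the source model, and I need to show that it agrees with the intrinsic one up to a positive scalar.

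Since $\O(n)$ acts transitively on $\Gr(k,n)$, comparing two $\O(n)$-invariant metrics reduces to comparing inner products at a single base point that are invariant under the isotropy action there. I take the base point $I_{k,n-k} \in \Gr(k,n)$; by Proposition~\ref{prop:transtan} and Corollary~\ref{cor:tangent space}, its isotropy subgroup $\O(k) \times \O(n-k)$ acts on $\T_{I_{k,n-k}} \Gr(k,n) \cong \mathbb{R}^{k \times (n-k)}$ via $(Q_1, Q_2) \cdot B = Q_1 B Q_2^\tp$. This is the external tensor product of the standard (absolutely irreducible) real representations of $\O(k)$ and $\O(n-k)$, so it is itself absolutely irreducible over $\mathbb{R}$. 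Schur's lemma then implies that any two invariant inner products on $\mathbb{R}^{k \times (n-k)}$ are positive scalar multiples of each other, and therefore $\varphi_i^* \langle \cdot, \cdot \rangle = c_i \langle \cdot, \cdot \rangle_i$ for some constant $c_i > 0$.

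The main technical step will be identifying each quotient tangent space (for the orthogonal, Stiefel, and full-rank models) with $\mathbb{R}^{k \times (n-k)}$ in an $\O(k) \times \O(n-k)$-equivariant manner so that Schur's lemma applies cleanly; this amounts to a routine unwinding of horizontal lifts at the chosen base point. An entirely computational alternative is to bypass Schur and compute $d\varphi_i$ at the base point directly, reading off the ratio: for $\varphi_2$ one has $d\varphi_2(X) = 2X$, so $\varphi_2^* \langle \cdot, \cdot \rangle = 4 \langle \cdot, \cdot \rangle$, and analogous short computations pin down the constants $c_1, c_3, c_4$ for the remaining three maps.
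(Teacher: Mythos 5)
Your argument is correct, but it takes a genuinely different route from the paper's. The paper proves the statement by brute force for $\varphi_1$: it quotes the explicit tangent-vector and metric formulas of \cite{EAS} for the orthogonal model, computes $(d\varphi_1)_{\lb I_n\rb}$ directly (getting the factor $2$, hence $8$ on the metric), extends from the identity coset to all of $\Gr(k,n)$ using invariance of both metrics under left multiplication by $\O(n)$, and then declares the cases $\varphi_2,\varphi_3,\varphi_4$ similar. Your main route replaces the differential computation by a symmetry argument: equivariance of the $\varphi_i$ plus $\O(n)$-invariance of all five metrics reduces the comparison to a single tangent space, where the isotropy representation of $\O(k)\times\O(n-k)$ on $\mathbb{R}^{k\times(n-k)}$, $B\mapsto Q_1BQ_2^\tp$, is absolutely irreducible (external tensor product of the standard representations, valid for all $1\le k\le n-1$, including the $\O(1)$ factors), so Schur's lemma forces any two invariant inner products to be proportional. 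What this buys is uniformity --- all four change-of-coordinates maps are handled at once, and you never need the explicit horizontal-lift formulas of \cite{AMS,EAS,HHT}, only the invariance of their metrics (which the paper also invokes); moreover, transporting the isotropy representation through $d\varphi_i$ lets you sidestep even the ``routine unwinding'' you flag, since irreducibility is inherited along an equivariant diffeomorphism. What it does not buy is the constants themselves: the paper's computation yields the factor $8$ for $\varphi_1$, which is used again after Corollary~\ref{cor:geodist}, so your ``computational alternative'' (which is exactly the paper's method) is not optional if the values are wanted. Two small caveats: the stated equivariance of $\varphi_1$ holds for $\lb V\rb\mapsto VI_{k,n-k}V^\tp$ (the formula in Proposition~\ref{prop:G1} has a transpose slip, so read it in that corrected form); and your factor $4$ for $\varphi_2$ is relative to the plain trace form on the projection model --- the paper's remark after the proof implies a different constant relative to the normalization it attributes to \cite{HHT} --- but this is bookkeeping and does not affect the proposition, which only asserts isometry up to a constant.
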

\begin{proof}
We verify that the diffeomorphism $\varphi_1$ in Proposition~\ref{prop:G1} gives an isometry between the orthogonal model and the involution model up a constant factor of $8$.  A tangent vector \cite[Equation~2.30]{EAS} at a point $\lb V \rb \in \O(n)/\bigl(\O(k) \times \O(n-k)\bigr)$ takes the form
\[
V \begin{bmatrix}
0 & B \\
-B^\tp & 0
\end{bmatrix} \in \T_{\lb V \rb}  \O(n)/\bigl(\O(k) \times \O(n-k)\bigr), \quad B \in \mathbb{R}^{k \times (n-k)};
\]
and the Riemannian metric \cite[Equation~2.31]{EAS} on $\O(n)/\bigl(\O(k) \times \O(n-k)\bigr)$ is given by
\[
g_{\lb V \rb} \biggl(V \begin{bmatrix}
0 & B_1\\
-B_1^\tp & 0
\end{bmatrix}, V \begin{bmatrix}
0 & B_2\\
-B_2^\tp & 0
\end{bmatrix} \biggr) = \tr(B_1^\tp B_2).
\]
At $I_n$, the differential can be computed by
\[
(d \varphi_1)_{\lb I_n \rb} \biggl(I_n \begin{bmatrix}
0 & B\\
-B^\tp & 0
\end{bmatrix}\biggr) =2 I_{k,n-k} \begin{bmatrix}
0 & B\\
-B^\tp & 0
\end{bmatrix} =2  \begin{bmatrix}
0 & B\\
B^\tp & 0
\end{bmatrix}.
\]
Since both $g$ and $\langle \cdot, \cdot \rangle$ are invariant under left multiplication by $\O(n)$, we have
\[
\biggl\langle (d \varphi_1)_{\lb V \rb} \biggl(V \begin{bmatrix}
0 & B_1\\
-B_1^\tp & 0
\end{bmatrix}\biggr), (d \varphi_1)_{\lb V \rb} \biggl(V \begin{bmatrix}
0 & B_1\\
-B_1^\tp & 0
\end{bmatrix}\biggr) \biggr\rangle_{\varphi_1(\lb V \rb)} = 8\tr(B_1^\tp B_2).
\]
The proofs for $\varphi_2,\varphi_3,\varphi_4$ are similar and thus omitted.
\end{proof}
As the above proof shows, the diffeomorphism $\varphi_1$ may be easily made an isometry of the orthogonal and involution models by simply changing our metric in \eqref{eq:metric} to ``$\langle X, Y \rangle_Q \coloneqq \frac{1}{8} \tr(XY)$.'' Had we wanted to make $\varphi_2$ into an isometry of the projection and involution models, we would have to choose ``$\langle X, Y \rangle_Q \coloneqq \frac{1}{2} \tr(XY)$'' instead. We see no reason to favor any single existing model and we stick to our choice of metric in \eqref{eq:metric}.

In the involution model, $\Gr(k,n) \subseteq \O(n)$ as a smoothly embedded submanifold and every point  $Q\in \Gr(k,n)$ has a \emph{normal space} $\N_Q \Gr(k,n)$.  We will next determine the expressions for normal vectors.
\begin{proposition}[Normal space]\label{prop:normal}
Let  $Q  \in \Gr(k,n)$ with  $Q = V I_{k,n-k} V^\tp$. The normal space of $\Gr(k,n)$ at $Q$ is given by
\[
\N_{Q} \Gr(k,n) = \biggl\lbrace 
V \begin{bmatrix}
\Lambda_1 & 0 \\
0 & \Lambda_2 
\end{bmatrix} V^\tp \in \mathbb{R}^{n \times n} :
\begin{aligned}
\Lambda_1&\in \mathbb{R}^{k \times k}, &\Lambda_2 &\in \mathbb{R}^{(n-k) \times (n-k)}\\
\smash[t]{\Lambda_1^\tp} &= - \Lambda_1, & \smash[t]{\Lambda_2^\tp} &=- \Lambda_2
\end{aligned}
\biggr\rbrace.
\]
\end{proposition}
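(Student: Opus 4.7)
The plan is to identify $\N_Q\Gr(k,n)$ as the orthogonal complement of $\T_Q\Gr(k,n)$ inside the ambient tangent space $\T_Q\O(n)$ with respect to the Frobenius inner product that the involution model inherits from $\O(n)$. Writing $Q = V I_{k,n-k} V^\tp$ and noting that conjugation by $V \in \O(n)$ preserves both the Frobenius metric and the defining equations of $\O(n)$ and $\Gr(k,n)$, it suffices to treat the base point $I_{k,n-k}$ and then conjugate the answer back.

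First I would describe $\T_Q\O(n)$ in the $V$-basis. Differentiating $Q^\tp Q = I$ along a curve in $\O(n)$ through $Q$ yields $X^\tp Q + Q X = 0$, and at $Q = I_{k,n-k}$ a block decomposition
\[
X = \begin{bmatrix} A & B \\ C & D \end{bmatrix}, \qquad A \in \mathbb{R}^{k \times k},\; D \in \mathbb{R}^{(n-k) \times (n-k)},
\]
forces $A^\tp = -A$, $D^\tp = -D$, and $C = B^\tp$. By Corollary~\ref{cor:tangent space}, $\T_{I_{k,n-k}}\Gr(k,n)$ sits inside this as the slice where the additional constraint $X^\tp = X$ kills the diagonal blocks, leaving $A = D = 0$.

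Next I would compute the orthogonal complement inside $\T_{I_{k,n-k}}\O(n)$. For $X = \bigl[\begin{smallmatrix} A & B \\ B^\tp & D \end{smallmatrix}\bigr]$ with $A, D$ skew and $Y = \bigl[\begin{smallmatrix} 0 & B' \\ B'^\tp & 0 \end{smallmatrix}\bigr]$, a direct block multiplication gives $\tr(X^\tp Y) = 2\tr(B^\tp B')$, which vanishes for every admissible $B'$ precisely when $B = 0$. Hence $\N_{I_{k,n-k}}\Gr(k,n)$ consists of block-diagonal matrices $\diag(\Lambda_1, \Lambda_2)$ with $\Lambda_1 \in \mathbb{R}^{k \times k}$ and $\Lambda_2 \in \mathbb{R}^{(n-k) \times (n-k)}$ both skew-symmetric.

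Finally I would transport this description back to a general $Q$ by conjugating with $V$. Since $\tr\bigl((V X V^\tp)^\tp (V Y V^\tp)\bigr) = \tr(X^\tp Y)$, the orthogonality relation is preserved under conjugation, and the argument of Proposition~\ref{prop:transtan} shows that the resulting set does not depend on the choice of eigenbasis $V$. The only subtlety is bookkeeping in the block picture, where one must carefully separate the anticommutation with $Q$ that cuts out $\T_Q\O(n)$ from the symmetry condition $X^\tp = X$ that further cuts out $\T_Q\Gr(k,n)$; once the blocks are labeled, the remaining calculations are routine.
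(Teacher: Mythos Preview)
Your proposal is correct and follows essentially the same approach as the paper: both identify $\N_Q\Gr(k,n)$ as the orthogonal complement of $\T_Q\Gr(k,n)$ inside $\T_Q\O(n)$ with respect to the trace inner product, and both carry out the computation via a $2\times 2$ block decomposition. The only cosmetic difference is that you first reduce to the base point $I_{k,n-k}$ and describe $\T_{I_{k,n-k}}\O(n)$ directly from the constraint $X^\tp Q + QX = 0$, whereas the paper works at a general $Q$ using the parameterization $\T_Q\O(n) = \{Q\Lambda : \Lambda^\tp = -\Lambda\}$ and conjugates $\Lambda$ by $V$; the resulting block calculation and conclusion are identical. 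One small wording quibble: in your closing remark, the condition $X^\tp Q + QX = 0$ that cuts out $\T_Q\O(n)$ is not literally anticommutation (that would be $XQ + QX = 0$, which is part of the description of $\T_Q\Gr(k,n)$), but this does not affect the argument.
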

\begin{proof}
The tangent space of a point $Q \in \O(n)$ is given by
\[
\T_Q \O(n) = \{ Q \Lambda \in \mathbb{R}^{n \times n} : \Lambda^\tp =- \Lambda \}.
\]
A tangent vector $Q \Lambda  \in \T_Q \O(n)$ is normal to $\Gr(k,n)$ at $Q$ iff
\[
0 = \langle X, Q \Lambda \rangle_Q =\tr(X^\tp Q \Lambda ),
\]
for all $X\in \T_Q\Gr(k,n)$. By \eqref{eq:tanvec}, $X = V\begin{bsmallmatrix}
0 & B \\
B^\tp & 0
\end{bsmallmatrix}V^\tp$ where $Q = V I_{k,n-k} V^\tp$. Thus
\begin{equation}\label{eq:nor2}
\tr \left( V^\tp \Lambda V \begin{bmatrix}
0 & -B \\
B^\tp & 0
\end{bmatrix}  \right) = 0
\end{equation}
for all $B\in \mathbb{R}^{k \times (n-k)}$. Since \eqref{eq:nor2} must hold for all $B\in \mathbb{R}^{k \times (n-k)}$, we must have
\begin{equation}\label{eq:nor3}
\Lambda  = 
V \begin{bmatrix}
\Lambda_1 & 0 \\
0 &  \Lambda_2
\end{bmatrix} V^\tp, 
\end{equation}
for some skew-symmetric matrices $\Lambda_1 \in \mathbb{R}^{k\times k}$, $\Lambda_2 \in \mathbb{R}^{(n-k) \times (n-k)}$, and therefore,
\[
Q \Lambda = V I_{k,n-k} V^\tp \Lambda = V \begin{bmatrix}
\Lambda_1 & 0 \\
0 & -\Lambda_2
\end{bmatrix} V^\tp.
\]
Conversely, any $\Lambda$ of the form in \eqref{eq:nor3} must satisfy \eqref{eq:nor2}.
\end{proof}

Propositions~\ref{prop:tangent} and \ref{prop:normal} allow us to explicitly decompose the tangent space of  $\O(n)$ at a point $Q \in \Gr(k,n)$ into
\begin{align*}
\T_Q \O(n) = \T_Q \Gr(k,n) &\oplus \N_Q \Gr(k,n),\\
Q \Lambda =
QV \begin{bmatrix}
0 & B \\
-B^\tp & 0 
\end{bmatrix} V^\tp &+ V \begin{bmatrix}
\Lambda_1 & 0 \\
0 & \Lambda_2 
\end{bmatrix} V^\tp .
\end{align*}
For later purposes, it will be useful to give explicit expressions for the two projection maps.
\begin{corollary}[Projection maps]\label{cor:projection}
Let  $Q  \in \Gr(k,n)$ with $Q = V I_{k,n-k} V^\tp$ and
\[
\proj^{\T}_Q : \T_Q \O(n) \to \T_Q \Gr(k,n), \qquad  \proj^{\N}_Q : \T_Q \O(n) \to \N_Q \Gr(k,n)
\]
be the projection maps onto the tangent and normal spaces of $\Gr(k,n)$ respectively. Then
\begin{equation}\label{eq:proj}
\begin{aligned}
\proj^{\T}_Q (Q \Lambda) &= \frac{1}{2} ( Q \Lambda  -  \Lambda Q) = \frac{1}{2} V ( S + S^\tp) V^\tp,\\
\proj^{\N}_Q (Q \Lambda) &= \frac{1}{2} ( Q  \Lambda + \Lambda Q) = \frac{1}{2} V ( S - S^\tp) V^\tp,
\end{aligned}
\end{equation}
for any decomposition $Q \Lambda = V S V^\tp$ where $S \in \mathbb{R}^{n \times n}$ is such that $I_{k,n-k} S$ is skew-symmetric
\end{corollary}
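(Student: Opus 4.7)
The plan is to verify the two formulas for $\proj^{\T}_Q$ and $\proj^{\N}_Q$ by exploiting the direct sum decomposition $\T_Q \O(n) = \T_Q \Gr(k,n) \oplus \N_Q \Gr(k,n)$ established by Propositions~\ref{prop:tangent} and \ref{prop:normal}. Once I exhibit a decomposition of $Q\Lambda$ whose two summands lie in $\T_Q \Gr(k,n)$ and $\N_Q \Gr(k,n)$ respectively, uniqueness of the orthogonal decomposition will force those summands to be the projections.

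First I would fix notation. Since $Q\Lambda \in \T_Q\O(n)$, the matrix $\Lambda$ is skew-symmetric, hence $W \coloneqq V^\tp \Lambda V$ is also skew-symmetric. Partition
\[
W = \begin{bmatrix} A & B \\ -B^\tp & C \end{bmatrix},\qquad A \in \mathbb{R}^{k\times k},\; C \in \mathbb{R}^{(n-k)\times (n-k)},\; B\in \mathbb{R}^{k\times (n-k)},
\]
with $A^\tp = -A$ and $C^\tp = -C$. Then $S = V^\tp(Q\Lambda)V = I_{k,n-k}W$ satisfies $I_{k,n-k}S = W$, consistent with the hypothesis that $I_{k,n-k}S$ be skew-symmetric. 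A direct block calculation gives
\[
\tfrac{1}{2}(S + S^\tp) = \begin{bmatrix} 0 & B \\ B^\tp & 0 \end{bmatrix},\qquad \tfrac{1}{2}(S - S^\tp) = \begin{bmatrix} A & 0 \\ 0 & -C \end{bmatrix}.
\]
By the explicit form \eqref{eq:tan2} in Proposition~\ref{prop:tangent}, conjugation of the first by $V$ lies in $\T_Q\Gr(k,n)$; by Proposition~\ref{prop:normal} (with $\Lambda_1 = A$, $\Lambda_2 = -C$, both skew-symmetric), conjugation of the second by $V$ lies in $\N_Q\Gr(k,n)$. Their sum is $VSV^\tp = Q\Lambda$, so uniqueness of the decomposition identifies them as $\proj^{\T}_Q(Q\Lambda)$ and $\proj^{\N}_Q(Q\Lambda)$ respectively, yielding the second equalities in \eqref{eq:proj}.

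It remains to identify these with the intrinsic expressions $\frac{1}{2}(Q\Lambda \mp \Lambda Q)$. Transporting everything by $V$, the task reduces to the block identity
\[
I_{k,n-k} W \pm W I_{k,n-k} = S \pm W I_{k,n-k},
\]
and a one-line block multiplication shows $W I_{k,n-k} = S^\tp$ (the off-diagonal blocks of $W$ and $I_{k,n-k}W$ negate one another, while the diagonal blocks agree up to the sign flip encoded by $I_{k,n-k}$). Substituting back gives $Q\Lambda - \Lambda Q = V(S - S^\tp)V^\tp$ with the off-diagonal tangent blocks, and $Q\Lambda + \Lambda Q = V(S + S^\tp)V^\tp$ --- wait, this is the wrong pairing relative to what I need. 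Accordingly I would be careful to track that $V S V^\tp = Q\Lambda$ uses $S = I_{k,n-k}W$, so that $V S^\tp V^\tp = V W^\tp I_{k,n-k} V^\tp = -\Lambda Q$, giving $Q\Lambda - \Lambda Q = V(S + S^\tp)V^\tp$ and $Q\Lambda + \Lambda Q = V(S - S^\tp)V^\tp$, matching \eqref{eq:proj} exactly.

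I do not expect any real obstacle beyond sign bookkeeping; the only conceptual point is that $S$ (equivalently, the choice of eigenbasis $V$) is not unique, but the intrinsic commutator/anticommutator expressions $\frac{1}{2}(Q\Lambda \mp \Lambda Q)$ make the $V$-independence manifest, so the $V$-dependent expressions merely furnish a computational recipe.
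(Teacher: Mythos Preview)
Your proof is correct and follows essentially the same route as the paper's: decompose $Q\Lambda$ using $S = I_{k,n-k}V^\tp\Lambda V$, check via the block forms in Propositions~\ref{prop:tangent} and \ref{prop:normal} that the two pieces land in $\T_Q\Gr(k,n)$ and $\N_Q\Gr(k,n)$, and invoke uniqueness of the orthogonal splitting. The paper's proof is terser---it asserts the memberships $\frac{1}{2}(Q\Lambda-\Lambda Q)\in\T_Q\Gr(k,n)$ and $\frac{1}{2}(Q\Lambda+\Lambda Q)\in\N_Q\Gr(k,n)$ directly and then checks orthogonality of the two pieces via the inner product---but your explicit block computation is exactly what one would do to verify those assertions, and your identification $VS^\tp V^\tp = -\Lambda Q$ is the same observation the paper leaves implicit.
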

\begin{proof}
We see from Propositions~\ref{prop:tangent} and \ref{prop:normal} that the maps are well defined, i.e.,  $\frac{1}{2} ( Q \Lambda  -  \Lambda Q)  \in \T_Q \Gr(k,n)$ and $\frac{1}{2} ( Q  \Lambda + \Lambda Q) \in \N_Q \Gr(k,n)$, and the images are orthogonal as
\[
\langle  Q \Lambda  -  \Lambda Q, Q  \Lambda + \Lambda Q \rangle_{Q} = 0.
\]
The alternative expressions follow from taking  $S = I_{k,n-k} V^\tp \Lambda V $.
\end{proof}

\section{Exponential map, geodesic, and parallel transport}\label{sec:exp}

An explicit and easily computable formula for a geodesic curve is indispensable in most Riemannian optimization algorithms. By Lemma~\ref{lem:1eig}, any $Q\in \Gr(k,n)$ can be eigendecomposed as $V I_{k,n-k} V^\tp$ for some $V \in \O(n)$. So a curve $\gamma$ in $\Gr(k,n)$ takes the form
\begin{equation}\label{eq:curve1}
\gamma(t) = V(t) I_{k,n-k} V(t)^\tp,
\end{equation}
with $V(t)$ a curve in $\O(n)$ that can in turn be written as
\begin{equation}\label{eq:curve2}
V(t) =V \exp(\Lambda(t)),
\end{equation}
where $\Lambda(t)$ is a curve in the space of $n\times n$ skew-symmetric matrices, $\Lambda(0) = 0$, and $V(0) = V$. We will show in Proposition~\ref{prop:curve} that in the involution model the curve $\Lambda(t)$ takes a particularly simple form. We first prove a useful lemma using the \textsc{cs} decomposition \cite{GNS, Stewart2}.
\begin{lemma}\label{lem:skew}
Let $\Lambda \in \mathbb{R}^{n\times n}$ be skew-symmetric. Then there exist $B\in \mathbb{R}^{k \times (n-k)}$ and two skew-symmetric matrices $ \Lambda_1 \in \mathbb{R}^{k\times k}$, $ \Lambda_2 \in \mathbb{R}^{(n-k) \times (n-k)}$ such that 
\begin{equation}\label{eq:decomp}
\exp(\Lambda)  = \exp \biggl( \begin{bmatrix}
0 & B \\
-B^\tp & 0
\end{bmatrix} \biggr) \exp \biggl( \begin{bmatrix}
\Lambda_1 & 0 \\
0 & \Lambda_2
\end{bmatrix}  \biggr).
\end{equation}
\end{lemma}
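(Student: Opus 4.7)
Plan: I follow the hint and build the decomposition from the CS decomposition of the orthogonal matrix $W := \exp(\Lambda) \in \SO(n)$, partitioned conformably with $n = k + (n-k)$. The CS decomposition writes $W = \diag(U_1, U_2) \, M \, \diag(V_1^\tp, V_2^\tp)$ with $U_1, V_1 \in \O(k)$, $U_2, V_2 \in \O(n-k)$, and middle factor $M$ assembled from plane rotations through principal angles $\theta_1,\dots,\theta_{\min(k,n-k)}$ together with identity blocks. The key observation is that $M$ is itself the exponential of an off-block-diagonal skew-symmetric matrix: each $2\times 2$ rotation by $\theta_i$ equals $\exp\begin{bsmallmatrix} 0 & -\theta_i \\ \theta_i & 0 \end{bsmallmatrix}$, so $M = \exp\begin{bsmallmatrix} 0 & B_0 \\ -B_0^\tp & 0 \end{bsmallmatrix}$ for a specific $B_0 \in \mathbb{R}^{k \times (n-k)}$ carrying the $-\theta_i$ on its diagonal (with the remaining entries zero).

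I then absorb the left block-diagonal factor into the exponent by conjugation. For orthogonal $g$, $g\exp(X)g^\tp = \exp(g X g^\tp)$, and a direct computation shows
\[
\diag(U_1, U_2) \begin{bmatrix} 0 & B_0 \\ -B_0^\tp & 0 \end{bmatrix} \diag(U_1^\tp, U_2^\tp) = \begin{bmatrix} 0 & B \\ -B^\tp & 0 \end{bmatrix}, \quad B := U_1 B_0 U_2^\tp.
\]
Combined with the right block-diagonal factor this yields
\[
W = \exp\begin{bmatrix} 0 & B \\ -B^\tp & 0 \end{bmatrix} \diag(Q_1, Q_2), \qquad Q_1 := U_1 V_1^\tp, \quad Q_2 := U_2 V_2^\tp.
\]
Provided both $Q_i$ lie in $\SO$, surjectivity of $\exp$ on the connected compact group $\SO(m)$ produces skew-symmetric $\Lambda_i$ with $Q_i = \exp(\Lambda_i)$, giving \eqref{eq:decomp}.

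The main obstacle is therefore forcing $Q_1, Q_2 \in \SO$. Taking determinants in the displayed identity gives $\det Q_1 \cdot \det Q_2 = 1$, so they share a sign. When both are $+1$ we are done. When both are $-1$, I repair the decomposition by shifting one principal angle $\theta_j \mapsto \theta_j - \pi$; the effect on $M$ is to negate rows $j$ and $k+j$, which can be absorbed as simultaneous column sign-flips of $U_1$ and $U_2$ at column $j$, preserving $W$ while reversing the sign of both $\det Q_1$ and $\det Q_2$. The modified $\widetilde{B}_0$ retains the off-block-diagonal skew-symmetric shape, so the absorption step still applies. (This repair uses $\min(k,n-k) \ge 1$; the cases $k = 0$ or $k = n$ are trivial since $\Gr(k,n)$ is then a single point.) A slicker but less elementary alternative is to invoke Hopf-Rinow on the compact connected oriented Grassmannian $\SO(n)/(\SO(k) \times \SO(n-k))$, whose Riemannian exponential at the basepoint is surjective, yielding the decomposition directly with $Q_i \in \SO$.
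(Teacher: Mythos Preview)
Your proof is correct and follows essentially the same route as the paper's: CS-decompose $\exp(\Lambda)$, recognize the middle factor as the exponential of an off-block-diagonal skew-symmetric matrix, conjugate the left block-diagonal factor through, and then invoke surjectivity of $\exp$ onto $\SO$. You are in fact more careful than the paper on one point---the paper simply asserts the existence of skew-symmetric $\Lambda_1,\Lambda_2$ with $\exp(\Lambda_1)=UW^\tp$ and $\exp(\Lambda_2)=VZ^\tp$ without checking that these products lie in $\SO(k)$ and $\SO(n-k)$, whereas your angle-shift repair (or the Hopf--Rinow alternative) closes that gap.
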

\begin{proof}
By \eqref{eq:iso}, we may assume $k \le n/2$. Let the \textsc{cs} decomposition of $Q \coloneqq \exp (\Lambda) \in \O(n)$ be
\[
Q = \begin{bmatrix}
U & 0 \\
0 & V
\end{bmatrix} \begin{bmatrix}
\cos \Theta & \sin \Theta & 0  \\
-\sin \Theta & \cos \Theta & 0 \\
0 & 0 & I_{n-2k}
\end{bmatrix}  \begin{bmatrix}
W & 0 \\
0 & Z
\end{bmatrix}^\tp,
\]
where $U, W\in \O(k)$, $V,Z\in \O(n-k)$, and $\Theta = \diag(\theta_1,\dots, \theta_k)$ with $\theta_i \in [0,\pi/2]$, $i=1,\dots,k$. We may write 
\begin{align*}
\begin{bmatrix}
U & 0 \\
0 & V
\end{bmatrix}
\begin{bmatrix}
\cos \Theta & \sin \Theta & 0  \\
-\sin \Theta & \cos \Theta & 0 \\
0 & 0 & I_{n-2k}
\end{bmatrix}   &=
\exp \biggl( 
\begin{bmatrix}
U & 0 \\
0 & V
\end{bmatrix}
\begin{bmatrix}
0 & \Theta & 0 \\
-\Theta & 0 & 0 \\
0 & 0 & 0
\end{bmatrix}
\begin{bmatrix}
U & 0 \\
0 & V
\end{bmatrix}^\tp
\biggr) 
\begin{bmatrix}
U & 0 \\
0 & V
\end{bmatrix}
\\
&= \exp \biggl( \begin{bmatrix}
0 & B \\
-B^\tp & 0
\end{bmatrix} \biggr) 
\begin{bmatrix}
U & 0 \\
0 & V
\end{bmatrix},
\end{align*}
where $B \coloneqq U [\Theta,0] V^\tp \in \mathbb{R}^{k \times (n-k)}$ with $0 \in \mathbb{R}^{k \times (n-2k)}$. Finally, let $\Lambda_1,\Lambda_2$ be skew symmetric matrices such that $\exp(\Lambda_1) = UW^\tp$ and $\exp(\Lambda_2) = VZ^\tp$.
\end{proof}

\begin{proposition}[Curve]\label{prop:curve}
Let $Q\in \Gr(k,n)$ with eigendecomposition $Q = V I_{k,n-k} V^\tp$. Then a curve $\gamma(t)$  in $\Gr(k,n)$ through $Q$ may be expressed as
\begin{equation}\label{eq:gencurve}
\gamma(t) = V \exp \left( \begin{bmatrix}
0 & B(t) \\
-B(t)^\tp & 0
\end{bmatrix} \right) I_{k,n-k} \exp \left( \begin{bmatrix}
0 & -B(t) \\
B(t)^\tp & 0
\end{bmatrix} \right) V^\tp
\end{equation}
for some curve $B(t)$ in $\mathbb{R}^{k \times (n-k)}$ through the zero matrix.
\end{proposition}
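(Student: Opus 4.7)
The plan is to combine the parametrization \eqref{eq:curve1}--\eqref{eq:curve2} of curves in $\Gr(k,n)$ with the decomposition of orthogonal matrices given by Lemma~\ref{lem:skew}, and then exploit the fact that block-diagonal orthogonal matrices are invisible under conjugation of $I_{k,n-k}$.

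Starting from \eqref{eq:curve1}--\eqref{eq:curve2}, I write
\[
\gamma(t) = V \exp(\Lambda(t)) \, I_{k,n-k} \, \exp(\Lambda(t))^\tp V^\tp,
\]
where $\Lambda(t)$ is a skew-symmetric-matrix-valued curve with $\Lambda(0) = 0$; here I used $\exp(\Lambda(t))^\tp = \exp(-\Lambda(t))$. Applying Lemma~\ref{lem:skew} pointwise in $t$ then produces $B(t) \in \mathbb{R}^{k \times (n-k)}$ and skew-symmetric $\Lambda_1(t) \in \mathbb{R}^{k \times k}$, $\Lambda_2(t) \in \mathbb{R}^{(n-k) \times (n-k)}$ such that
\[
\exp(\Lambda(t)) = \exp \begin{bmatrix} 0 & B(t) \\ -B(t)^\tp & 0 \end{bmatrix} \exp \begin{bmatrix} \Lambda_1(t) & 0 \\ 0 & \Lambda_2(t) \end{bmatrix}.
\]

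The key cancellation is that any block-diagonal orthogonal matrix leaves $I_{k,n-k}$ fixed under conjugation: for $Q_1 \in \O(k)$ and $Q_2 \in \O(n-k)$,
\[
\begin{bmatrix} Q_1 & 0 \\ 0 & Q_2 \end{bmatrix} I_{k,n-k} \begin{bmatrix} Q_1^\tp & 0 \\ 0 & Q_2^\tp \end{bmatrix} = \begin{bmatrix} Q_1 Q_1^\tp & 0 \\ 0 & -Q_2 Q_2^\tp \end{bmatrix} = I_{k,n-k}.
\]
Substituting the Lemma's decomposition into the above expression for $\gamma(t)$, the block-diagonal exponential cancels on the two sides of $I_{k,n-k}$ and what remains is exactly the right-hand side of \eqref{eq:gencurve}.

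The only remaining issue is ensuring $B(0) = 0$. Since $\exp(\Lambda(0)) = I$, the decomposition in Lemma~\ref{lem:skew} admits the trivial choice $B = 0$ together with $\Lambda_1 = \Lambda_2 = 0$ at $t = 0$ (the underlying \textsc{cs} decomposition of $I$ is not unique, but this particular branch is available), so we may arrange $B(0) = 0$. I do not anticipate a genuine obstacle: the main content is the one-line block-diagonal cancellation, and the rest is bookkeeping. No regularity of $B(t)$ is claimed in the statement, so we need not argue smoothness of the Lemma~\ref{lem:skew} decomposition in $t$.
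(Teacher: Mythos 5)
Your proposal is correct and follows essentially the same route as the paper: parametrize the curve via \eqref{eq:curve1}--\eqref{eq:curve2}, apply Lemma~\ref{lem:skew} pointwise, and let the block-diagonal factor cancel in the conjugation of $I_{k,n-k}$. Your explicit verification of the block-diagonal cancellation and of $B(0)=0$ merely spells out steps the paper leaves implicit.
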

\begin{proof}
By \eqref{eq:curve1} and \eqref{eq:curve2}, we have
\[
\gamma(t) = V \exp \bigl(\Lambda(t)\bigr) I_{k,n-k} \exp\bigl(-\Lambda(t)\bigr) V^\tp.
\]
By Lemma~\ref{lem:skew}, we may write 
\[
\exp \bigl(\Lambda(t)\bigr) =\exp \left( \begin{bmatrix}
0 & B(t) \\
-B(t)^\tp & 0
\end{bmatrix} \right) \exp \left( \begin{bmatrix}
\Lambda_1(t) & 0 \\
0 & \Lambda_2(t)
\end{bmatrix}  \right),
\] 
which gives the desired parametrization in \eqref{eq:gencurve}.
\end{proof}

Proposition~\ref{prop:curve} yields another way to obtain the expression for tangent vectors in \eqref{eq:tanvec}. Differentiating the curve in \eqref{eq:gencurve} at $t = 0$, we get
\[
\dot{\gamma}(0) =  V \left(  \begin{bmatrix}
0 & -2\dot{B}(0) \\
-2\dot{B}(0)^\tp & 0
\end{bmatrix} I_{k,n-k}  \right) V^\tp \in \T_{Q} \Gr(k,n).
\]
Choosing $B(t)$ to be any curve in $\mathbb{R}^{k\times (n-k)}$ with $B(0) = 0$ and $\dot{B}(0) = -B/2$, we obtain \eqref{eq:tanvec}.

The key ingredient in most manifold optimization algorithms is the geodesic at a point in a direction. In \cite{EAS}, the discussion regarding geodesics on the Grassmannian is brief: Essentially, it says that because a geodesic on the Stiefel manifold $\V(k,n)$ takes the form $Q \exp (t \Lambda)$, a geodesic on the Grassmannian $\V(k,n)/\O(k)$ takes the form $\lb Q \exp (t \Lambda) \rb$. It is hard to be more specific when one uses the Stiefel model. On the other hand, when we use the involution model, the expression \eqref{eq:geo} in the next theorem describes a geodesic precisely, and any point on $\gamma$ can be evaluated with a single \textsc{qr} decomposition (to obtain $V$, see Section~\ref{sec:subroutines}) and a single matrix exponentiation (the two exponents are transposes of each other).
\begin{theorem}[Geodesics I]\label{thm:geo}
Let $Q\in \Gr(k,n)$ and $X \in \T_Q \Gr(k,n)$ with
\begin{equation}\label{eq:geotan}
Q = V I_{k,n-k} V^\tp, \qquad  X = V\begin{bmatrix}
0 & B \\
B^\tp & 0
\end{bmatrix} V^\tp.
\end{equation} 
The geodesic $\gamma$ emanating from $Q$ in the direction $X$ is given by
\begin{equation}\label{eq:geo}
\gamma(t) = V \exp \left( \frac{t}{2} \begin{bmatrix}
0 & -B \\
B^\tp & 0
\end{bmatrix} \right) I_{k,n-k} \exp \left( \frac{t}{2}\begin{bmatrix}
0 & B \\
-B^\tp & 0
\end{bmatrix} \right) V^\tp.
\end{equation}
The differential equation for $\gamma$ is
\begin{equation}\label{eq:geode}
\gamma(t)^\tp \ddot{\gamma}(t) -  \ddot{\gamma}(t)^\tp \gamma(t) = 0,\qquad \gamma(0) = Q,\qquad \dot{\gamma}(0) = X.
\end{equation}
\end{theorem}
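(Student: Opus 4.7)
The plan is to rewrite the two-exponential formula in \eqref{eq:geo} as a single exponential, verify the initial conditions, and then show that the ambient (Frobenius) acceleration of $\gamma$ is everywhere normal to $\T_{\gamma(t)} \Gr(k,n)$. This normality will simultaneously establish that $\gamma$ is a Riemannian geodesic and that it satisfies the ODE \eqref{eq:geode}.

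The key simplification is as follows. Set $\Lambda \coloneqq \begin{bsmallmatrix} 0 & B \\ -B^\tp & 0 \end{bsmallmatrix}$; a one-line block computation yields the anticommutation $\Lambda I_{k,n-k} + I_{k,n-k} \Lambda = 0$, and power-series expansion then gives $I_{k,n-k} e^{s\Lambda} = e^{-s\Lambda} I_{k,n-k}$ for every $s \in \mathbb{R}$. Applying this once collapses \eqref{eq:geo} to
\[
\gamma(t) = V e^{-t\Lambda} I_{k,n-k} V^\tp.
\]
Furthermore $\Lambda^2 = -\begin{bsmallmatrix} BB^\tp & 0 \\ 0 & B^\tp B \end{bsmallmatrix}$ is block-diagonal and thus commutes with both $I_{k,n-k}$ and $e^{-t\Lambda}$. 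These three identities do all the work. In particular, $\gamma(0) = V I_{k,n-k} V^\tp = Q$, and $\dot\gamma(t) = -V \Lambda e^{-t\Lambda} I_{k,n-k} V^\tp$ evaluated at $t=0$ becomes $-V\Lambda I_{k,n-k} V^\tp$; block-multiplying $-\Lambda I_{k,n-k}$ yields exactly $\begin{bsmallmatrix} 0 & B \\ B^\tp & 0 \end{bsmallmatrix}$, so $\dot\gamma(0) = X$. The membership $\gamma(t) \in \Gr(k,n)$ (symmetry, $\gamma(t)^2 = I$, and trace $2k - n$) is then immediate from the anticommutation, continuity, and the fact that the eigenvalues are pinned at $\pm 1$.

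Next I would reduce the geodesic criterion to a commutator relation. Since the metric of Proposition~\ref{prop:metric} is the restriction to $\Gr(k,n)$ of the Frobenius inner product on $\mathbb{R}^{n \times n}$, standard theory says $\gamma$ is a Riemannian geodesic iff its Euclidean acceleration $\ddot\gamma(t)$ is Frobenius-orthogonal to $\T_{\gamma(t)} \Gr(k,n)$. Because $\gamma$ is symmetric, so is $\ddot\gamma$. Working in an eigenbasis of $\gamma(t)$, Proposition~\ref{prop:tangent} identifies $\T_{\gamma(t)} \Gr(k,n)$ with the symmetric matrices whose $V$-conjugate has zero diagonal blocks; the Frobenius complement inside the symmetric matrices is therefore the symmetric matrices whose $V$-conjugate is block-diagonal, i.e., those that commute with $\gamma(t)$. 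Hence the geodesic criterion reduces to $[\gamma(t), \ddot\gamma(t)] = 0$, and together with the symmetry of both $\gamma$ and $\ddot\gamma$ this is precisely \eqref{eq:geode}.

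Finally I would verify the commutator vanishes by direct expansion. From the simplified form, $\ddot\gamma(t) = V \Lambda^2 e^{-t\Lambda} I_{k,n-k} V^\tp$. In both $\gamma \ddot\gamma$ and $\ddot\gamma \gamma$, the two middle copies of $I_{k,n-k}$ sandwich an $e^{-t\Lambda}$ that the anticommutation converts into $e^{t\Lambda}$, cancelling the remaining $e^{-t\Lambda}$, while $\Lambda^2$ slides freely through both $I_{k,n-k}$ and the exponentials. Both products telescope to the same matrix $V \Lambda^2 V^\tp$, giving $[\gamma, \ddot\gamma] = 0$. There is no single hard step; the entire argument is book-keeping made transparent by the single-exponential rewriting and the anticommutation $\Lambda I_{k,n-k} + I_{k,n-k} \Lambda = 0$.
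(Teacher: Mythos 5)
Your proof is correct, but it takes a genuinely different route from the paper's. The paper first parametrizes \emph{every} curve through $Q$ using Proposition~\ref{prop:curve} (which rests on the \textsc{cs}-decomposition Lemma~\ref{lem:skew}), then runs a first-variation-of-arclength argument to show that criticality forces $\ddot{B}(t)=0$, i.e.\ $B(t)=-tB/2$, which yields \eqref{eq:geo}; the equation \eqref{eq:geode} is then obtained separately by computing $\dot\gamma,\ddot\gamma$ explicitly and applying the tangent projection formula \eqref{eq:proj}. You instead \emph{verify} that the stated curve works: the anticommutation $\Lambda I_{k,n-k}=-I_{k,n-k}\Lambda$ collapses \eqref{eq:geo} to the single-exponential form $\gamma(t)=Ve^{-t\Lambda}I_{k,n-k}V^\tp$, from which membership in $\Gr(k,n)$, the initial conditions, and $\gamma\ddot\gamma=\ddot\gamma\gamma=V\Lambda^2V^\tp$ all follow by block bookkeeping; since the metric of Proposition~\ref{prop:metric} is the restriction of the Frobenius inner product and, by Proposition~\ref{prop:tangent}, a symmetric matrix is Frobenius-orthogonal to $\T_{\gamma(t)}\Gr(k,n)$ exactly when it commutes with $\gamma(t)$, normality of the ambient acceleration is equivalent to \eqref{eq:geode}, and uniqueness of geodesics with prescribed initial point and velocity finishes the argument. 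The trade-off: the paper's variational computation \emph{derives} the geodesic rather than checking a guessed formula, and its curve parametrization is reused elsewhere; your argument is shorter and more elementary, avoiding the \textsc{cs} decomposition and the calculus of variations entirely, and it delivers the formula and the ODE in one stroke, at the cost of invoking the standard extrinsic characterization of geodesics on an isometrically embedded submanifold (which the paper itself implicitly uses when it writes the geodesic equation as $\proj^{\T}_{\gamma(t)}(\ddot\gamma)=0$). The single-exponential rewriting of \eqref{eq:geo} is a worthwhile observation in its own right.
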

\begin{proof}
By Proposition~\ref{prop:curve}, any curve through $Q$ must take the form
\[
\gamma(t) = V \exp \left( \begin{bmatrix}
0 & B(t) \\
-B(t)^\tp & 0
\end{bmatrix} \right) I_{k,n-k} \exp \left( \begin{bmatrix}
0 & -B(t) \\
B(t)^\tp & 0
\end{bmatrix} \right) V^\tp,
\]
where $B(0) = 0$. Since $\gamma$ is in the direction $X$, we have that $\dot{\gamma}(0) = X$, and thus $\dot{B}(0) =-B/2$.
It remains to employ the fact that as a geodesic, $\gamma$ is a critical curve of the length functional
\[
L(\gamma) \coloneqq \int_0^1 \lVert \dot{\gamma}(t) \rVert_{\gamma(t)} \, dt
\]
where the Riemannian norm is as in \eqref{eq:norm}. 
Let $\varepsilon >0$. Consider a variation of $\gamma(t)$ with respect to a $C^1$-curve $C(t)$ in $\mathbb{R}^{k \times (n-k)}$:
\[
\begin{adjustbox}{width=\textwidth,totalheight=\textheight,keepaspectratio}
$
\gamma_\varepsilon(t) = V \exp \left( \begin{bmatrix}
0 & B(t) + \varepsilon C(t) \\
-B(t)^\tp- \varepsilon C(t)^\tp & 0
\end{bmatrix} \right) I_{k,n-k} \exp \left( \begin{bmatrix}
0 & -B(t) - \varepsilon C(t) \\
B(t)^\tp + \varepsilon C(t)^\tp & 0
\end{bmatrix} \right) V^\tp.
$
\end{adjustbox}
\]
We require $C(0) = C(1) = 0$ so that $\gamma_\varepsilon $ is a variation of $\gamma$ with fixed end points. The tangent vector of $\gamma_\varepsilon$ at time $t$ is given by 
\[
\begin{adjustbox}{width=\textwidth,totalheight=\textheight,keepaspectratio}
$
 V \exp \left( \begin{bmatrix}
0 & B(t) + \varepsilon C(t) \\
-B(t)^\tp- \varepsilon C(t)^\tp & 0
\end{bmatrix} \right) \left( -2  \begin{bmatrix}
0 & \dot{B}(t) + \varepsilon \dot{C}(t) \\
\dot{B}(t) + \varepsilon \dot{C}(t)^\tp & 0
\end{bmatrix}  \right)     \exp \left( \begin{bmatrix}
0 & -B(t) - \varepsilon C(t) \\
B(t)^\tp + \varepsilon C(t)^\tp & 0
\end{bmatrix} \right) V^\tp
$
\end{adjustbox}
\]
and so $\lVert \dot{\gamma_\varepsilon}(t) \rVert_
{\gamma(t)} =  2\sqrt{2} \lVert \dot{B}(t) + \varepsilon \dot{C}(t) \rVert_\F$ where $\lVert\, \cdot\, \rVert_\F$ denotes Frobenius norm. Hence,
\[
0 = \frac{d}{d \varepsilon} L \bigl( \gamma_\varepsilon (t) \bigr) \Bigr\rvert_{\varepsilon = 0} =2\sqrt{2}  \int_0^1 \frac{\tr\bigl(\dot{B}(t)^\tp \dot{C}(t)\bigr)}{\lVert \dot{B}(t) \rVert_\F} \, dt.
\]
As $\gamma (t)$ is a geodesic, $\lVert \dot{\gamma} (t) \rVert_{\gamma(t)} $ and thus  $\lVert \dot{B} (t) \rVert_\F$ must be a constant $K > 0$. Therefore, we have 
\[
0 = \frac{1}{K} \int_0^1 \tr\bigl( \dot{B}(t)^\tp \dot{C}(t)\bigr) \, dt =- \frac{1}{K} \int_0^1 \tr\bigl(\ddot{B}(t)^\tp  C(t) \bigr) \, dt,
\]
implying that $\ddot{B}(t) = 0$ and thus $B(t) = t\dot{B}(0)= -tB/2$. Lastly, since 
\begin{equation}\label{eq:dotgamma}
\begin{adjustbox}{width=0.9\textwidth,totalheight=\textheight,keepaspectratio}
$
\begin{aligned}
\dot{\gamma}(t) &=  V \exp \left( \begin{bmatrix}
0 & B(t)  \\
-B(t)^\tp & 0
\end{bmatrix} \right) \left( -2  \begin{bmatrix}
0 & \dot{B }(t) \\
\dot{B }(t)^\tp & 0
\end{bmatrix}  \right)     \exp \left( \begin{bmatrix}
0 & -B(t)  \\
B(t)^\tp  & 0
\end{bmatrix} \right) V^\tp, \\
\ddot{\gamma}(t) &= V \exp \left( \begin{bmatrix}
0 & B(t) \\
-B(t)^\tp & 0
\end{bmatrix} \right) \left( 
-4 \begin{bmatrix}
\dot{B}(t)\dot{B}(t)^\tp & 0 \\
0 & -\dot{B}(t)^\tp \dot{B}(t)    
\end{bmatrix}
-2  \begin{bmatrix}
0 & \ddot{B }(t) \\
\ddot{B }(t)^\tp & 0
\end{bmatrix}  \right) 
 \exp \left( \begin{bmatrix}
0 & -B(t)  \\
B(t)^\tp  & 0
\end{bmatrix} \right) V^\tp,
\end{aligned}
$
\end{adjustbox}
\end{equation}
and the differential equation for a geodesic curve $\gamma$ is 
\[
\proj^{\T}_{\gamma(t)} (\ddot{\gamma})  = 0,\qquad \gamma(0) = Q,\qquad \dot{\gamma}(0) = X,
\]
we obtain \eqref{eq:geode} from the expression for tangent projection in \eqref{eq:proj}.
\end{proof}
Theorem~\ref{thm:geo} also gives the \emph{exponential map} of $X$.
\begin{corollary}[Exponential map]\label{cor:exp}
Let $Q\in \Gr(k,n)$ and $X \in \T_Q \Gr(k,n)$ be as in \eqref{eq:geotan}.
Then
\begin{equation}\label{eq:exp}
\exp_Q(X) \coloneqq \gamma(1) = V \exp \left(\frac{1}{2} \begin{bmatrix}
0 & -B \\
B^\tp & 0
\end{bmatrix} \right) I_{k,n-k} \exp \left(\frac{1}{2} \begin{bmatrix}
0 & B \\
-B^\tp & 0
\end{bmatrix} \right) V^\tp.
\end{equation}
The length of the geodesic segment from $\gamma(0)=0$ to $\gamma(1)=\exp_Q(X) $ is 
\begin{equation}\label{eq:arc}
L(\gamma)  =\lVert X \rVert_\F =\sqrt{2}\lVert B \rVert_\F.
\end{equation}
\end{corollary}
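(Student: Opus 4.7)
The plan is to read off both statements directly from Theorem~\ref{thm:geo} and its proof, without redoing any nontrivial computation.

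For the formula \eqref{eq:exp}, the exponential map is by definition $\exp_Q(X) = \gamma(1)$, where $\gamma$ is the unique geodesic with $\gamma(0) = Q$ and $\dot\gamma(0) = X$. Since Theorem~\ref{thm:geo} already gives the closed form \eqref{eq:geo} for this $\gamma(t)$, I would simply substitute $t = 1$ and collect the factor of $1/2$ inside the matrix exponentials. Nothing else is needed.

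For the arc-length formula \eqref{eq:arc}, I would use the intermediate computation inside the proof of Theorem~\ref{thm:geo}. Recall that for the variation $\gamma_\varepsilon$ it was shown that $\lVert \dot\gamma_\varepsilon(t) \rVert_{\gamma_\varepsilon(t)} = 2\sqrt{2}\, \lVert \dot B(t) + \varepsilon \dot C(t)\rVert_\F$; setting $\varepsilon = 0$ gives $\lVert \dot\gamma(t)\rVert_{\gamma(t)} = 2\sqrt{2}\,\lVert \dot B(t)\rVert_\F$. Alternatively, one can read this straight off the Riemannian norm formula \eqref{eq:norm} in Proposition~\ref{prop:metric} applied to the tangent vector $\dot\gamma(t)$ computed in \eqref{eq:dotgamma}, whose off-diagonal block (after stripping the orthogonal conjugations) is $-2\dot B(t)$. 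Then, since the proof of Theorem~\ref{thm:geo} established $B(t) = -tB/2$, we have $\dot B(t) \equiv -B/2$, a constant, so
\[
L(\gamma) = \int_0^1 \lVert \dot\gamma(t)\rVert_{\gamma(t)}\,dt = \int_0^1 2\sqrt{2}\,\tfrac{1}{2}\lVert B\rVert_\F\,dt = \sqrt{2}\,\lVert B\rVert_\F,
\]
and the identity $\lVert X\rVert_\F = \sqrt{2}\,\lVert B\rVert_\F$ is exactly \eqref{eq:norm}.

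There is no real obstacle here; the only care needed is to keep track of constants, in particular the factor $-2$ that appears when differentiating the symmetric product $\exp(\Lambda(t))\, I_{k,n-k}\,\exp(-\Lambda(t))$ at $t=0$ (which produced the $1/2$ in $\dot B(0) = -B/2$), and the factor of $\sqrt{2}$ coming from the Riemannian metric \eqref{eq:metric}. Both are already handled in the proof of Theorem~\ref{thm:geo}, so the corollary amounts to bookkeeping.
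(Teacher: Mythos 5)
Your proposal is correct and matches the paper's own treatment: the paper states this corollary without a separate proof, as an immediate consequence of Theorem~\ref{thm:geo}, exactly as you do by setting $t=1$ in \eqref{eq:geo} and using the constant speed $\lVert\dot\gamma(t)\rVert_{\gamma(t)}=2\sqrt{2}\lVert\dot B(t)\rVert_\F$ with $\dot B(t)\equiv -B/2$ to get $L(\gamma)=\sqrt{2}\lVert B\rVert_\F=\lVert X\rVert_\F$. The constant bookkeeping is right, so nothing further is needed.
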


The Grassmannian is geodesically complete and so any two points can be joined by a length-minimizing geodesic. In the next proposition, we will derive an explicit expression for such a geodesic in the involution model. By \eqref{eq:iso}, there will be no loss of generality in assuming that $k \le n/2$ in the following --- if $k > n/2$, then we just replace $k$ by $n-k$.
\begin{proposition}[Geodesics II]\label{prop:geo}
Let $k \le n/2$. Let $Q_0,Q_1\in \Gr(k,n)$ with eigendecompositions
$Q_0 = V_0 I_{k,n-k} V_0^\tp$ and $Q_1 = V_1 I_{k,n-k} V_1^\tp$. Let the \textsc{cs} decomposition of $V_0^\tp V_1 \in \O(n)$ be
\begin{equation}\label{eq:csd}
V_0^\tp V_1 = 
\begin{bmatrix}
U & 0 \\
0 & V
\end{bmatrix} 
\begin{bmatrix}
\cos \Theta & \sin \Theta & 0  \\
-\sin \Theta & \cos \Theta & 0 \\
0 & 0 & I_{n-2k}
\end{bmatrix}
\begin{bmatrix}
W & 0 \\
0 & Z
\end{bmatrix}^\tp
\end{equation}
where $U,W\in \O(k)$, $V,Z\in \O(n-k)$, $\Theta = \diag(\theta_1,\dots,\theta_k)\in \mathbb{R}^{k \times k}$. Then the geodesic $\gamma$ connecting $Q_0$ to $Q_1$ is 
\[
\gamma (t) = V_0 \exp \left( \frac{t}{2}\begin{bmatrix}
0 & -B \\
B^\tp & 0
\end{bmatrix} \right) I_{k,n-k} \exp \left( \frac{t}{2}\begin{bmatrix}
0 & B \\
-B^\tp & 0
\end{bmatrix} \right) V_0^\tp,
\]
where $B =-2 U [\Theta,0] V^\tp \in \mathbb{R}^{k \times (n-k)}$ with $0 \in \mathbb{R}^{k \times (n-2k)}$.
\end{proposition}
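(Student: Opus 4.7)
The plan is to leverage Theorem~\ref{thm:geo} to reduce the problem to a single matrix identity at $t = 1$. The curve $\gamma(t)$ written in the statement has exactly the form \eqref{eq:geo} with eigenbasis $V_0$ and tangent parameter $B$, so it is automatically a geodesic with $\gamma(0) = Q_0$ (which follows from $\exp(0) = I$). The entire content of the proposition therefore reduces to verifying that, for the specific choice $B = -2U[\Theta,0]V^\tp$, one has $\gamma(1) = Q_1$.

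The main computation is the evaluation of the two exponentials in $\gamma(1)$. I would observe the factorization
\[
\tfrac{1}{2}\begin{bmatrix} 0 & -B \\ B^\tp & 0 \end{bmatrix} = \begin{bmatrix} U & 0 \\ 0 & V \end{bmatrix}\begin{bmatrix} 0 & \Theta & 0 \\ -\Theta & 0 & 0 \\ 0 & 0 & 0 \end{bmatrix}\begin{bmatrix} U & 0 \\ 0 & V \end{bmatrix}^\tp,
\]
so conjugation by $\diag(U,V)$ reduces the exponential to that of a block skew-symmetric matrix with a diagonal $\Theta$. Since $\Theta$ commutes with itself, the leading $2\times 2$ block $\bigl[\begin{smallmatrix}0 & \Theta \\ -\Theta & 0\end{smallmatrix}\bigr]$ exponentiates cleanly to $\bigl[\begin{smallmatrix}\cos\Theta & \sin\Theta \\ -\sin\Theta & \cos\Theta\end{smallmatrix}\bigr]$, and padding by the trailing identity block produces precisely the middle factor $A$ of the \textsc{cs} decomposition \eqref{eq:csd}. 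Hence the first exponential equals $\diag(U,V)\,A\,\diag(U,V)^\tp$, while the second exponential (whose exponent is the negative of the first) equals $\diag(U,V)\,A^\tp\,\diag(U,V)^\tp$.

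Substituting into $\gamma(1)$ and using the key commutation identity $\diag(U,V)^\tp I_{k,n-k}\,\diag(U,V) = I_{k,n-k}$, which holds because $\diag(U,V)$ respects the same block partition $k + (n-k)$ as $I_{k,n-k}$, I obtain
\[
\gamma(1) = V_0\,\diag(U,V)\,A\,I_{k,n-k}\,A^\tp\,\diag(U,V)^\tp\,V_0^\tp.
\]
On the other hand, \eqref{eq:csd} yields $V_1 = V_0\,\diag(U,V)\,A\,\diag(W,Z)^\tp$, and applying the analogous identity $\diag(W,Z)^\tp I_{k,n-k}\,\diag(W,Z) = I_{k,n-k}$ simplifies $Q_1 = V_1 I_{k,n-k} V_1^\tp$ to exactly the same expression.

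There is no serious conceptual obstacle here; the only subtlety is recognizing that the prescription $B = -2U[\Theta,0]V^\tp$ has been engineered precisely so that the factor $\tfrac{t}{2}$ in \eqref{eq:geo} absorbs the $2$ and reproduces the \textsc{cs} rotation angles $\theta_i$ at $t = 1$. Everything else is careful block-matrix bookkeeping, with the two commutation identities above doing all the real work.
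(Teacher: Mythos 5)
Your proposal is correct and follows essentially the same route as the paper's proof: invoke Theorem~\ref{thm:geo} to see the curve is a geodesic through $Q_0$, then verify $\gamma(1)=Q_1$ by conjugating the exponent by $\diag(U,V)$ so the exponentials become the middle rotation factor of the \textsc{cs} decomposition, and absorb $\diag(W,Z)$ using $\diag(W,Z)^\tp I_{k,n-k}\diag(W,Z)=I_{k,n-k}$. The only difference is cosmetic bookkeeping (you simplify $\gamma(1)$ and $Q_1$ to a common expression, while the paper rewrites $\gamma(1)$ directly as $V_0(V_0^\tp V_1)I_{k,n-k}(V_0^\tp V_1)^\tp V_0^\tp$), so no substantive comparison is needed.
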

\begin{proof}
By Theorem~\ref{thm:geo}, $\gamma$ is a geodesic curve emanating from $\gamma(0) =  V_0 I_{k,n-k} V_0^\tp = Q_0$. It remains to verify that
\[
\gamma(1) =V_0 \exp \left(\frac{1}{2} \begin{bmatrix}
0 &  -B \\
B^\tp & 0
\end{bmatrix} \right) I_{k,n-k} \exp \left( \frac{1}{2}\begin{bmatrix}
0 & B \\
-B^\tp & 0
\end{bmatrix} \right) V_0^\tp = Q_1,
\]
when $B =-2 U [\Theta,0] V^\tp $. Substituting the expression for $B$,
\[
\begin{adjustbox}{width=\textwidth,totalheight=\textheight,keepaspectratio}
$
\begin{aligned}
\gamma(1) &=V_0 \begin{bmatrix}
U & 0 \\
0 & V
\end{bmatrix} 
\exp \left( \begin{bmatrix}
0 &  \Theta & 0  \\
- \Theta & 0 & 0 \\
0 & 0 & 0
\end{bmatrix} \right) I_{k,n-k} \exp \left( \begin{bmatrix}
0 & - \Theta & 0 \\
 \Theta & 0 & 0 \\
 0 & 0 & 0
\end{bmatrix} \right)
\begin{bmatrix}
U^\tp & 0 \\
0 & V^\tp
\end{bmatrix} V_0^\tp       \\
& =V_0 \begin{bmatrix}
U & 0 \\
0 & V
\end{bmatrix} 
\begin{bmatrix}
\cos \Theta & \sin \Theta & 0  \\
-\sin \Theta & \cos \Theta & 0 \\
0 & 0 & I_{n-2k}
\end{bmatrix} I_{k,n-k} \begin{bmatrix}
\cos \Theta & -\sin \Theta & 0 \\
\sin \Theta & \cos \Theta & 0 \\
 0 & 0 & I_{n-k}
\end{bmatrix}
\begin{bmatrix}
U^\tp & 0 \\
0 & V^\tp
\end{bmatrix} V_0^\tp       \\
& =V_0 \begin{bmatrix}
U & 0 \\
0 & V
\end{bmatrix} 
\begin{bmatrix}
\cos \Theta & \sin \Theta & 0  \\
-\sin \Theta & \cos \Theta & 0 \\
0 & 0 & I_{n-2k}
\end{bmatrix}
\begin{bmatrix}
W^\tp & 0 \\
0 & Z^\tp
\end{bmatrix} I_{k,n-k}
\begin{bmatrix}
W & 0 \\
0 & Z
\end{bmatrix}
\begin{bmatrix}
\cos \Theta & -\sin \Theta & 0 \\
\sin \Theta & \cos \Theta & 0 \\
 0 & 0 & I_{n-k}
\end{bmatrix}
\begin{bmatrix}
U^\tp & 0 \\
0 & V^\tp
\end{bmatrix} V_0^\tp
\end{aligned}
$
\end{adjustbox}
\]
where the last equality holds because we have
\[
I_{k,n-k} = \begin{bmatrix}
W^\tp & 0 \\
0 & Z^\tp
\end{bmatrix} I_{k,n-k}
\begin{bmatrix}
W & 0 \\
0 & Z
\end{bmatrix}
\]
whenever $W \in \O(k)$ and $Z \in \O(n-k)$. By \eqref{eq:csd}, the last expression of $\gamma(1)$ equals
\[
V_0 (V_0^\tp V_1) I_{k,n-k} (V_0^\tp V_1)^\tp V_0^\tp = V_1 I_{k,n-k} V_1^\tp = Q_1. \qedhere
\]
\end{proof}
The geodesic expression in Proposition~\ref{prop:geo} requires a \textsc{cs} decomposition \cite{GNS, Stewart2} and is more expensive to evaluate than the one in Theorem~\ref{thm:geo}. Nevertheless, we do not need Proposition~\ref{prop:geo} for our optimization algorithms in Section~\ref{sec:algo}, although its next corollary could be useful if one wants to design proximal gradient methods in the involution model.
\begin{corollary}[Geodesic distance]\label{cor:geodist}
The geodesic distance between $Q_0, Q_1 \in \Gr(k,n)$ is given by
\begin{equation}\label{eq:geodist}
d(Q_0,Q_1) =2\sqrt{2} \Bigl( \sum\nolimits_{i=1}^k  \sigma_i( B )^2 \Bigr)^{1/2} =2\sqrt{2} \Bigl( \sum\nolimits_{i=1}^k  \theta_i \Bigr)^{1/2}
\end{equation}
where $B \in \mathbb{R}^{k \times (n-k)}$ and $\Theta \in \mathbb{R}^{k \times k}$ are as in Proposition~\ref{prop:geo}.
\end{corollary}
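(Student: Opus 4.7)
My plan is to read off the geodesic distance directly from the explicit length-minimizing geodesic $\gamma$ furnished by Proposition~\ref{prop:geo}, combined with the closed-form arc length expression \eqref{eq:arc} of Corollary~\ref{cor:exp}. Since Proposition~\ref{prop:geo} identifies $\gamma$ as a length-minimizing geodesic from $Q_0$ to $Q_1$ whose initial tangent vector $X \in \T_{Q_0}\Gr(k,n)$ corresponds to the block $B = -2U[\Theta,0]V^\tp \in \mathbb{R}^{k \times (n-k)}$, we immediately have $d(Q_0,Q_1) = L(\gamma) = \sqrt{2}\lVert B \rVert_\F$ from \eqref{eq:arc}.

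The remaining task is to express $\lVert B \rVert_\F$ in the two forms displayed in \eqref{eq:geodist}. Using the orthogonal invariance of the Frobenius norm under left- and right-multiplication by $U \in \O(k)$ and $V \in \O(n-k)$ respectively, I obtain
\[
\lVert B \rVert_\F^2 = 4 \lVert [\Theta,0] \rVert_\F^2 = 4 \sum_{i=1}^k \theta_i^2.
\]
Moreover, the factorization $B = -2U[\Theta,0]V^\tp$ essentially \emph{is} a singular value decomposition of $B$ up to sign and zero-padding, so the nonzero singular values of $B$ are precisely $\{2\theta_i : 1 \le i \le k\}$ and therefore $\sum_{i=1}^k \sigma_i(B)^2 = 4\sum_{i=1}^k \theta_i^2$ as well. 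Substituting either identity into $L(\gamma) = \sqrt{2}\lVert B \rVert_\F$ gives both halves of \eqref{eq:geodist}.

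The one point that warrants scrutiny is that $\gamma$ is indeed length-\emph{minimizing}, rather than merely \emph{some} geodesic connecting $Q_0$ to $Q_1$ (there are infinitely many other critical curves of the length functional, obtained by adding multiples of $\pi$ to entries of $\Theta$). This is essentially built into Proposition~\ref{prop:geo}: the CS angles $\theta_i \in [0,\pi/2]$ in \eqref{eq:csd} coincide with the principal angles between the subspaces $\im(Y_{Q_0})$ and $\im(Y_{Q_1})$, and these lie within the injectivity radius of the Grassmannian exponential map. If a self-contained argument is preferred, one may compare against any competing geodesic $\gamma'$ parametrized via Proposition~\ref{prop:curve} by a curve $B'(t)$ in $\mathbb{R}^{k \times (n-k)}$ with $B'(0)=0$ and $\gamma'(1) = Q_1$, and use the characterization of principal angles as the smallest angles realized between bases of the two subspaces to conclude $L(\gamma') \ge \sqrt{2}\lVert B \rVert_\F$. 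Once minimality is in hand, the corollary is a one-line substitution, so minimality is the only real obstacle and it is already handled upstream.
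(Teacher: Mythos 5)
Your proposal is correct and follows essentially the same route as the paper: the paper's proof is the one-liner $L(\gamma)=\sqrt{2}\lVert B\rVert_\F = 2\sqrt{2}\lVert\Theta\rVert_\F$ obtained by substituting $B=-2U[\Theta,0]V^\tp$ from Proposition~\ref{prop:geo} into the arc-length formula \eqref{eq:arc}. Your additional observations --- that the nonzero singular values of $B$ are $2\theta_i$ and that minimality is inherited from Proposition~\ref{prop:geo} (the principal angles $\theta_i\in[0,\pi/2]$ staying within the injectivity radius) --- merely make explicit what the paper leaves implicit, and your computed value $2\sqrt{2}\bigl(\sum_{i=1}^k\theta_i^2\bigr)^{1/2}$ agrees with the paper's proof.
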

\begin{proof}
By \eqref{eq:arc}, $L(\gamma) = \sqrt{2} \lVert B \rVert_\F = 2\sqrt{2}\lVert \Theta \rVert_\F$ with $B = -2 U [\Theta,0] V^\tp$ as in Proposition~\ref{prop:geo}.
\end{proof}
The last expression in \eqref{eq:geodist} differs from the expression in \cite[Section~4.3]{EAS} by a factor of $2\sqrt{2}$, which is exactly what we expect since the metrics in the involution and orthogonal models differ by a factor of $(2\sqrt{2})^2 = 8$, as we saw in the proof of Proposition~\ref{prop:isometric}.

The notion of a logarithmic map is somewhat less standard and we remind readers of its definition. Given a Riemannian manifold $M$ and a point $x\in M$, there exists some $r >0$ such that the exponential map $\exp_x: \B_r (0) \to M$ is a diffeomorphism on the ball $\B_r (0) \subseteq \T_x M$  of radius $r$ centered at the origin \cite[Theorem~3.7]{doCarmo}. The \emph{logarithm map}, sometimes called the \emph{inverse exponential map}, is then defined on the diffeomorphic image $\exp_x \bigl(\B_r(0)\bigr) \subseteq M$ by
\[
\log_x: \exp_x \bigl(\B_r(0)\bigr) \to \T_x M,\quad \log_x (v) \coloneqq \exp_x^{-1}(v)
\]
for all $v \in \exp_x \bigl(\B_r(0)\bigr)$.
The largest $r$ so that $\exp_x$ is a diffeomorphism on $\B_r (0)$ is the \emph{injectivity radius} at $x$ and its infimum over all $x \in M$ is the injectivity radius of $M$.
\begin{corollary}[Logarithmic map]\label{cor:log}
Let $Q_0, Q_1  \in \Gr(k,n)$ be such that $d(Q_0,Q_1) < \sqrt{2}\pi$. Let $V_0, V_1 \in \O(n)$, and $B \in \mathbb{R}^{k \times (n-k)}$ be as in Proposition~\ref{prop:geo}. The logarithmic map at $Q_0$ of $Q_1$ is
\[
\log_{Q_0} (Q_1) = V_0 \begin{bmatrix}
0 & -B \\
B^\tp & 0
\end{bmatrix} V^\tp_0.
\]
\end{corollary}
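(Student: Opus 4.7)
The plan is to read off $\log_{Q_0}(Q_1)$ as the initial velocity of the length--minimizing geodesic already constructed in Proposition~\ref{prop:geo}, and then invoke the distance hypothesis to rule out the cut locus. Concretely, I would take the geodesic $\gamma(t)$ from Proposition~\ref{prop:geo} with $\gamma(0) = Q_0$ and $\gamma(1) = Q_1$, differentiate the closed--form expression $V_0 \exp(\tfrac{t}{2}\Lambda) I_{k,n-k} \exp(-\tfrac{t}{2}\Lambda) V_0^\tp$ at $t = 0$, and simplify the resulting commutator with $I_{k,n-k}$ (exactly the calculation already carried out in the proof of Theorem~\ref{thm:geo}). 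This produces a tangent vector $X \in \T_{Q_0}\Gr(k,n)$ matching the stated formula, and substituting $X$ into Corollary~\ref{cor:exp} recovers $\gamma(1)$, so $\exp_{Q_0}(X) = Q_1$ and $X$ is a preimage of $Q_1$ under $\exp_{Q_0}$.

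Next, I would argue that the hypothesis $d(Q_0,Q_1) < \sqrt{2}\pi$ places $X$ strictly inside the injectivity radius of $\exp_{Q_0}$, so that $\log_{Q_0}(Q_1)$ is well--defined and must equal $X$. By Corollary~\ref{cor:geodist} the distance bound is equivalent to $\lVert\Theta\rVert_\F < \pi/2$ for the diagonal matrix $\Theta = \diag(\theta_1,\dots,\theta_k)$ of principal angles from the \textsc{cs} decomposition in Proposition~\ref{prop:geo}, whose entries lie in $[0,\pi/2]$; strict inequality forces every $\theta_i < \pi/2$ and keeps us off the cut locus. To see that $\sqrt{2}\pi$ is indeed the injectivity radius of $\Gr(k,n)$ in our metric \eqref{eq:metric}, the cleanest route is Proposition~\ref{prop:isometric}: the classical Edelman--Arias--Smith injectivity radius $\pi/2$ rescales by the isometry factor $2\sqrt{2}$ derived there to give $\sqrt{2}\pi$. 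Alternatively, one can check directly that at $\theta_i = \pi/2$ a second length--minimizing geodesic appears, by flipping the sign of the corresponding column pair in the \textsc{cs} factors, so uniqueness must break precisely on the boundary $\lVert\Theta\rVert_\F = \pi/2$.

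The main obstacle will be this injectivity--radius step. The differentiation of $\gamma$ and the matching against $\exp_{Q_0}$ are essentially routine once the geodesic formula in Proposition~\ref{prop:geo} is in hand; by contrast, confirming that $\exp_{Q_0}$ is a diffeomorphism on the entire open ball of radius $\sqrt{2}\pi$ in $\T_{Q_0}\Gr(k,n)$, rather than only a local diffeomorphism at the origin, requires the cut--locus analysis sketched above. Once that is secured, the corollary follows immediately from the exp/log duality.
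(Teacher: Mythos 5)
Your proposal is correct and takes essentially the same route as the paper: the paper likewise reads off the formula from Proposition~\ref{prop:geo} together with Corollary~\ref{cor:exp}, and justifies well-definedness by the known injectivity radius of the Grassmannian (cited from Wong and rescaled to the metric \eqref{eq:metric}, i.e., to $\sqrt{2}\pi$), which is precisely your Proposition~\ref{prop:isometric} rescaling argument. The only quibble is in your optional aside: the cut locus consists of the points where some individual principal angle $\theta_i$ equals $\pi/2$, not the entire sphere $\lVert \Theta \rVert_\F = \pi/2$, though this imprecision does not affect the conclusion since $d(Q_0,Q_1)<\sqrt{2}\pi$ still forces every $\theta_i<\pi/2$.
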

\begin{proof}
The injectivity radius of $\Gr(k,n)$ is well known to be $\pi/2$ \cite{Wong}. Write $\B_r(0) = \{ X \in \T_{Q_0} \Gr(k,n) : \lVert X \rVert_Q < r\}$ and $\B_r^d (Q_0) = \{ Q \in  \Gr(k,n) : d(Q_0,Q) < r\}$. By Corollaries~\ref{cor:exp} and \ref{cor:geodist},
\[
\exp_{Q_0} \bigl(\B_{\pi/2} (0)\bigr) = \B_{\sqrt{2} \pi}^d (Q_0).
\]
By Corollary~\ref{cor:exp} and Proposition~\ref{prop:geo},  $\log_{Q_0} : \B_{\sqrt{2} \pi} (Q_0) \to \Gr(k,n)$  has the required expression.
\end{proof}

We end this section with the expression for the parallel transport of a vector $Y$ along a geodesic $\gamma$ at a point $Q$ in the direction $X$. This will be an essential ingredient for conjugate gradient and Newton methods in the involution model (see Algorithms~\ref{alg:nt} and \ref{alg:cg}).
\begin{proposition}[Parallel transport]\label{prop:pt}
Let $Q\in \Gr(k,n)$ and $X,Y \in \T_Q \Gr(k,n)$ with
\[
Q = V I_{k,n-k} V^\tp, \qquad
X = V\begin{bmatrix}
0 & B \\
B^\tp & 0
\end{bmatrix}V^\tp,
\qquad
Y = V\begin{bmatrix}
0 & C \\
C^\tp & 0
\end{bmatrix}V^\tp,
\]
where $V \in \O(n)$ and $B, C\in \mathbb{R}^{k \times (n-k)}$. 
Let $\gamma$ be a geodesic curve emanating from $Q$ in the direction $X$. Then the parallel transport of $Y$ along $\gamma$ is 
\begin{equation}\label{eq:pt}
Y (t) =V 
\exp \left( \frac{t}{2}\begin{bmatrix}
0 & -B \\
B^\tp & 0
\end{bmatrix}\right) \begin{bmatrix}
0 & C \\
C^\tp & 0
\end{bmatrix} \exp \left(\frac{t}{2} \begin{bmatrix}
0 & B \\
-B^\tp & 0
\end{bmatrix} \right) V^\tp.
\end{equation}
\end{proposition}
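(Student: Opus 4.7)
The plan is to verify the three defining properties of parallel transport along $\gamma$: that $Y(t)$ stays tangent to $\Gr(k,n)$, that $Y(0)=Y$, and that the covariant derivative of $Y(t)$ along $\gamma$ vanishes. Because $\Gr(k,n)$ is a Riemannian submanifold of $\mathbb{R}^{n\times n}$ with the inherited Frobenius metric (the metric in \eqref{eq:metric} coincides with $\tr(X^\tp Y)$ on symmetric $X, Y$), the Levi--Civita covariant derivative along $\gamma$ is $\nabla_{\dot\gamma(t)}Y(t) = \proj^{\T}_{\gamma(t)}(\dot Y(t))$ with the projection taken from the ambient $\mathbb{R}^{n\times n}$. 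So the third property reduces to showing that $\dot Y(t)$ is Frobenius-orthogonal to every tangent vector at $\gamma(t)$.

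First I would repackage the formulas. Set $A \coloneqq \begin{bsmallmatrix}0 & -B \\ B^\tp & 0\end{bsmallmatrix}$ (skew-symmetric), $\tilde Y \coloneqq \begin{bsmallmatrix}0 & C \\ C^\tp & 0\end{bsmallmatrix}$ (symmetric), and $W(t) \coloneqq V\exp\bigl(\tfrac{t}{2}A\bigr) \in \O(n)$. Then Theorem~\ref{thm:geo} gives $\gamma(t) = W(t)\,I_{k,n-k}\,W(t)^\tp$ and formula \eqref{eq:pt} becomes simply $Y(t) = W(t)\,\tilde Y\,W(t)^\tp$. Tangency at $\gamma(t)$ follows from the implicit characterization \eqref{eq:tan1}: $Y(t)$ is symmetric and traceless by construction, and it anti-commutes with $\gamma(t)$ because $\tilde Y I_{k,n-k} + I_{k,n-k}\tilde Y = 0$ by a direct block computation. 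The initial condition $Y(0) = V\tilde Y V^\tp = Y$ is immediate.

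For the orthogonality step, I would differentiate using $\dot W(t) = \tfrac{1}{2}W(t)A$ (valid because $A$ commutes with $\exp(tA/2)$) and $\dot W(t)^\tp = -\tfrac{1}{2}AW(t)^\tp$, obtaining
\[
\dot Y(t) = \tfrac{1}{2}W(t)[A,\tilde Y]W(t)^\tp.
\]
A direct block multiplication then gives
\[
[A,\tilde Y] = \begin{bmatrix} -BC^\tp - CB^\tp & 0 \\ 0 & B^\tp C + C^\tp B \end{bmatrix},
\]
which is block diagonal with the $(k,n-k)$-partition. By Proposition~\ref{prop:tangent}, an arbitrary tangent vector at $\gamma(t)$ has the form $X'' = W(t)\begin{bsmallmatrix}0 & B'' \\ {B''}^\tp & 0\end{bsmallmatrix}W(t)^\tp$, which is block off-diagonal in the same partition. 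Since $W(t)$ is orthogonal, $\tr(\dot Y(t)^\tp X'')$ reduces to the trace of the product of a block-diagonal and a block-off-diagonal matrix, hence vanishes. This delivers $\proj^{\T}_{\gamma(t)}(\dot Y(t)) = 0$, and uniqueness of parallel transport along $\gamma$ identifies $Y(t)$ with the parallel transport of $Y$.

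The main obstacle is conceptual rather than computational: one has to identify the correct ambient normal space. The description in Proposition~\ref{prop:normal} treats $\Gr(k,n)$ as a submanifold of $\O(n)$, but for the submanifold formula $\nabla_{\dot\gamma}Y = \proj^{\T}(\dot Y)$ we need the normal complement inside $\mathbb{R}^{n\times n}$, and the block-diagonal form of $[A,\tilde Y]$ lands there automatically. Once this viewpoint is adopted, the rest is the mechanical block-matrix computation above.
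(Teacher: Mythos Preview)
Your proof is correct and follows essentially the same route as the paper. Both arguments differentiate the candidate field, obtain the block-diagonal matrix $\begin{bsmallmatrix}-BC^\tp - CB^\tp & 0\\ 0 & B^\tp C + C^\tp B\end{bsmallmatrix}$ in the moving frame $W(t)$, and observe that its tangential projection (the off-diagonal blocks) vanishes; the only cosmetic difference is that the paper begins with a general ansatz $C(t)$ and deduces $\dot C(t)=0$, whereas you verify the given closed form directly and appeal to uniqueness of parallel transport. Your remark that the relevant projection should be taken from the ambient $\mathbb{R}^{n\times n}$ rather than from $\T_{\gamma(t)}\O(n)$ is well placed, since $\dot Y(t)$ is symmetric and hence not tangent to $\O(n)$.
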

\begin{proof}
Let $\gamma$ be parametrized as in \eqref{eq:geo}. A vector field $Y(t)$ that is parallel along $\gamma(t)$ may, by \eqref{eq:tanvec}, be written in the form
\[
Y(t) = V 
\exp \left( \frac{t}{2}\begin{bmatrix}
0 & -B \\
B^\tp & 0
\end{bmatrix}\right) \begin{bmatrix}
0 & C(t) \\
C(t)^\tp & 0
\end{bmatrix} \exp \left( \frac{t}{2}\begin{bmatrix}
0 & B \\
-B^\tp & 0
\end{bmatrix} \right) V^\tp
\]
for some curve $C(t)$ in $\mathbb{R}^{k \times (n-k)}$ with $C(0) = C$.
Differentiating $Y(t)$ gives
\[
\begin{adjustbox}{width=\textwidth,totalheight=\textheight,keepaspectratio}
$
\dot{Y}(t) = V 
\exp \biggl(\dfrac{t}{2} \begin{bmatrix}
0 & -B \\
B^\tp & 0
\end{bmatrix}\biggr) 
\begin{bmatrix}
- \frac{1}{2} \bigl(BC(t)^\tp + C(t)B^\tp\bigr) & \dot{C}(t)  \\
\dot{C}(t)^\tp & \frac{1}{2}\bigl(B^\tp C(t) + C(t)^\tp B\bigr)
\end{bmatrix}
\exp \biggl( \dfrac{t}{2} \begin{bmatrix}
0 & B \\
-B^\tp & 0
\end{bmatrix} \biggr) V^\tp.
$
\end{adjustbox}
\]
Since $Y(t)$ is parallel along $\gamma(t)$, we must have 
\[
\proj^{\T}_{\gamma(t)} \bigl(\dot{Y}(t)\bigr) = 0,
\]
which implies that $\dot{C}(t) = 0$ and thus $C(t) = C(0) = C$, giving us \eqref{eq:pt}. 
\end{proof}
A word about our notation for parallel transport, or rather, the lack of one. Note that $Y(t)$ depends on $\gamma$ and to indicate this dependence, we may write $Y_\gamma(t)$. Other common notations include $\tau_t Y$ \cite{Helgason1978}, $P^\gamma_t Y$ \cite{Jost}, $\gamma^t_s(Y)$ \cite{KN2} ($s=0$ for us) but there is no single standard notation.

\section{Gradient and Hessian}\label{sec:gradHess}

We now derive expressions for the Riemannian gradient and Hessian of a $C^2$ function $f : \Gr(k,n)\to \mathbb{R}$ in the involution model with \eqref{eq:tan2} for tangent vectors. As a reminder, this means:
\begin{equation}\label{eq:remind}
\begin{aligned}
\Gr(k,n) &= \{Q \in \mathbb{R}^{n \times n}  : Q^\tp Q = I,\; Q^\tp = Q, \; \tr(Q) = 2k -n\},\\
\T_Q \Gr(k,n) &= \Bigl\lbrace
V \begin{bmatrix}
0 & B \\
B^\tp & 0
\end{bmatrix}V^\tp  \in \mathbb{R}^{n \times n} : B \in \mathbb{R}^{k\times (n-k)}
\Bigr\rbrace,
\end{aligned}
\end{equation}
where $Q = V I_{k,n-k} V^\tp$.

Let $Q \in \Gr(k,n)$. Then the \emph{Riemannian gradient} $\grad f$ at $Q$ is a tangent vector $\grad f(Q) \in \T_Q \Gr(k,n)$ and, depending on context,  the \emph{Riemannian Hessian} at  $Q$ is a bilinear map:
\[
\Hess f (Q) :  \T_Q \Gr(k,n) \times \T_Q \Gr(k,n) \to \mathbb{R}.
\]

\begin{proposition}[Riemannian gradient I]\label{prop:grad}
Let $f : \Gr(k,n)\to \mathbb{R}$ be $C^1$. For any $Q \in \Gr(k,n)$, write
\begin{equation}\label{eq:fQ}
f_Q \coloneqq \biggl[\frac{\partial f}{\partial q_{ij}} (Q) \biggr]_{i,j=1}^n \in \mathbb{R}^{n \times n}.
\end{equation}
Then
\begin{equation}\label{eq:grad}
\grad f(Q) =  \frac{1}{4}\bigl[ f_Q + f_Q^\tp - Q (f_Q + f_Q^\tp) Q \bigr].
\end{equation}
\end{proposition}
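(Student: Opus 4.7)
The plan is to recover $\grad f(Q)$ as the orthogonal projection onto $\T_Q \Gr(k,n)$ of a suitable symmetrization of the ambient Euclidean gradient $f_Q$, using the implicit characterization \eqref{eq:tan1} of the tangent space together with the involution identity $Q^2 = I$.

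First I would use the defining property that $\grad f(Q)$ is the unique tangent vector satisfying $\langle \grad f(Q), X \rangle_Q = Df(Q)[X]$ for every $X \in \T_Q \Gr(k,n)$. Reading $Df(Q)[X] = \tr(f_Q^\tp X)$ off the ambient partial derivatives and using that tangent vectors are symmetric (so $\tr(f_Q^\tp X) = \tr(f_Q X)$), this condition becomes
\[
\tr(G X) = \tr(S X) \quad \text{for all } X \in \T_Q \Gr(k,n),
\]
where $S \coloneqq \tfrac{1}{2}(f_Q + f_Q^\tp)$ and I have used the Riemannian metric $\langle X, Y \rangle_Q = \tr(XY)$ of Proposition~\ref{prop:metric}. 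Thus the task reduces to computing the Frobenius-orthogonal projection of the symmetric matrix $S$ onto the linear subspace $\T_Q \Gr(k,n) \subset \mathbb{R}^{n \times n}$.

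Next I would propose, and then verify, the closed-form projection $P_{\T_Q}(S) = \tfrac{1}{2}(S - QSQ)$. Using $Q^\tp = Q$ and $Q^2 = I$, a short check shows that $\tfrac{1}{2}(S - QSQ)$ is symmetric, anticommutes with $Q$, and has vanishing trace (since $\tr(QSQ) = \tr(Q^2 S) = \tr S$), and therefore lies in $\T_Q \Gr(k,n)$ by \eqref{eq:tan1}. For the orthogonality of the complementary summand, any $X \in \T_Q \Gr(k,n)$ satisfies $QXQ = -X$, and hence
\[
\tr\bigl((S + QSQ) X\bigr) = \tr(S X) + \tr(S\, QXQ) = \tr(SX) - \tr(SX) = 0,
\]
so $\tfrac{1}{2}(S + QSQ)$ is Frobenius-orthogonal to the tangent space.

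Substituting $S = \tfrac{1}{2}(f_Q + f_Q^\tp)$ into $\tfrac{1}{2}(S - QSQ)$ then gives \eqref{eq:grad} at once. The one conceptual step I would make sure not to skip is the preliminary symmetrization of $f_Q$: because $\T_Q \Gr(k,n)$ consists of symmetric matrices, only the symmetric part of the ambient gradient couples to tangent directions, so this symmetrization must be performed \emph{before} projecting, not after. Everything else reduces to two short identities powered by $Q^\tp = Q = Q^{-1}$, so I do not anticipate any serious obstacle.
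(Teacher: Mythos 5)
Your proposal is correct, and it arrives at \eqref{eq:grad} by a slightly different route than the paper. The paper factors the computation through the embedded geometry of $\Gr(k,n)\subseteq\O(n)$: it first projects $f_Q$ onto $\T_Q\O(n)$, obtaining $\tfrac12(f_Q-Qf_Q^\tp Q)$, and then composes with the projection $\proj^{\T}_Q:\T_Q\O(n)\to\T_Q\Gr(k,n)$ from Corollary~\ref{cor:projection}; the formula drops out of this two-stage composition, with the identification $\grad f(Q)=\proj^{\T}_Q\bigl(\text{Euclidean gradient}\bigr)$ left implicit. You instead project in one step inside $\mathbb{R}^{n\times n}$: you start from the variational characterization $\langle \grad f(Q),X\rangle_Q = \tr(f_Q^\tp X)$, observe that the metric \eqref{eq:metric} is just the Frobenius inner product on the (symmetric) tangent vectors, and then verify directly that $\tfrac12(S-QSQ)$ with $S=\tfrac12(f_Q+f_Q^\tp)$ lies in $\T_Q\Gr(k,n)$ via the implicit description \eqref{eq:tan1}, while $\tfrac12(S+QSQ)$ is Frobenius-orthogonal to it via $QXQ=-X$ and cyclicity of trace. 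What your argument buys is self-containedness and transparency: it does not invoke Corollary~\ref{cor:projection} or the intermediate space $\T_Q\O(n)$, and it spells out exactly why projecting the (symmetrized) Euclidean gradient gives the Riemannian gradient for the induced metric — a step the paper's terser proof assumes. What the paper's route buys is brevity and structural context, since it reuses the tangent--normal decomposition of $\T_Q\O(n)$ already established, making clear how the gradient formula sits inside the ambient Lie-group geometry. Your remark that the symmetrization of $f_Q$ must precede the conjugation map $A\mapsto\tfrac12(A-QAQ)$ is also well taken, since that map does not send nonsymmetric matrices into $\T_Q\Gr(k,n)$.
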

\begin{proof}
The projection of $QX \in \T_Q \mathbb{R}^{n \times n}$ to $\T_Q \O(n)$ is $Q(X-X^\tp)/2$. Therefore the projection of $f_Q \in \T_Q \mathbb{R}^{n \times n}$ to $\T_Q \O(n)$ is $(f_Q-Qf_Q^\tp Q)/2$. Composing this with the projection of $\T_Q \O(n)$ to $\T_Q \Gr(k,n)$ given in \eqref{eq:proj}, we get
\[
\grad f(Q) = \proj^{\T}_Q\biggl(\frac{f_Q-Qf_Q^\tp Q}{2}\biggr) = \frac{1}{4} \bigl(f_Q+f_Q^\tp - Q f_QQ-Q f_Q^\tp Q\bigr)
\]
as required.
\end{proof}

\begin{proposition}[Riemannian Hessian I]\label{prop:Hess}
Let $f : \Gr(k,n)\to \mathbb{R}$ be $C^2$. For any $Q = V I_{k,n-k} V^\tp \in \Gr(k,n)$, let $f_Q$ be as in \eqref{eq:fQ} and
\[
f_{QQ} (X)\coloneqq \biggl[ \sum_{i,j=1}^n \Bigl(\frac{\partial^2 f}{\partial q_{ij}\partial q_{kl}} (Q)\Bigr) x_{ij} \biggr]_{k,l=1}^n, \quad f_{QQ} (X, Y)\coloneqq \sum_{i,j, k,l=1}^n \Bigl( \frac{\partial^2 f}{\partial q_{ij}\partial q_{kl}} (Q) \Bigr) x_{ij} y_{kl}.
\]
As a bilinear map, the Hessian of $f$ at $Q$ is given by 
\begin{equation}\label{eq:Hess1}
\Hess f (Q) (X,Y) = f_{QQ} (X, Y) - \frac{1}{2} \tr \bigl( f_Q^\tp Q (XY + YX) \bigr)
\end{equation}
for any $X,Y \in  \T_Q \Gr(k,n)$.
\end{proposition}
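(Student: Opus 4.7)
The plan is to compute the quadratic form $\Hess f(Q)(X,X)$ by differentiating $f$ twice along the geodesic $\gamma$ with $\gamma(0)=Q$ and $\dot\gamma(0)=X$ provided by Theorem~\ref{thm:geo}, and then recover the full symmetric bilinear form by polarization. Since the Riemannian Hessian on a manifold is intrinsically characterized by $\Hess f(Q)(X,X) = \tfrac{d^2}{dt^2} f(\gamma(t))\big|_{t=0}$ for such a geodesic, I first apply the ordinary chain rule (treating $f$ as the restriction of a smooth function on the ambient $\mathbb{R}^{n\times n}$, which suffices since the Hessian is intrinsic and does not depend on the extension) to get
\[
\frac{d^2}{dt^2} f(\gamma(t)) = f_{QQ}\bigl(\dot\gamma(t),\dot\gamma(t)\bigr) + \tr\bigl(f_{\gamma(t)}^\tp \ddot\gamma(t)\bigr).
\]
Evaluating at $t = 0$ reduces the problem to identifying $\ddot\gamma(0)$.

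For this, I reuse equation~\eqref{eq:dotgamma} from the proof of Theorem~\ref{thm:geo}, which already gives a closed-form expression for $\ddot\gamma(t)$ in terms of $B(t)$, $\dot B(t)$, $\ddot B(t)$. Along the geodesic of Theorem~\ref{thm:geo} we have $B(t) = -tB/2$, so $\dot B(0) = -B/2$ and $\ddot B(t) \equiv 0$. The formula collapses at $t=0$ (where both exponentials reduce to $I$) to
\[
\ddot\gamma(0) = -V\begin{bmatrix} BB^\tp & 0 \\ 0 & -B^\tp B\end{bmatrix} V^\tp.
\]
A direct multiplication using $Q = V I_{k,n-k} V^\tp$ and $X = V\begin{bsmallmatrix} 0 & B \\ B^\tp & 0\end{bsmallmatrix} V^\tp$ shows that $X^2 = V\diag(BB^\tp,\, B^\tp B) V^\tp$ commutes with $Q$, so $QX^2 = X^2 Q = V\begin{bsmallmatrix} BB^\tp & 0 \\ 0 & -B^\tp B\end{bsmallmatrix} V^\tp$, and therefore $\ddot\gamma(0) = -QX^2 = -\tfrac{1}{2}(QX^2 + X^2 Q)$. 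Substituting back gives the quadratic form $\Hess f(Q)(X,X) = f_{QQ}(X,X) - \tr(f_Q^\tp Q X^2)$.

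Finally, I polarize via $\Hess f(Q)(X,Y) = \tfrac12 [\Hess f(Q)(X+Y,X+Y) - \Hess f(Q)(X,X) - \Hess f(Q)(Y,Y)]$. Bilinearity of $f_{QQ}$ (symmetric in its two arguments since mixed partials commute) produces $f_{QQ}(X,Y)$, while the identity $(X+Y)^2 = X^2 + XY + YX + Y^2$ makes the second term contribute $-\tfrac12 \tr\bigl(f_Q^\tp Q(XY+YX)\bigr)$, yielding exactly \eqref{eq:Hess1}. The only step that requires real work is the computation of $\ddot\gamma(0)$, and that is essentially already done in the proof of Theorem~\ref{thm:geo}; the remainder is bookkeeping.
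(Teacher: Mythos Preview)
Your proof is correct and follows essentially the same approach as the paper: compute the quadratic form $\Hess f(Q)(X,X)$ by differentiating $f$ twice along the geodesic of Theorem~\ref{thm:geo}, identify $\ddot\gamma(0)=-QX^2$ via the expression~\eqref{eq:dotgamma}, and then polarize. The only cosmetic difference is that the paper writes the intermediate scalar as $f_{QQ}(X)$ rather than $f_{QQ}(X,X)$.
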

\begin{proof}
Let $\gamma$ be a geodesic curve emanating from $Q$ in the direction $X\in \T_Q \Gr(k,n)$. Then
\[
\Hess f  (Q) (X,X) =
\frac{d^2 }{dt^2}f\bigl(\gamma(t)\bigr) \biggr\rvert_{t=0} = \frac{d}{dt} \tr\bigl(f_{\gamma(t)}^\tp \dot{\gamma}(t)\bigr)\biggr\rvert_{t=0} = f_{QQ} (X) + \tr\bigl(f^\tp_Q \ddot{\gamma}(0)\bigr).
\]
Since $\gamma(t)$ is given by \eqref{eq:geo},
\[
\ddot{\gamma}(0) = V \begin{bmatrix}
-B B^\tp  & 0 \\
0 & B^\tp B
\end{bmatrix} V^\tp = -Q  \dot{\gamma}(0)^2 
\]
and so
\[
\Hess f  (Q) (X,X) = f_{QQ} (X) - \tr(f_Q^\tp Q X^2).
\]
To obtain $\Hess f(Q)$ as a bilinear map, we simply polarize the quadratic form above:
\begin{align*}
\Hess f(Q) (X,Y) &= \frac{1}{2} \bigl[ \Hess f  (Q)(X + Y,X+Y) - \Hess f  (Q)(X,X) - \Hess f (Q) (Y,Y)  \bigr] \\
&= \frac{1}{2} \biggl[ f_{QQ} (X + Y) - f_{QQ}(X) - f_{QQ}(Y) - \tr \bigl( f_Q^\tp Q (XY + YX) \bigr) \biggr] \\
&= f_{QQ} (X, Y) - \frac{1}{2} \tr \bigl( f_Q^\tp Q (XY + YX) \bigr). \qedhere
\end{align*}
\end{proof}

Our optimization algorithms require that we parameterize our tangent space as in \eqref{eq:remind} and we need to express $\grad f(Q) $ in such a form. This can be easily accomplished. Let $E_{ij}\in \mathbb{R}^{k\times(n-k)}$ be the matrix whose $(i,j)$ entry is zero and other entries are one. Let
\begin{equation}\label{eq:basis}
X_{ij} \coloneqq  V \begin{bmatrix}
0 & E_{ij} \\
E_{ij}^\tp & 0
\end{bmatrix} V^\tp  \in \T_Q \Gr(k,n).
\end{equation}
Then  $\mathcal{B}_Q \coloneqq \{ X_{ij} : i=1,\dots, k, \; j=1,\dots, n-k\}$ is an orthogonal (but not orthonormal since Riemannian norm $\lVert X_{ij} \rVert_Q = 1/\sqrt{2}$) basis of $\T_Q \Gr(k,n)$.
\begin{corollary}[Riemannian gradient II]\label{cor:grad}
Let $f$, $Q$, $f_Q$ be as in Propositions~\ref{prop:grad}. If we  partition
\begin{equation}\label{eq:partition}
V^\tp (f_Q+f_Q^\tp) V=  \begin{bmatrix}
A & B \\
B^\tp & C
\end{bmatrix},
\end{equation}
where $A \in \mathbb{R}^{k \times k}$, $B \in \mathbb{R}^{k \times (n-k)}$, $C \in \mathbb{R}^{(n-k) \times (n-k)}$, then
\begin{equation}\label{eq:gradB}
\grad f(Q) = \frac{1}{2} V \begin{bmatrix}
0 & B \\
B^\tp & 0
\end{bmatrix}V^\tp.
\end{equation}
\end{corollary}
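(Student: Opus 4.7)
The plan is to combine Proposition~\ref{prop:grad} with the eigendecomposition $Q = V I_{k,n-k} V^\tp$ from Lemma~\ref{lem:1eig}, and then observe that conjugation by $I_{k,n-k}$ acts as a simple sign operator on a $2\times 2$ block matrix. The entire argument is a short direct computation rather than anything structural; the only thing to verify is that the diagonal blocks $A,C$ cancel out in the expression $f_Q + f_Q^\tp - Q(f_Q+f_Q^\tp)Q$ while the off-diagonal blocks $B, B^\tp$ double.

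\textbf{Step 1.} Start from the closed-form expression
\[
\grad f(Q) = \frac{1}{4}\bigl[ f_Q + f_Q^\tp - Q (f_Q + f_Q^\tp) Q \bigr]
\]
given in Proposition~\ref{prop:grad}. Write $M \coloneqq V^\tp(f_Q + f_Q^\tp)V$ so that $f_Q+f_Q^\tp = VMV^\tp$; by hypothesis $M$ has the block partition in \eqref{eq:partition}.

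\textbf{Step 2.} Substitute $Q = V I_{k,n-k} V^\tp$ and use $V^\tp V = I$ to get
\[
Q(f_Q+f_Q^\tp)Q = V I_{k,n-k}\, M\, I_{k,n-k} V^\tp.
\]
A direct block multiplication gives
\[
I_{k,n-k}\begin{bmatrix} A & B \\ B^\tp & C \end{bmatrix} I_{k,n-k}
= \begin{bmatrix} A & -B \\ -B^\tp & C \end{bmatrix},
\]
so
\[
M - I_{k,n-k} M I_{k,n-k} = \begin{bmatrix} 0 & 2B \\ 2B^\tp & 0 \end{bmatrix}.
\]

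\textbf{Step 3.} Combining the two previous steps,
\[
f_Q + f_Q^\tp - Q(f_Q+f_Q^\tp)Q = V\begin{bmatrix} 0 & 2B \\ 2B^\tp & 0 \end{bmatrix} V^\tp,
\]
and inserting this into the expression for $\grad f(Q)$ from Step~1 produces exactly \eqref{eq:gradB}. Note this automatically verifies that $\grad f(Q) \in \T_Q \Gr(k,n)$ in the explicit form \eqref{eq:tan2} of Proposition~\ref{prop:tangent}, which is what the corollary is really doing: repackaging the intrinsic formula \eqref{eq:grad} into the $B$-parametrization \eqref{eq:tanvec} used by the algorithms in Section~\ref{sec:algo}. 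There is no genuine obstacle here, only bookkeeping; the only thing worth double-checking is the factor of $\tfrac12$, which arises as $\tfrac14 \cdot 2$ from the $\tfrac14$ in \eqref{eq:grad} and the doubling of the off-diagonal block in Step~2.
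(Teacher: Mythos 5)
Your proposal is correct and coincides with the paper's own proof: both substitute $Q = V I_{k,n-k} V^\tp$ into the formula of Proposition~\ref{prop:grad}, note that conjugation by $I_{k,n-k}$ flips the sign of the off-diagonal blocks of $V^\tp(f_Q+f_Q^\tp)V$, and subtract so the diagonal blocks cancel and the off-diagonal blocks double, yielding the factor $\tfrac12$. No issues.
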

\begin{proof}
By \eqref{eq:partition}, we may rewrite \eqref{eq:grad} as 
\[
\grad f(Q)  = \frac{1}{4} \left( 
V \begin{bmatrix}
A & B \\
B^\tp & C
\end{bmatrix}V^\tp - V \begin{bmatrix}
A & -B \\
-B^\tp & C
\end{bmatrix}V^\tp
\right) = \frac{1}{2} V \begin{bmatrix}
0 & B \\
B^\tp & 0
\end{bmatrix}V^\tp. \qedhere
\]
\end{proof}
In our optimization algorithms, \eqref{eq:partition} is how we actually compute Riemannian gradients. Note that in the basis $\mathcal{B}_Q$, the gradient of $f$ is essentially given by the matrix $B/2 \in \mathbb{R}^{k \times (n-k)}$. So in algorithms that rely only on Riemannian gradients, we just need the top right block $B$, but the other blocks $A$ and $C$ would appear implicitly in the Riemannian Hessians.

We may order the basis $\mathcal{B}_Q$ lexicographically (note that $X_{ij}$'s are indexed by two indices), then the bilinear form  $\Hess f(Q)$ has the matrix representation
\begin{equation}\label{eq:Hessmatrix}
H_Q \coloneqq \begin{bmatrix} 
\Hess f(Q)(X_{11} , X_{11}) & \Hess f(Q)(X_{11} , X_{12}) & \dots & \Hess f(Q)(X_{11} , X_{k,n-k}) \\
\Hess f(Q)(X_{12} , X_{11}) & \Hess f(Q)(X_{12} , X_{12}) & \dots & \Hess f(Q)(X_{12} , X_{k,n-k})  \\
\vdots & \vdots & \ddots & \vdots\\
\Hess f(Q)(X_{k,n-k} , X_{11}) & \Hess f(Q)(X_{k,n-k} , X_{12}) & \dots & \Hess f(Q)(X_{k,n-k} , X_{k,n-k}) 
\end{bmatrix}.
\end{equation}
In practice, the evaluation of $H_Q$ may be simplified; we will discuss this in Section~\ref{sec:Rie}.
To summarize, in the lexicographically ordered basis $\mathcal{B}_Q$,
\[
\bigl[ \grad f(Q) \bigr]_{\mathcal{B}_Q} = \frac{1}{2}\vect(B)\in  \mathbb{R}^{k(n-k)}, \qquad \bigl[ \Hess f(Q) \bigr]_{\mathcal{B}_Q} = H_Q \in  \mathbb{R}^{k(n-k)\times k(n-k)},
\]
and the Newton step $S \in \mathbb{R}^{k \times (n -k)}$ is given by the linear system
\begin{equation}\label{eq:ns}
H_Q \vect(S) = -\frac{1}{2}\vect(B). 
\end{equation}

\section{Retraction map and vector transport}\label{sec:retr}

Up till this point, everything that we have discussed is authentic Riemannian geometry, even though we have used extrinsic coordinates to obtain expressions in terms of matrices and matrix operations. This section is a departure, we will discuss two notions created for sole use in manifold optimization \cite{AMS2009}: \emph{retraction maps} and \emph{vector transports}.  They are relaxations of exponential maps and parallel transports respectively and are intended to be pragmatic substitutes in situations where these Riemannian operations are either too difficult to compute (e.g., requiring the exponential of a nonnormal matrix) or unavailable in closed form (e.g., parallel transport on a Stiefel manifold). While the involution model does not suffer from either of these problems, retraction algorithms could still serve as a good option for initializing Riemannian optimization algorithms. 

As these definitions are not found in the Riemannian geometry literature, we state a version of \cite[Definitions~4.1.1 and 8.1.1]{AMS2009} below for easy reference.
\begin{definition}[Absil--Mahony--Sepulchre]\label{def:retr}
A map $R : \T M \to M$,  $(x, v) \mapsto R_x(v)$ is a \emph{retraction} map if it satisfies the following two conditions:
\begin{enumerate}[\upshape (a)]
\item $R_x(0) = x$ for all $x \in M$;
\item\label{it:b} $d R_x(0) : \T_x M \to \T_x M$ is the identity map for all $x \in M$.
\end{enumerate}
A map $T : \T M\oplus \T M \to \T M$ associated to a retraction map $R$ is a \emph{vector transport} if it satisfies the following three conditions:
\begin{enumerate}[\upshape (i)]
\item\label{it:compat} $T(x,v,w) =\bigl(R_x(v), T_{x, v}(w)\bigr)$ for all $x \in M$ and $v,w \in \T_x M$;

\item $ T_{x, 0}(w) = w$ for all $x \in M$ and $w \in \T_x M$;

\item $T_{x, v}(a_1w_1+a_2w_2) = a_1 T_{x, v}(w_1) + a_2 T_{x, v}(w_2)$ for all $a_1,a_2 \in \mathbb{R}$, $x \in M$, and $v,w_1,w_2 \in \T_{x} M$.
\end{enumerate}
\end{definition}
The condition \eqref{it:compat} says that the vector transport $T$ is compatible with its retraction map $R$, and also defines the map $T_{x,v} : \T_x M \to \T_x M$. Note that $v$ is the direction to move in  while $w$ is the vector to be transported.

For the purpose of optimization, we just need $R$ and $T$ to be well-defined on a neighbourhood of $M \cong \{(x, 0) \in \T M\} \subseteq \T M$ and $M \cong \{(x, 0, 0) \in \T M \oplus \T M  \}\subseteq \T M\oplus \T M$ respectively. If $R$ and $T$ are $C^1$ maps, then various optimization algorithms relying on $R$ and $T$  can be shown to converge \cite{AMS2009}, possibly under the additional assumption that $M$ has nonnegative \cite{DDO} or bounded sectional curvature \cite{SFF}. In particular, these results apply in our case since being a compact symmetric space, $\Gr(k ,n)$ has both nonnegative and bounded sectional curvature \cite{CE, Ziller}.

\begin{example}[Projection as retraction]
For a manifold $M$ embedded in Euclidean space $\mathbb{R}^n$ or $\mathbb{R}^{m \times n}$,  we may regard tangent vectors in $\T_x M$ to be of the form $x + v$. In this case an example of a retraction map is given by the projection of tangent vectors onto $M$,
\[
R_x (v) = \argmin_{y\in M}\, \lVert x+v-y\rVert,
\]
where $\lVert \, \cdot \, \rVert$ is either the $2$- or Frobenius norm.
By \cite[Lemma~3.1]{AM2012}, the map $R_x$ is well-defined for small $v$ and is a retraction.
\end{example}

We will give three retraction maps for $\Gr(k,n)$ that are readily computable in the involution model with \textsc{evd}, block \textsc{qr}, and Cayley transform respectively.  The latter two are inspired by similar maps defined for the projection model in \cite{HHT} although our motivations are somewhat different.

We begin by showing how one may compute the projection $\argmin \bigl\{ \lVert A-Q\rVert_\F : Q\in \Gr(k, n) \bigr\}$ for an arbitrary matrix $A \in \mathbb{R}^{n \times n}$ in the involution model, a result that may be of independent interest.
\begin{lemma}\label{lem:approx}
Let $A\in \mathbb{R}^{n\times n}$ and
\begin{equation}\label{eq:approx}
\frac{A+A^\tp}{2} = VDV^\tp
\end{equation}
be an eigendecomposition with $V \in \O(n)$ and $D = \diag(\lambda_1,\dots,\lambda_n)$, $\lambda_1 \ge  \dots \ge \lambda_n$. Then $Q = VI_{k, n-k}V^\tp$ is a minimizer of
\[
\min \bigl\{ \lVert A-Q\rVert_\F : Q^\tp Q = I, \; Q^\tp = Q, \; \tr(Q) = 2k -n \bigr\}.
\]
\end{lemma}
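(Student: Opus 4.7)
The plan is to reduce the constrained Frobenius minimization to an eigenvalue maximization on symmetric matrices, then invoke a classical trace inequality.

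First, I would observe that since every $Q$ in the feasible set is orthogonal, $\lVert Q\rVert_\F^2 = \tr(Q^\tp Q) = n$ is constant. Expanding the objective then gives
\[
\lVert A - Q\rVert_\F^2 = \lVert A\rVert_\F^2 + n - 2\tr(A^\tp Q),
\]
so the problem is equivalent to \emph{maximizing} $\tr(A^\tp Q)$ over the involution model. Next I would use $Q^\tp = Q$ to symmetrize $A$: since $\tr(A^\tp Q) = \tr((A^\tp Q)^\tp) = \tr(Q A)$, we obtain
\[
\tr(A^\tp Q) = \tfrac{1}{2}\tr\bigl((A+A^\tp)Q\bigr) = \tr(SQ),
\]
where $S = (A+A^\tp)/2$ is the symmetric part appearing in \eqref{eq:approx}. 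This removes any dependence on the skew-symmetric part of $A$ and reduces the problem to maximizing $\tr(SQ)$ over symmetric orthogonal $Q$ of trace $2k-n$.

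The heart of the argument is then a trace inequality. I would conjugate by $V$: set $\tilde{Q} \coloneqq V^\tp Q V$, which is again symmetric, orthogonal, and of trace $2k-n$ (so $\tilde{Q} \in \Gr(k,n)$), and write
\[
\tr(SQ) = \tr(D\tilde{Q}) = \sum_{i=1}^{n} \lambda_i \tilde{q}_{ii}.
\]
Now $\tilde Q$ has spectrum consisting of $k$ copies of $+1$ and $n-k$ copies of $-1$, so by the Schur--Horn theorem its diagonal $(\tilde q_{11},\dots,\tilde q_{nn})$ is majorized by $(\underbrace{1,\dots,1}_{k},\underbrace{-1,\dots,-1}_{n-k})$. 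Since the weights $\lambda_1 \ge \cdots \ge \lambda_n$ are in decreasing order, the rearrangement/majorization inequality yields
\[
\tr(SQ) \;\le\; \sum_{i=1}^{k}\lambda_i - \sum_{i=k+1}^{n}\lambda_i.
\]
(Equivalently, one may cite von Neumann's trace inequality directly for the two symmetric matrices $S$ and $Q$.)

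Finally I would verify that the candidate $Q^\star = V I_{k,n-k} V^\tp$ attains this upper bound: a direct computation gives
\[
\tr(S Q^\star) = \tr(V D V^\tp \cdot V I_{k,n-k} V^\tp) = \tr(D\, I_{k,n-k}) = \sum_{i=1}^k \lambda_i - \sum_{i=k+1}^n \lambda_i,
\]
so $Q^\star$ maximizes $\tr(SQ)$ and therefore minimizes $\lVert A - Q\rVert_\F$. The main (small) obstacle is the majorization/trace-inequality step; everything else is routine algebraic manipulation. Note the proof also makes transparent that uniqueness fails precisely when $\lambda_k = \lambda_{k+1}$, since then any rotation inside the degenerate eigenspace of $S$ produces another minimizer.
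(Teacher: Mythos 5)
Your proof is correct, and it reaches the result by a slightly different route than the paper. The paper splits off the skew part via the Pythagorean identity $\lVert A-Q\rVert_\F^2 = \lVert (A+A^\tp)/2-Q\rVert_\F^2 + \lVert (A-A^\tp)/2\rVert_\F^2$, conjugates by $V$ to reduce to approximating $D$, and then asserts (citing the standard best-approximation arguments of Higham) that an optimal $V^\tp Q V$ must be diagonal, after which the problem collapses to the scalar assignment $\min \sum_i (\lambda_i - \delta_i)^2$ over $\delta_i = \pm 1$ with $\sum_i \delta_i = 2k-n$. You instead expand the square, use $\lVert Q \rVert_\F^2 = n$ on the feasible set to convert the problem into maximizing the linear functional $\tr(SQ)$ with $S = (A+A^\tp)/2$, conjugate by $V$, and bound $\tr(D\tilde{Q}) = \sum_i \lambda_i \tilde{q}_{ii}$ by Schur's majorization of the diagonal of $\tilde{Q}$ by its spectrum $(1,\dots,1,-1,\dots,-1)$ together with the rearrangement/Abel-summation argument (equivalently the Ky Fan--von Neumann trace inequality for symmetric matrices), then check attainment at $Q^\star = VI_{k,n-k}V^\tp$. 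What your version buys is that it avoids the somewhat hand-waved ``the optimizer must be diagonal'' step by replacing it with a clean, citable inequality, and it makes the equality analysis transparent: your observation that uniqueness fails exactly when $\lambda_k = \lambda_{k+1}$ is precisely the remark the paper makes separately after the lemma. What the paper's version buys is that it stays entirely within the distance formulation and matches the classical template for nearest-matrix problems with prescribed spectrum, which is why the authors can dispatch it with a one-line appeal to standard arguments. Both proofs hinge on the same two reductions (symmetrize $A$, diagonalize by $V$), so the difference is confined to the final optimization-over-spectra step.
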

\begin{proof}
Since $Q$ is symmetric, $\lVert A-Q\rVert_\F^2 = \lVert (A+A^\tp)/2-Q\rVert_\F^2 + \lVert (A-A^\tp)/2\rVert_\F^2$, a best approximation to $A$ is also a best approximation to $(A+A^\tp)/2$. By \eqref{eq:approx}, $\lVert (A+A^\tp)/2-Q\rVert_\F = \lVert D-V^\tp Q V\rVert_\F$ and so for a best approximation $V^\tp Q V$ must be a diagonal matrix. Since the eigenvalues $\delta_1,\dots,\delta_n$ of a symmetric orthogonal $Q$ must be $\pm 1$ and $\tr(Q) = 2k - n$, the multiplicities of  $+1$ and $-1$  are $k$ and $n-k$ respectively. By assumption, $\lambda_1 \ge \dots \ge \lambda_n$, so
\[
\min_{\delta_1 + \dots + \delta_n =2k -n} (\lambda_1 -\delta_1)^2 + \dots + (\lambda_n -\delta_n)^2
\]
is attained when $\delta_1 = \dots = \delta_k = +1$ and $\delta_{k+1} = \dots = \delta_n = -1$. Hence $V^\tp Q V = \diag(\delta_1,\dots,\delta_n) = I_{k, n-k}$ as required.
\end{proof}
It is clear from the proof, which is a variation of standard arguments \cite[Section~8.1]{Higham2}, that a minimizer is not unique if and only if $\lambda_k=\lambda_{k+1}$, i.e., the $k$th and $(k+1)$th eigenvalues of $(A+A^\tp)/2$ coincide. Since any $Q \in \Gr(k, n)$ by definition has $\lambda_k = +1 \ne -1 =\lambda_{k+1}$, the projection is always unique in a small enough neighborhood of $Q$ in $\mathbb{R}^{n \times n}$.

In the following, let $\eig: \mathbb{R}^{n \times n} \to \O(n)$ be the map that takes any $A  \in \mathbb{R}^{n \times n}$ to an orthogonal matrix of eigenvectors of $(A+A^\tp)/2$.
\begin{proposition}[Retraction I]\label{prop:Reig}
Let $Q\in \Gr(k,n)$ and $X,Y \in \T_Q \Gr(k,n)$ with
\begin{equation}\label{eq:form}
Q = V I_{k,n-k} V^\tp, \qquad
X = V\begin{bmatrix}
0 & B \\
B^\tp & 0
\end{bmatrix}V^\tp,
\qquad
Y = V\begin{bmatrix}
0 & C \\
C^\tp & 0
\end{bmatrix}V^\tp,
\end{equation}
where $V \in \O(n)$ and $B, C\in \mathbb{R}^{k \times (n-k)}$. 
Then
\begin{align*}
R_Q^\eig(X) &= V\eig\bigg(\begin{bmatrix}
I & B \\
B^\tp & -I
\end{bmatrix}\bigg)I_{k, n-k}\eig\bigg(\begin{bmatrix}
I & B \\
B^\tp & -I
\end{bmatrix}\bigg)^\tp V^\tp
\intertext{defines a retraction and}
T_Q^\eig (X,Y) &= V\eig\bigg(\begin{bmatrix}
I & B \\
B^\tp & -I
\end{bmatrix}\bigg)\begin{bmatrix}
0 & C \\
C^\tp & 0
\end{bmatrix} \eig\bigg(\begin{bmatrix}
I & B \\
B^\tp & -I
\end{bmatrix}\bigg)^\tp V^\tp
\end{align*}
defines a vector transport.
\end{proposition}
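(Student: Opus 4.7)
The plan is to recognize $R_Q^{\eig}$ as, locally, the ambient nearest-point projection onto $\Gr(k,n)$, which makes Lemma~\ref{lem:approx} do essentially all of the work for the retraction axioms. The key identification is that
\[
Q + X \;=\; V \begin{bmatrix} I & B \\ B^\tp & -I \end{bmatrix} V^\tp
\]
is symmetric, so Lemma~\ref{lem:approx} (applied after conjugating by $V$) shows that $R_Q^{\eig}(X)$ is precisely the Frobenius-nearest point of $\Gr(k,n)$ to $Q + X$. For small $B$ the spectrum of $\begin{bsmallmatrix} I & B \\ B^\tp & -I \end{bsmallmatrix}$ still separates strictly across $0$, so this projection is single-valued even though the $\eig$ map is not.

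With $R_Q^{\eig}$ identified as the composition $X \mapsto Q + X \mapsto p(Q + X)$, where $p$ denotes the nearest-point projection onto the submanifold $\Gr(k,n) \subseteq \mathbb{R}^{n \times n}$, the two retraction axioms are immediate. Axiom (a), $R_Q^{\eig}(0) = Q$, holds because $Q$ is its own nearest neighbor in $\Gr(k,n)$. For axiom (b), I would invoke the standard tubular-neighborhood fact that the differential of $p$ at a point of $\Gr(k,n)$ is the orthogonal projection of the ambient tangent space $\T_Q \mathbb{R}^{n \times n}$ onto $\T_Q \Gr(k,n)$; since $X$ already lies in $\T_Q \Gr(k,n)$, differentiating $t \mapsto p(Q + tX)$ at $t = 0$ returns $X$, so $d R_Q^{\eig}(0) = \mathrm{id}$.

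For the vector transport $T_Q^{\eig}$, I would verify the three conditions of Definition~\ref{def:retr} in turn. Writing $U \coloneqq \eig\bigl(\begin{bsmallmatrix} I & B \\ B^\tp & -I \end{bsmallmatrix}\bigr)$, the product $VU$ is an eigenvector matrix for $R_Q^{\eig}(X)$ in the form required by Corollary~\ref{cor:tangent space}, so $\T_{R_Q^{\eig}(X)} \Gr(k,n)$ consists precisely of matrices $(VU) \begin{bsmallmatrix} 0 & \ast \\ \ast^\tp & 0 \end{bsmallmatrix} (VU)^\tp$; this is exactly the shape of $T_Q^{\eig}(X, Y)$, giving axiom (i), and axiom (iii) (linearity in $Y$) is transparent from the formula. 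For axiom (ii), $T_{Q,0}(Y) = Y$, one observes that when $B = 0$ the natural convention $\eig(I_{k,n-k}) = I_n$ collapses $T_{Q,0}(Y)$ to $V \begin{bsmallmatrix} 0 & C \\ C^\tp & 0 \end{bsmallmatrix} V^\tp = Y$.

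The main obstacle is the multi-valued nature of $\eig$ at matrices with repeated eigenvalues: two eigenvector matrices $U_1, U_2$ for the same $\begin{bsmallmatrix} I & B \\ B^\tp & -I \end{bsmallmatrix}$ can differ by a block-diagonal orthogonal factor in $\O(k) \times \O(n-k)$. The retraction is insensitive to this ambiguity because $U I_{k,n-k} U^\tp$ is invariant under it, but the transport is not; so the proof must fix a choice of $\eig$ that is continuous through $B = 0$ and satisfies $\eig(I_{k,n-k}) = I_n$. Once such a convention is pinned down (any reasonable numerical routine supplies it), the three vector transport axioms hold cleanly.
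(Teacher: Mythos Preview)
Your approach is essentially the same as the paper's: the paper's proof simply observes that Lemma~\ref{lem:approx} identifies $R_Q^{\eig}$ as the nearest-point projection onto $\Gr(k,n)$ (via the same identity $Q+X = V\begin{bsmallmatrix} I & B \\ B^\tp & -I\end{bsmallmatrix}V^\tp$ you wrote down) and then declares the axioms of Definition~\ref{def:retr} ``routine to verify,'' implicitly invoking the projection-as-retraction result cited just before the proposition. You have filled in precisely those routine verifications, and you have been more honest than the paper about the multi-valuedness of $\eig$ and the need for a consistent local branch through $B=0$.
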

\begin{proof}
It follows from Lemma~\ref{lem:approx} that $R_Q^\eig$ defines a projection. The properties in Definition~\ref{def:retr} are routine to verify.
\end{proof}
As we will see later, the exponential map in our Riemannian algorithms may be computed in $O\bigl(nk(n-k)\bigr)$ time, so a retraction map that requires an \textsc{evd} offers no advantage. Furthermore, the eigenvector map $\eig$ is generally discontinuous \cite{Kato}, which can present a problem.
One alternative would be to approximate the map $\eig$ with a \textsc{qr} decomposition --- one should think of this as the first step of Francis's \textsc{qr} algorithm for \textsc{evd}. In fact, we will not even require a full \textsc{qr} decomposition, a $2 \times 2$ block \textsc{qr} decomposition suffices. Let $\qr : \mathbb{R}^{n \times n} \to \O(n)$ be a map that takes a matrix $A$ to its orthogonal factor in a $2 \times 2$ block \textsc{qr} decomposition, i.e.,
\[
A = \qr(A) 
\begin{bmatrix}
R_1 & R_2 \\
0 & R_3
\end{bmatrix}, \quad R_1 \in \mathbb{R}^{k \times k},\; R_2 \in \mathbb{R}^{k \times (n-k)}, \; R_3 \in \mathbb{R}^{(n-k) \times (n-k)}.
\]
Note that $\qr(A)$ is an orthogonal matrix but the second factor just needs to be block upper triangular, i.e., $R_1$ and $R_3$ are not required to be upper triangular matrices.  We could compute $\qr(A) $ with, say, the first $k$ steps of Householder \textsc{qr} applied to $A$.
\begin{proposition}[Retraction II]\label{prop:Rqr}
Let $Q\in \Gr(k,n)$ and $X,Y \in \T_Q \Gr(k,n)$ be as in \eqref{eq:form}. If $\qr$ is well-defined and differentiable near $I_{k,n-k}$ and $\qr(I_{k,n-k}) = I$, then
\begin{align*}
R_Q^\qr (X) &= V\qr\bigg(\frac{1}{2}\begin{bmatrix}
I & B \\
B^\tp & -I
\end{bmatrix}\bigg) I_{k,n-k} \qr\bigg(\frac{1}{2}\begin{bmatrix}
I & B \\
B^\tp & -I
\end{bmatrix}\bigg)^\tp V^\tp
\intertext{defines a retraction and}
T_Q^\qr (X,Y) &= V\qr\bigg(\frac{1}{2}\begin{bmatrix}
I & B \\
B^\tp & -I
\end{bmatrix}\bigg) \begin{bmatrix}
0 & C \\
C^\tp & 0
\end{bmatrix} \qr\biggl(\frac{1}{2}\begin{bmatrix}
I & B \\
B^\tp & -I
\end{bmatrix}\biggr)^\tp V^\tp
\end{align*}
defines a vector transport.
\end{proposition}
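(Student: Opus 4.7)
The plan is to verify the two retraction conditions of Definition~\ref{def:retr} for $R_Q^\qr$ and the three vector-transport conditions for $T_Q^\qr$, paralleling the proof of Proposition~\ref{prop:Reig}. The two nontrivial ingredients are positive-scale invariance of the orthogonal factor in a QR decomposition and a first-order perturbation analysis of $\qr$ near $I_{k,n-k}$.

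For condition (a), $R_Q^\qr(0) = Q$: setting $B = 0$ reduces the $\qr$ argument to $\tfrac{1}{2} I_{k,n-k}$, and positive-scale invariance together with the hypothesis $\qr(I_{k,n-k}) = I$ gives $\qr(\tfrac{1}{2} I_{k,n-k}) = I$, so $R_Q^\qr(0) = V I_{k,n-k} V^\tp = Q$.

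Condition (b), $dR_Q^\qr(0) = \mathrm{id}$, is the heart of the argument. Write $Q_0(t) = \qr(M(t))$ with $M(t) = \tfrac{1}{2}\bigl[\begin{smallmatrix} I & tB \\ tB^\tp & -I \end{smallmatrix}\bigr]$ and let $R(t)$ be the associated block upper triangular factor, so $Q_0(0) = I$ and $R(0) = \tfrac{1}{2} I_{k,n-k}$. Differentiating $M(t) = Q_0(t) R(t)$ at $t = 0$ yields
\[
\dot M(0) = \dot Q_0(0)\,\tfrac{1}{2} I_{k,n-k} + \dot R(0),
\]
where $\dot Q_0(0)$ is skew-symmetric (tangent to $\O(n)$ at $I$) and $\dot R(0) = \bigl[\begin{smallmatrix} \dot R_1 & \dot R_2 \\ 0 & \dot R_3 \end{smallmatrix}\bigr]$. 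Matching the four blocks, the $(2,1)$ entry identifies the lower-left block of $\dot Q_0(0)$ in terms of $B^\tp$; the $(1,2)$ entry forces $\dot R_2 = 0$; and the uniqueness convention that makes $\qr$ well-defined (requiring $\dot R_1$ and $\dot R_3$ to be upper triangular, so that ``skew $\cap$ upper triangular $= \{0\}$'') kills the $(1,1)$ and $(2,2)$ blocks of $\dot Q_0(0)$. Substituting the resulting $\dot Q_0(0)$ into $\frac{d}{dt} R_Q^\qr(tX)\big|_{t=0} = V\bigl[\dot Q_0(0) I_{k,n-k} + I_{k,n-k} \dot Q_0(0)^\tp\bigr] V^\tp$ and simplifying via the anticommutation of $I_{k,n-k}$ with off-block-diagonal matrices recovers $X$.

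The vector-transport conditions on $T_Q^\qr$ are routine. Compatibility (i) follows from Corollary~\ref{cor:tangent space} applied to the eigendecomposition $R_Q^\qr(X) = (V Q_0) I_{k,n-k} (V Q_0)^\tp$: we have $\T_{R_Q^\qr(X)} \Gr(k,n) = (V Q_0)\,\T_{I_{k,n-k}} \Gr(k,n)\,(V Q_0)^\tp$, and $T_Q^\qr(X, Y)$ manifestly lies in this set. Condition (ii), $T_Q^\qr(0, Y) = Y$, follows from $\qr(\tfrac{1}{2} I_{k,n-k}) = I$ exactly as in (a). Linearity (iii) in $Y$ is immediate from the linear dependence on $C$. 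The main obstacle is the block-structured QR perturbation in (b); the hypothesis that $\qr$ is well-defined and differentiable near $I_{k,n-k}$ is precisely what ensures the perturbation equation is uniquely solvable for $\dot Q_0(0)$, without which the diagonal blocks of $\dot Q_0(0)$ would be free parameters and $R_Q^\qr$ would not even be single-valued.
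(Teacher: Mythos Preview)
Your overall strategy matches the paper's: verify condition~(b) by differentiating the block \textsc{qr} factorization $M(t) = Q_0(t)R(t)$ at $t=0$ and exploit skew-symmetry of $\dot Q_0(0)$. However, your step that ``kills the $(1,1)$ and $(2,2)$ blocks of $\dot Q_0(0)$'' is not justified. You invoke ``the uniqueness convention that makes $\qr$ well-defined (requiring $\dot R_1$ and $\dot R_3$ to be upper triangular),'' but the paper's $\qr$ is explicitly a \emph{block} \textsc{qr} decomposition in which $R_1$ and $R_3$ are \emph{not} required to be upper triangular (this is stated just before the proposition). The hypothesis that $\qr$ is well-defined and differentiable near $I_{k,n-k}$ only means some smooth local choice of the (non-unique) block factorization has been made; it gives you no control over the form of $\dot R_1, \dot R_3$, so the ``skew $\cap$ upper triangular $=\{0\}$'' trick is unavailable.

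The good news is that this step is unnecessary, and the paper's route bypasses it. Since $\dot Q_0(0)$ is skew-symmetric, its diagonal blocks $Q_1'(0)$ and $Q_4'(0)$ are themselves skew-symmetric, and $Q_2'(0) = -Q_3'(0)^\tp$. Hence
\[
\frac{d}{dt}\Big|_{t=0} Q_0(t)\, I_{k,n-k}\, Q_0(t)^\tp
= \begin{bmatrix} Q_1'(0)+Q_1'(0)^\tp & -Q_2'(0)+Q_3'(0)^\tp \\[2pt] Q_3'(0)-Q_2'(0)^\tp & -Q_4'(0)-Q_4'(0)^\tp \end{bmatrix}
= \begin{bmatrix} 0 & 2Q_3'(0)^\tp \\ 2Q_3'(0) & 0 \end{bmatrix},
\]
so the diagonal blocks vanish \emph{automatically} --- you never need $Q_1'(0)$ or $Q_4'(0)$ to vanish individually. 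Only $Q_3'(0)$ enters, and it is read off from the $(2,1)$ block of the factorization alone (the paper eliminates $R_1$ between the $(1,1)$ and $(2,1)$ block equations to get $Q_3(t)$ directly in terms of $Q_1(t)$ and $B$). Drop the upper-triangularity claim and use this observation instead; the rest of your argument then goes through unchanged.
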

\begin{proof}
Only property \eqref{it:b} in Definition~\ref{def:retr} is not immediate and requires checking.
Let the following be a block \textsc{qr} decomposition:
\begin{equation}\label{eq:blockqr}
\frac{1}{2}
\begin{bmatrix}
I & tB \\
tB^\tp & -I
\end{bmatrix} = \begin{bmatrix}
Q_1(t) & Q_2(t) \\
Q_3(t) & Q_4(t)
\end{bmatrix}
\begin{bmatrix}
R_1(t) & R_2(t) \\
0 & R_3(t)
\end{bmatrix}
= Q(t)R(t),
\end{equation}
with $Q(t) \in \O(n)$.
Since $Q(t)Q(t)^\tp = 1$ and $Q(0)=I$, $Q'(0)$ is skew-symmetric and
\[
\frac{d}{dt} Q(t)I_{k, n-k}Q(t)^\tp\biggr\rvert_{t=0} = \begin{bmatrix}
Q_1'(0)+Q_1'(0)^\tp & -Q_2'(0) + Q_3'(0)^\tp \\
Q_3'(0)-Q_2'(0)^\tp & -Q_4'(0)-Q_4'(0)^\tp
\end{bmatrix} =  \begin{bmatrix}
0 & 2Q_3'(0)^\tp \\
2Q_3'(0) & 0
\end{bmatrix}.
\]
Comparing the $(1, 1)$ and $(2, 1)$ entries in \eqref{eq:blockqr}, we get
\[
Q_1(t)R_1(t)=I, \qquad Q_3(t)R_1(t)=tB^\tp/2.
\]
Hence $Q_3(t) = tB^\tp Q_1(t)/2$, $Q_3'(0) = B^\tp Q_1(0)/2 = B^\tp /2$, and we get
\[
\frac{d}{dt} Q(t)I_{k, n-k}Q(t)^\tp\biggr\rvert_{t=0}=\begin{bmatrix}
0 & B \\
B^\tp & 0
\end{bmatrix},
\]
as required.
\end{proof}

If we use a first-order Pad\'e approximation $\exp(X) \approx (I + X)(I - X)^{-1}$ for the matrix exponential terms in the exponential map \eqref{eq:exp} and parallel transport \eqref{eq:pt}, we obtain another retraction map and vector transport. This Pad\'e approximation is the well-known \emph{Cayley transform} $\cay$, which takes a skew-symmetric matrix to an orthogonal matrix and vice versa:
\[
\cay : \mathsf{\Lambda}^2(\mathbb{R}^n) \to \O(n), \quad \Lambda \to (I+ \Lambda)(I-\Lambda)^{-1}.
\]
\begin{proposition}[Retraction III]\label{prop:Rcay}
Let $Q\in \Gr(k,n)$ and $X,Y \in \T_Q \Gr(k,n)$ be as in \eqref{eq:form}. Then
\begin{align*}
R_Q^\cay (X) &=
V\cay \bigg(\frac{1}{4}\begin{bmatrix}
0 & -B \\
B^\tp & 0
\end{bmatrix}\bigg) I_{k,n-k}\cay \bigg(\frac{1}{4}\begin{bmatrix}
0 & -B \\
B^\tp & 0
\end{bmatrix}\bigg)^\tp V^\tp
\intertext{defines a retraction and}
T_Q^\cay(X,Y) &= V\cay\bigg(\frac{1}{4}\begin{bmatrix}
0 & -B \\
B^\tp & 0
\end{bmatrix}\bigg) \begin{bmatrix}
0 & C \\
C^\tp & 0
\end{bmatrix} \cay\bigg(\frac{1}{4}\begin{bmatrix}
0 & -B \\
B^\tp & 0
\end{bmatrix}\bigg)^\tp V^\tp
\end{align*}
defines a vector transport.
\end{proposition}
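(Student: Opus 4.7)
The plan is to verify the two retraction axioms of Definition~\ref{def:retr} for $R_Q^\cay$ and the three vector-transport axioms for $T_Q^\cay$, leveraging the standard facts that $\cay(\Lambda) \in \O(n)$ for any skew-symmetric $\Lambda$ and that $\cay(0) = I$. Write $\Lambda(X) \coloneqq \tfrac{1}{4}\begin{bsmallmatrix} 0 & -B \\ B^\tp & 0 \end{bsmallmatrix}$ and $C(X) \coloneqq \cay(\Lambda(X))$. Because $\Lambda(X)$ is skew-symmetric, $I - \Lambda(X)$ is invertible and $C(X) \in \O(n)$; hence $R_Q^\cay(X) = VC(X)I_{k,n-k}C(X)^\tp V^\tp$ is symmetric, orthogonal, and has trace $\tr(I_{k,n-k}) = 2k-n$, so it lies in $\Gr(k,n)$. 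Since $VC(X) \in \O(n)$ is an eigenbasis for $R_Q^\cay(X)$, Corollary~\ref{cor:tangent space} places $T_Q^\cay(X,Y) \in \T_{R_Q^\cay(X)} \Gr(k,n)$, so both maps are well-defined.

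The easy axioms come for free. Setting $X = 0$ gives $C(0) = I$, hence $R_Q^\cay(0) = Q$ and $T_{Q,0}^\cay(Y) = V\begin{bsmallmatrix} 0 & C \\ C^\tp & 0 \end{bsmallmatrix}V^\tp = Y$; compatibility of $T^\cay$ with $R^\cay$ and linearity of $T_{Q,X}^\cay$ in $Y$ are immediate from the formulas. Only the normalization $dR_Q^\cay(0) = \operatorname{id}$ requires genuine work.

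For that calculation I would first establish the Cayley-transform differential identity $d\cay(0)[\dot\Lambda] = 2\dot\Lambda$, which drops out of the product rule applied to $\cay(\Lambda) = (I+\Lambda)(I-\Lambda)^{-1}$ at $\Lambda = 0$: each factor contributes $\dot\Lambda$. Substituting $tX$ for $X$ and differentiating at $t = 0$ then gives $C'(0) = 2\Lambda(X)$, which is skew-symmetric, so
\[
\frac{d}{dt} R_Q^\cay(tX) \Bigr|_{t=0} = V\bigl(C'(0) I_{k,n-k} + I_{k,n-k} C'(0)^\tp\bigr) V^\tp = 2 V\bigl(\Lambda(X) I_{k,n-k} - I_{k,n-k} \Lambda(X)\bigr) V^\tp.
\]
A direct $2 \times 2$ block multiplication shows the commutator inside the parentheses equals $\tfrac{1}{2}\begin{bsmallmatrix} 0 & B \\ B^\tp & 0 \end{bsmallmatrix}$, so the whole expression collapses to $V \begin{bsmallmatrix} 0 & B \\ B^\tp & 0 \end{bsmallmatrix} V^\tp = X$, as required. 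The only step with any substance is the Cayley-transform derivative identity; everything else is elementary block-matrix arithmetic organized around the sign pattern of $I_{k,n-k}$.
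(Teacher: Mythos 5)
Your proof is correct and follows the same route the paper takes: it reduces everything to checking the axioms of Definition~\ref{def:retr}, with only the condition $dR_Q^\cay(0) = \mathrm{id}$ requiring genuine computation. The paper declares that step ``routine'' and omits it, whereas you supply the details via $d\cay(0)[\dot\Lambda] = 2\dot\Lambda$ and the commutator with $I_{k,n-k}$, and your calculation checks out.
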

\begin{proof}
Again, only property \eqref{it:b} in Definition~\ref{def:retr} is not immediate and requires checking. But this is routine we omit the details.
\end{proof}

\section{Algorithms}\label{sec:algo}

We will now discuss optimization algorithms for minimizing a function $f : \Gr(k,n) \to \mathbb{R}$ in  the involution model.  In principle, this is equivalent to a quadratically constrained optimization problem in $n^2$ variables $[q_{ij}]_{i,j=1}^n = Q\in \mathbb{R}^{n \times n}$:
\begin{equation}\label{eq:nlp}
\begin{tabular}{rl}
minimize & $f(Q)$\\
subject to & $Q^\tp Q = I$, $Q^\tp = Q$, $\tr(Q) = 2k -n$.
\end{tabular}
\end{equation}
Nevertheless, if one attempts to minimize any of the objective functions $f$ in Section~\ref{sec:num} by treating \eqref{eq:nlp} as a general nonlinear constrained optimization problem using, say, the \textsc{Matlab} Optimization Toolbox,  every available method --- interior point, trust region, sequential quadratic programming, active set ---  will fail without even finding a feasible point, never mind a minimizer. The Riemannian geometric objects and operations of the last few sections are essential to solving \eqref{eq:nlp}.

We will distinguish between two types of optimization algorithms. The \emph{retraction algorithms}, as its name implies, will be based on various retractions and vector transports discussed in Section~\ref{sec:retr}. The \emph{Riemannian algorithms}, on the other hand, are built upon true Riemannian geodesics and parallel transports discussed in Section~\ref{sec:exp}. Both types of algorithms will rely on the materials on points in Section~\ref{sec:points}, tangent vectors and metric in Section~\ref{sec:tangent}, and Riemannian gradients and Hessians in Section~\ref{sec:gradHess}.

For both types of algorithms, the involution model offers one significant advantage over other existing models. By \eqref{eq:gradB} and \eqref{eq:exp}, at a point $Q \in \Gr(k,n)$ and in a direction $X \in \T_Q \Gr(k,n)$, the Riemannian gradient and the exponential map are
\[
\begin{adjustbox}{width=\textwidth,totalheight=\textheight,keepaspectratio}
$
\grad f(Q) = V \begin{bmatrix}
0 & G/2 \\
G^\tp/2 & 0
\end{bmatrix}V^\tp, \quad \exp_Q(X)  = V \exp \left( \begin{bmatrix}
0 & -S/2 \\
S^\tp/2 & 0
\end{bmatrix} \right) I_{k,n-k} \exp \left(\begin{bmatrix}
0 & S/2 \\
-S^\tp/2 & 0
\end{bmatrix} \right) V^\tp
$
\end{adjustbox}
\]
respectively. In the involution model, explicit parallel transport and exponential map can be avoided. Instead of $\grad f(Q)$ and $\exp_Q(X)$, it suffices to work with the matrices $G,S \in \mathbb{R}^{k \times (n-k)}$ that we will call \emph{effective gradient} and \emph{effective step} respectively, and doing so leads to extraordinarily simple and straightforward expressions in our algorithms. We will highlight this simplicity at appropriate junctures in Sections~\ref{sec:retralg} and \ref{sec:Rie}. Aside from simplicity, a more important consequence is that all key computations in our algorithms are performed at the \emph{intrinsic dimension} of $\Gr(k,n)$. Our steepest descent direction, conjugate direction, Barzilai--Borwein step, Newton step, quasi-Newton step, etc, would all be represented as $k(n-k)$-dimensional objects. This is a feature not found in the algorithms of \cite{AMS, EAS, HHT}.

\subsection{Initialization, eigendecomposition, and exponentiation}\label{sec:subroutines}

We begin by addressing three issues that we will frequently encounter in our optimization algorithms.

First observe that it is trivial to generate a point $Q \in \Gr(k,n)$ in the involution model: Take any orthogonal matrix $V \in \O(n)$, generated by say a \textsc{qr} decomposition of a random $n\times n$ matrix. Then we always have $Q \coloneqq V I_{k,n-k} V^\tp \in \Gr(k,n)$. We may easily generate as many random feasible initial points for our algorithms as we desire or simply take $I_{k,n-k}$ as our initial point.

The inverse operation of obtaining a $V \in \O(n)$ from a given $Q \in \Gr(k,n)$ so that
$Q = V I_{k,n-k} V^\tp$ seems more expensive as it appears to require an \textsc{evd}. In fact, by the following observation, the cost is the same --- a single \textsc{qr} decomposition.
\begin{lemma}\label{lem:qr}
Let $Q \in \mathbb{R}^{n \times n}$ with $Q^\tp Q = I$, $Q^\tp = Q$, $\tr(Q) = 2k -n$. If
\begin{equation}\label{eq:qrV}
\frac{1}{2}(I + Q) = V\begin{bmatrix} R_1 & R_2 \\ 0 & 0 \end{bmatrix}, \qquad V \in \O(n), \; R_1 \in \mathbb{R}^{k \times k},\; R_2\in \mathbb{R}^{k \times (n-k)},
\end{equation}
is a \textsc{qr} decomposition, then $Q = V I_{k,n-k} V^\tp$.
\end{lemma}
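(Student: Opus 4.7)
The plan is to recognize $P := \tfrac12(I+Q)$ as an orthogonal projection and then read off its $1$-eigenspace from the \textsc{qr} decomposition.

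First I would verify that $P$ is the orthogonal projection onto the $1$-eigenspace of $Q$. Using $Q^\tp = Q$ and $Q^2 = I$, a short calculation gives $P^\tp = P$ and $P^2 = P$, so $P$ is an orthogonal projection, and $\tr(P) = \tfrac12(n + 2k-n) = k$, so $\rank(P) = k$. Writing $U := \im(P)$, one checks $Qv = v \iff Pv = v \iff v \in U$, so $U$ is the $1$-eigenspace of $Q$ and $U^\perp$ is the $-1$-eigenspace (since $Q$ is a symmetric involution its eigenvalues are $\pm 1$ and $\mathbb{R}^n$ decomposes as their direct sum).

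Next I would extract $U$ from the \textsc{qr} decomposition. Partition $V = [V_1, V_2]$ with $V_1 \in \mathbb{R}^{n \times k}$ and $V_2 \in \mathbb{R}^{n \times (n-k)}$. Because the bottom block of $\begin{bsmallmatrix} R_1 & R_2 \\ 0 & 0 \end{bsmallmatrix}$ is zero, every column of $P$ lies in $\spn(V_1)$, i.e.\ $U = \im(P) \subseteq \spn(V_1)$. Since $\dim U = k = \dim \spn(V_1)$, equality holds: $\spn(V_1) = U$. Orthogonality of $V$ then forces $\spn(V_2) = U^\perp$, which is the $-1$-eigenspace of $Q$.

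Finally, $QV_1 = V_1$ and $QV_2 = -V_2$ combine into $QV = V I_{k,n-k}$, so $Q = V I_{k,n-k} V^\tp$ as claimed. The only nontrivial step is the dimension-count argument identifying $\spn(V_1)$ with $U$; nothing else requires more than the algebraic identities $P^2 = P$ and $P^\tp = P$, so the proof is essentially a one-liner once $P$ is recognized as a projector. Note in particular that this argument does not require $R_1$ to be invertible: the conclusion $\im(P) \subseteq \spn(V_1)$ follows purely from the block structure of the \textsc{qr} factor, with the dimension count closing the gap automatically.
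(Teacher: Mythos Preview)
Your proof is correct and follows essentially the same route as the paper: recognize $P=\tfrac12(I+Q)$ as the rank-$k$ orthogonal projection onto the $+1$-eigenspace of $Q$, then read off from the block structure of the \textsc{qr} factor that the first $k$ columns of $V$ span $\im(P)$ and hence (by orthogonality of $V$) the last $n-k$ columns span the $-1$-eigenspace. Your explicit dimension-count to close the inclusion $\im(P)\subseteq\spn(V_1)$, and your remark that invertibility of $R_1$ is not needed, make the argument slightly cleaner than the paper's more telegraphic version.
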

\begin{proof}
Recall from \eqref{eq:1eig} that for such a $Q$, we may write $V = [Y, Z]$ where $Y \in \V(k,n)$ and $Z \in \V(n-k,n)$ are a $+1$-eigenbasis and a $-1$-eigenbasis of $Q$ respectively. By Proposition~\ref{prop:G2}, $\frac{1}{2}(I + Q)$ is the projection matrix onto the $+1$-eigenspace $\im (Y)= \im\bigl(\frac{1}{2}(I + Q)\bigr)$, i.e., $Y$ is an orthonormal column basis for $\frac{1}{2}(I + Q)$ and is therefore given by its condensed \textsc{qr} decomposition. As for $Z$, note that any orthonormal basis for $\im(Y)^\perp$ would serve the role, i.e., $Z$ can be obtained from the full \textsc{qr} decomposition. In summary,
\[
\frac{1}{2}(I + Q) = Y \begin{bmatrix} R_1 \\ 0 \end{bmatrix} = \begin{bmatrix} Y & Z \end{bmatrix} \begin{bmatrix} R_1 & R_2\\ 0 & 0 \end{bmatrix}.
\]
As a sanity check, note that
\[
\frac{1}{2}(I + Q) = Y Y^\tp =  \begin{bmatrix} Y & Z \end{bmatrix} \begin{bmatrix} I_k & 0 \\ 0 & 0 \end{bmatrix} \begin{bmatrix} Y \\ Z \end{bmatrix} = V \begin{bmatrix} I_k & 0 \\ 0 & 0 \end{bmatrix} V^\tp,
\]
and therefore
\[
Q = V \begin{bmatrix} I_k & 0 \\ 0 & -I_{n-k} \end{bmatrix} V^\tp = V I_{k,n-k} V^\tp. \qedhere
\]
\end{proof}
Our expressions for tangent vector, exponential map, geodesic, parallel transport, retraction, etc, at a point  $Q \in \Gr(k,n)$  all involve its matrix of eigenvectors $V \in \O(n)$. So Lemma~\ref{lem:qr} plays an important role in our algorithms.
In practice, numerical stability considerations in the presence of rounding errors \cite[Section~3.5.2]{Demmel} require that we perform our \textsc{qr} decomposition with \emph{column pivoting} so that \eqref{eq:qrV} becomes
\[
\frac{1}{2}(I + Q) = V\begin{bmatrix} R_1 & R_2 \\ 0 & 0 \end{bmatrix} \Pi^\tp
\]
where $\Pi$ is a permutation matrix. This does not affect our proof above; in particular, note that we have no need for $R_1$ nor $R_2$ nor $\Pi$ in any of our algorithms.

The most expensive step in our Riemannian algorithms is the evaluation
\begin{equation}\label{eq:expB}
B \mapsto
\exp \biggl( \begin{bmatrix}
0 & B \\
-B^\tp & 0
\end{bmatrix} \biggr)
\end{equation}
for $B \in \mathbb{R}^{k \times (n-k)}$. General algorithms for computing matrix exponential \cite{Higham2, MVL} do not exploit structures aside from normality. There are specialized algorithms that take advantage of skew-symmetry\footnote{The retraction based on Cayley transform in Proposition~\ref{prop:Rcay} may be viewed as a special case of the Pad\'e approximation method in \cite{CL}.} \cite{CL} or both skew-symmetry and sparsity \cite{DLP} or the fact \eqref{eq:expB} may be regarded as the exponential map of a Lie algebra to a Lie group \cite{CI}, but all of them require $O(n^3)$ cost. In \cite{EAS}, the exponential is computed via an \textsc{svd} of $B$.

Fortunately for us, we have a fast algorithm for \eqref{eq:expB} based on Strang splitting \cite{Strang} that takes time at most $12nk(n-k)$. First observe a matrix in the exponent of \eqref{eq:expB} may be written as a unique linear combination
\begin{equation}\label{eq:split}
\begin{bmatrix} 0 & B \\ \smash[b]{-B^\tp} & 0 \end{bmatrix} = \sum_{i=1}^k \sum_{j=1}^{n-k} \alpha_{ij} \begin{bmatrix}
0 & E_{ij} \\
-E_{ij}^\tp & 0
\end{bmatrix}
\end{equation}
where $\alpha_{ij} \in \mathbb{R}$ and $E_{ij}$ is the matrix whose $(i,j)$ entry is one and other entries are zero.  Observe that
\[
\exp \biggl(\theta \begin{bmatrix}
0 & E_{ij} \\
-E_{ij}^\tp & 0
\end{bmatrix} \biggr)
 =
\begin{bmatrix}
I + (\cos \theta -1) E_{ii} & (\sin \theta) E_{ij} \\
-(\sin \theta) E_{ji} & I+(\cos \theta-1) E_{jj} 
\end{bmatrix} \eqqcolon G_{i,j+k}(\theta)
\]
is a Givens rotation in the $i$th and $(j+k)$th plane of $\theta$ radians \cite[p.~240]{GVL}.
Strang splitting, applied recursively to \eqref{eq:split}, then allows us to approximate
\begin{multline}\label{eq:strang}
\exp \biggl( \begin{bmatrix}
0 & B \\
-B^\tp & 0
\end{bmatrix} \biggr)
\approx 
 G_{1,1+k}\bigl(\tfrac{1}{2} \alpha_{11}\bigr) G_{1,2+k}\bigl(\tfrac{1}{2} \alpha_{12}\bigr) \cdots  G_{k,n-1}\bigl(\tfrac{1}{2} \alpha_{k,n-k-1}\bigr) G_{k,n}\bigl(\alpha_{k,n-k}\bigr)\\
G_{k,n-1}\bigl(\tfrac{1}{2} \alpha_{k,n-k-1}\bigr) \cdots G_{1,2+k}\bigl(\tfrac{1}{2} \alpha_{12}\bigr)  G_{1,1+k}\bigl(\tfrac{1}{2} \alpha_{11}\bigr).
\end{multline}
Computing the product in \eqref{eq:strang} is thus equivalent to computing a sequence of $2k(n-k)-1$ Givens rotations, which takes time $12nk(n-k) -6n$. For comparison, directly evaluating \eqref{eq:expB} via an \textsc{svd} of $B$ would have taken  time $4k(n-k)^2+22k^3 + 2n^3$ (first two summands for \textsc{svd}  \cite[p.~493]{GVL}, last summand for two matrix-matrix products).

The approximation in \eqref{eq:strang} requires that $\lVert B \rVert $ be sufficiently small \cite{Strang}. But as gradient goes to zero when the iterates converge to a minimizer, $\lVert B \rVert $ will eventually be small enough for Strang approximation. We initialize our Riemannian algorithms with retraction algorithms, which do not require matrix exponential, i.e., run a few steps of a retraction algorithm to get close to a minimizer before switching to a Riemannian algorithm.

\subsection{Retraction algorithms}\label{sec:retralg}

In manifold optimization algorithms, an iterate is a point on a manifold and a search direction is a tangent vector at that point. Retraction algorithms rely on the retraction map $R_Q$ for updating iterates and vector transport $T_Q$ for updating search directions.  Our interest in retraction algorithms is primarily to use them to initialize the Riemannian algorithms in the next section, and as such we limit ourselves to the least expensive ones.

A retraction-based steepest descent avoids even vector transport and takes the simple form
\[
Q_{i+1} = R_{Q_i}\bigl(-\alpha_i \grad f(Q_i)\bigr),
\]
an analogue of the usual $x_{i+1} = x_i - \alpha_i \grad f(x_i)$ in Euclidean space. As for our choice of retraction map, again computational costs dictate that we exclude the projection $R_Q^\eig$ in Proposition~\ref{prop:Reig} since it requires an \textsc{evd}, and limit ourselves to the \textsc{qr} retraction $R_Q^\qr$ or Cayley retraction $R_Q^\cay$ in Propositions~\ref{prop:Rqr} and \ref{prop:Rcay} respectively. We present the latter in Algorithm~\ref{alg:cay} as an example.

We select our step size  $\alpha_i$ using the well-known Barzilai--Borwein formula \cite{BB} but any line search procedure may be used instead.  Recall that over Euclidean space, there are two choices for the Barzilai--Borwein step size:
\begin{equation}\label{eq:bb}
\alpha_i = \frac{s_{i-1}^\tp s_{i-1}}{(g_i-g_{i-1})^\tp s_{i-1}},\qquad
\alpha_i = \frac{(g_i-g_{i-1})^\tp s_{i-1}}{(g_i-g_{i-1})^\tp (g_i-g_{i-1})},
\end{equation}
where $s_{i-1} \coloneqq x_i - x_{i-1}$.
On a manifold $M$, the gradient $g_{i-1} \in \T_{x_{i-1}} M$ would have to be first parallel transported to $\T_{x_i} M$ and the step $s_{i-1}$  would need to be replaced by a tangent vector in $\T_{x_{i-1}} M$ so that the exponential map $\exp_{x_{i-1}}(s_{i-1}) = x_i$. Upon applying this procedure, we obtain
\begin{equation}\label{eq:bbM}
\alpha_i = \frac{\tr(S_{i-1}^\tp S_{i-1})}{\tr\bigl( (G_i-G_{i-1})^\tp S_i)\bigr)}, \qquad
\alpha_i = \frac{\tr\bigl((G_i-G_{i-1})^\tp S_{i-1}\bigr)}{\tr\bigl((G_i-G_{i-1})^\tp (G_i-G_{i-1})\bigr)}.
\end{equation}
In other words, it is as if we have naively replaced the $g_i$ and $s_i$ in \eqref{eq:bb} by the effective gradient $G_i$ and the effective step $S_i$. But \eqref{eq:bbM} is indeed the correct Riemannian expressions for Barzilai--Borwein step size in the involution model --- the parallel transport and exponential map have already been taken into account when we derive \eqref{eq:bbM}. This is an example of the extraordinary simplicity of the involution model that we mentioned earlier and will see again in Section~\ref{sec:Rie}.

\begin{algorithm}
\caption{Steepest descent with Cayley retraction}\label{alg:cay}
\begin{algorithmic}[1]
\State Initialize $Q_0 = V_0 I_{k,n-k} V_0^\tp\in \Gr(k,n)$.
\For{$i=0,1,\dots $}
\State compute effective gradient $G_i$ at $Q_i$  \Comment{entries $\ast$ not needed}
\[
V_i^\tp (f_{Q_i}+f_{Q_i}^\tp) V_i=  \begin{bmatrix}
* & 2G_i \\
2G_i^\tp & *
\end{bmatrix};
\]

\If {$i=0$}
	\State initialize $S_0 = -G_0$, $\alpha_0 = 1$;
\Else 
\State compute Barzilai--Borwein step \Comment{or get $\alpha_i$ from line search}
\begin{align*}
\alpha_i &= \tr\bigl((G_i-G_{i-1})^\tp S_{i-1}\bigr)/\tr\bigl((G_i-G_{i-1})^\tp (G_i-G_{i-1})\bigr);\\
S_i &= -\alpha_i G_i;
\end{align*}
\EndIf

\State perform Cayley transform
\[
C_i = 
\begin{bmatrix}
I &  S_i/4 \\
- S_i^\tp/4 & I
\end{bmatrix}
\begin{bmatrix}
I & -S_i/4\\
S_i^\tp/4 & I
\end{bmatrix}^{-1};
\]

\State update eigenbasis \Comment{effective vector transport}
\[
V_{i+1} = V_i C_i;
\]

\State update iterate
\[
Q_{i+1} = V_{i+1}  I_{k,n-k}  V_{i+1}^\tp;
\]
\EndFor
\end{algorithmic}
\end{algorithm}

Of the two expressions for $\alpha_i$ in \eqref{eq:bbM}, we chose the one on the right because our effective gradient $G_i$, which is computed directly, is expected to be slightly more accurate than our effective step size $S_i$, which is computed from $G_i$.
Other more sophisticated retraction algorithms \cite{AMS2009} can be readily created for the involution model using the explicit expressions derived in Section~\ref{sec:retr}.

\subsection{Riemannian algorithms}\label{sec:Rie}

Riemannian algorithms, called ``geometric algorithms'' in \cite{EAS}, are true geometric analogues of those on Euclidean spaces --- straight lines are replaced by geodesic curves, displacements by parallel transports,  inner products by Riemannian metrics, gradients and Hessians by their Riemannian counterparts. Every operation in a Riemannian algorithm is intrinsic: iterates stay on the manifold, conjugate and search directions stay in tangent spaces, and there are no geometrically meaningless operations like adding a point to a tangent vector or subtracting tangent vectors from two different tangent spaces.

The involution model, like other models in \cite{AMS,EAS,HHT}, supplies a system of extrinsic coordinates that allow geometric objects and operations to be computed with standard numerical linear algebra but it offers a big advantage, namely, one can work entirely with the effective gradients and effective steps. For example, it looks as if parallel transport is missing from our Algorithms~\ref{alg:sd}--\ref{alg:qn}, but that is only because the expressions in the involution model can be simplified to an extent that gives such an illusion. Our parallel transport is effectively contained in the step where we update the eigenbasis $V_i$ to $V_{i+1}$.

We begin with steepest descent in Algorithm~\ref{alg:sd}, the simplest of our four Riemannian algorithms. As in the case of Algorithm~\ref{alg:cay}, we will use Barzilai--Borwein step size but any line search procedure may be used to produce $\alpha_i$. In this case, any conceivable line search procedure would have required us to search over a geodesic curve and thus having to evaluate matrix exponential multiple times, using the Barzilai--Borwein step size circumvents this problem entirely.

Unlike its retraction-based counterpart in Algorithm~\ref{alg:cay}, here the iterates descent along geodesic curves. Algorithm~\ref{alg:cay} may in fact be viewed as an approximation of Algorithm~\ref{alg:sd} where the matrix exponential in Step~9 is replaced with its first-order Pad\'e approximation, i.e., a Cayley transform.
\begin{algorithm}
\caption{Steepest descent}\label{alg:sd}
\begin{algorithmic}[1]
\State Initialize $Q_0 =V_0 I_{k,n-k} V_0^\tp\in \Gr(k,n)$.
\For{$i=0,1,\dots $}
\State compute effective gradient $G_i$ at $Q_i$ \Comment{entries $\ast$ not needed}
\[
V_i^\tp (f_{Q_i}+f_{Q_i}^\tp) V_i=  \begin{bmatrix}
* & 2G_i \\
2G_i^\tp & *
\end{bmatrix};
\]

\If {$i=0$}
	\State initialize $S_0 = -G_0$, $\alpha_0 = 1$;
\Else 
\State compute Barzilai--Borwein step \Comment{or get $\alpha_i$ from line search}
\begin{align*}
\alpha_i &= \tr\bigl((G_i-G_{i-1})^\tp S_{i-1}\bigr)/\tr\bigl((G_i-G_{i-1})^\tp (G_i-G_{i-1})\bigr);\\
S_i &= -\alpha_i G_i;
\end{align*}
\EndIf
\State update eigenbasis \Comment{effective parallel transport}
\[
V_{i+1} = V_i \exp \left( \begin{bmatrix}
0 & -S_i/2 \\
S^\tp_i/2 & 0
\end{bmatrix} \right);
\]

\State update iterate
\[
Q_{i+1} = V_{i+1}  I_{k,n-k}  V_{i+1}^\tp;
\]
\EndFor
\end{algorithmic}
\end{algorithm}

Newton method, shown in Algorithm~\ref{alg:nt}, is straightforward with the computation of Newton step as in \eqref{eq:ns}. In practice, instead of a direct evaluation of $H_Q \in \mathbb{R}^{k(n-k)\times k(n-k)}$ as in \eqref{eq:Hessmatrix}, we determine $H_Q$ in a manner similar to Corollary~\ref{cor:grad}. When regarded as a linear map $H_Q : \T_Q \Gr(k,n) \to  \T_Q \Gr(k,n)$, its value on a basis vector $X_{ij}$ in \eqref{eq:basis} is
\begin{equation}\label{eq:HQ}
H_Q(X_{ij}) = \frac{1}{4}V \begin{bmatrix}
0 & B_{ij}+AE_{ij}-E_{ij}C \\
(B_{ij}+AE_{ij}-E_{ij}C)^\tp & 0
\end{bmatrix} V^\tp,
\end{equation}
where $A,C$ are as in \eqref{eq:partition} and $B_{ij}$ is given by
\[
V^\tp \bigl(f_{QQ}(X_{ij})+f_{QQ}(X_{ij})^\tp\bigr) V=  \begin{bmatrix}
* & B_{ij} \\
B_{ij}^\tp & *
\end{bmatrix},
\]
for all  $i=1,\dots, k$, $j=1,\dots, n-k$. Note that these computations can be performed completely in parallel --- with $k(n-k)$ cores, entries of $H_Q$ can be evaluated all at once.
\begin{algorithm}
\caption{Newton's method}\label{alg:nt}
\begin{algorithmic}[1]
\State Initialize $Q_0=V_0 I_{k,n-k} V_0^\tp \in \Gr(k,n)$.
\For{$i=0,1,\dots $}
\State compute effiective gradient $G_i$ at $Q_i$
\[
V_i^\tp (f_{Q_i}+f_{Q_i}^\tp) V_i=  \begin{bmatrix}
A_i & 2G_i \\
2G_i^\tp & C_i
\end{bmatrix};
\]
\State generate Hessian matrix $H_Q$ by \eqref{eq:Hessmatrix} or \eqref{eq:HQ};

\State solve for effective Newton step $S_i$
\[
H_Q \vect(S_i) = -\vect(G_i);
\]

\State update eigenbasis \Comment{effective parallel transport}
\[
V_{i+1} = V_i \exp \left( \begin{bmatrix}
0 & S_i/2 \\
-S^\tp_i/2 & 0
\end{bmatrix} \right);
\]
\State update iterate
\[
Q_{i+1} = V_{i+1} I_{k,n-k} V_{i+1}^\tp;
\]
\EndFor
\end{algorithmic}
\end{algorithm}

Our conjugate gradient uses the Polak--Ribi\`ere formula \cite{PR} for conjugate step size; it is straightforward to replace that with the formulas of Dai--Yuan \cite{DY}, Fletcher--Reeves \cite{FR}, or Hestenes--Stiefel \cite{HS}. For easy reference:
\begin{equation}\label{eq:cg}
\begin{aligned}
\beta_i^{\textsc{pr}} &= \tr\bigl(G_{i+1}^\tp(G_{i+1}- G_i)\bigr)/\tr(G_i^\tp G_i),
&\beta_i^{\textsc{hs}} &=  -\tr\bigl(G_{i+1}^\tp (G_{i+1} - G_i)\bigr)/\tr\bigl(P_i^\tp (G_{i+1} - G_i)\bigr),\\
\beta_i^{\textsc{fr}} &= \tr(G_{i+1}^\tp G_{i+1})/\tr(G_i^\tp G_i),
&\beta_i^{\textsc{dy}} &= -\tr(G_{i+1}^\tp G_{i+1})/\tr\bigl(P_i^\tp (G_{i+1} - G_i)\bigr).
\end{aligned}
\end{equation}
It may appear from these formulas that we are subtracting tangent vectors from tangent spaces at different points but this is an illusion. The effective gradients $G_i$ and $G_{i+1}$ are defined by the Riemannian gradients $\grad f(Q_i) \in \T_{Q_i} \Gr(k,n)$ and $\grad f(Q_{i+1}) \in \T_{Q_{i+1}} \Gr(k,n)$ as in \eqref{eq:gradB} but they are not Riemannian gradients themselves. The formulas in \eqref{eq:cg} have in fact already accounted for the requisite parallel transports. This is another instance of the simplicity afforded by the involution model that we saw earlier in our Barzilai--Borwein step size \eqref{eq:bbM} --- our formulas in \eqref{eq:cg} are no different from the standard formulas for Euclidean space in \cite{DY,FR,HS,PR}. Contrast these with the formulas in \cite[Equations~2.80 and 2.81]{EAS}, where the parallel transport operator $\tau$ makes an explicit appearance and cannot be avoided.

\begin{algorithm}
\caption{Conjugate gradient}\label{alg:cg}
\begin{algorithmic}[1]
\State Initialize $Q_0=V_0 I_{k,n-k} V_0^\tp \in \Gr(k,n)$.
\State Compute effective gradient $G_0$ at $Q_0$ \Comment{entries $\ast$ not needed}
\[
V_0^\tp (f_{Q_0}+f_{Q_0}^\tp) V_0=  \begin{bmatrix}
* & 2G_0\\
2G_0^\tp & *
\end{bmatrix};
\]
\State initialize $P_0 = S_0 = -G_0$, $\alpha_0 =1$;

\For{$i=0,1,\dots $}

\State compute $\alpha_i$ from line search and set
\[
S_i = -\alpha_i G_i;
\]

\State update eigenbasis \Comment{effective parallel transport}
\[
V_{i+1} = V_i \exp \left( \begin{bmatrix}
0 & -S_i/2 \\
S^\tp_i/2 & 0
\end{bmatrix} \right);
\]

\State update iterate
\[
Q_{i+1} = V_{i+1}  I_{k,n-k}  V_{i+1}^\tp;
\]

\State compute effective gradient $G_{i+1}$ at $Q_{i+1}$  \Comment{entries $\ast$ not needed}
\[
V_{i+1}^\tp (f_X(Q_{i+1})+f_X(Q_{i+1})^\tp) V_{i+1}=  \begin{bmatrix}
* & 2G_{i+1} \\
2G_{i+1}^\tp & *
\end{bmatrix};
\]

\State compute Polak--Ribi\`ere conjugate step size
\[
\beta_i = \tr\bigl((G_{i+1}- G_i)^\tp G_{i+1}\bigr)/\tr(G_i^\tp G_i);
\]

\State update conjugate direction
\[
P_{i+1} = -G_{i+1}+\beta_i P_i;
\]
\EndFor
\end{algorithmic}
\end{algorithm}

Our quasi-Newton method,  given in Algorithm~\ref{alg:qn}, uses \textsc{l-bfgs} updates with two loops recursion \cite{NW}. Observe that a minor feature of Algorithms~\ref{alg:cay}, \ref{alg:sd}, \ref{alg:cg}, \ref{alg:qn} is that they do not require vectorization of matrices; everything can be computed in terms of matrix-matrix products, allowing for Strassen-style fast algorithms. While it is straightforward to replace the \textsc{l-bfgs} updates with full \textsc{bfgs}, \textsc{dfp}, \textsc{sr}{\footnotesize 1}, or Broyden class updates, doing so will require that we vectorize matrices like in Algorithm~\ref{alg:nt}.

\begin{algorithm}
\caption{Quasi-Newton with \textsc{l-bfgs} updates}\label{alg:qn}
\begin{algorithmic}[1]
\State Initialize $Q_0 =V_0 I_{k,n-k} V_0^\tp\in \Gr(k,n)$.
\For{$i=0,1,\dots $}
\State Compute effective gradient $G_i$ at $Q_i$  \Comment{entries $\ast$ not needed}
\[
V_i^\tp \bigl(f_X(Q_i)+f_X(Q_i)^\tp\bigr) V_i=  \begin{bmatrix}
* & 2G_i \\
2G_i^\tp & *
\end{bmatrix};
\]

\If {$i=0$}
	\State initialize $S_0 = -G_0$;
\Else
\State set $Y_{i-1} = G_i - G_{i-1}$ and $P=G_i$; \Comment{$P$ is temporary variable for loop}
\For{$j= i-1,\dots,\max(0, i-m) $}
\State $\alpha_j =  \tr(S_j^\tp P)/\tr(Y_j^\tp S_j)$;
\State $P = P-\alpha_j Y_j$;
\EndFor
\State set $Z = \tr(Y_{i-1}^\tp S_{i-1})/\tr(Y_{i-1}^\tp Y_{i-1})P$; \Comment{$Z$ is temporary variable for loop}
\For{$j=\max(0, i-m),\dots,i-1 $}
\State $\beta_j =  \tr(Y_j^\tp Z)/\tr(Y_j^\tp S_j)$;
\State $Z = Z+ (\alpha_j-\beta_j)S_j$;
\EndFor
\State set effective quasi-Newton step $S_i = -Z$;
\EndIf

\State update eigenbasis \Comment{effective parallel transport}
\[
V_{i+1} = V_i \exp \left( \begin{bmatrix}
0 & -S_i/2 \\
S^\tp_i/2 & 0
\end{bmatrix} \right);
\]
\State update iterate
\[
Q_{i+1} = V_{i+1} I_{k,n-k} V_{i+1}^\tp;
\]
\EndFor
\end{algorithmic}
\end{algorithm}

\subsection{Exponential-free algorithms?}

This brief section is speculative and may be safely skipped. In our algorithms, an exponential matrix $U \coloneqq \exp\bigl(\bigl[\begin{smallmatrix}
0 & B \\
-B^\tp & 0
\end{smallmatrix}\bigr]\bigr)$ is always\footnote{See steps 3, 10 in Algorithm~\ref{alg:sd}; steps 3, 7 in Algorithm~\ref{alg:nt}; steps 7, 8 in Algorithm~\ref{alg:cg};  steps 3, 20 in Algorithm~\ref{alg:qn}.} applied as a conjugation of some \emph{symmetric} matrix $X \in \mathbb{R}^{n \times n}$:
\begin{equation}\label{eq:conjugate}
X\mapsto U X U^\tp \quad \text{or} \quad X \mapsto U^\tp X U.
\end{equation}
In other words, the Givens rotations in \eqref{eq:strang} are applied in the form of \emph{Jacobi rotations} \cite[p.~477]{GVL}. For a symmetric $X$, a Jacobi rotation $X \mapsto G_{ij}(\theta) X G_{ij}(\theta)^\tp$ takes the same number (as opposed to twice the number) of floating point operations as a Givens rotation applied on the left, $X \mapsto G_{ij}(\theta) X$, or on the right, $X \mapsto X G_{ij}(\theta)$. Thus with Strang splitting the operations in \eqref{eq:conjugate} take time $12nk(n-k) $. To keep our algorithms simple, we did not take advantage of this observation.

In principle, one may avoid any actual computation of matrix exponential by simply storing the $k(n-k)$ Givens rotations in \eqref{eq:strang} without actually forming the product,  and apply them as Jacobi rotations whenever necessary. The storage of $G_{ij}(\theta)$ requires just a single floating point number $\theta$ and two indices but one would need to figure out how to update these $k(n-k)$ Givens rotations from one iteration to the next. We leave this as an open problem for interested readers.

\section{Numerical experiments}\label{sec:num}

We will describe three sets of numerical experiments, testing Algorithms~\ref{alg:cay}--\ref{alg:qn} on three different objective functions, the first two are chosen because their true solutions can be independently  determined in closed-form, allowing us to ascertain that our algorithms have converged to the global optimizer. All our codes are open source and publicly available at:
\begin{quote}
\url{https://github.com/laizehua/Simpler-Grassmannians}
\end{quote}
The goal of these numerical experiments is to compare our algorithms for the involution model  in Section~\ref{sec:algo}  with the corresponding algorithms for the Stiefel model  in \cite{EAS}.  Algorithm~\ref{alg:qn}, although implemented in our codes, is omitted from  our comparisons as quasi-Newton methods are not found in \cite{EAS}.

\subsection{Quadratic function}\label{sec:quad}

The standard test function for Grassmannian optimization is the quadratic form in \cite[Section~4.4]{EAS} which, in the Stiefel model, takes the form $\tr(Y^\tp F Y)$ for a symmetric $F \in \mathbb{R}^{n \times n}$ and $Y \in \V(k,n)$. By Proposition~\ref{prop:G3}, we write $Q = 2YY^\tp - I$, then $\tr(Y^\tp F Y) = \bigl(\tr(FQ) + \tr(F)\bigr)/2$. Therefore, in the involution model, this optimization problem takes an even simpler form
\begin{equation}\label{eq:quad}
f(Q) = \tr(FQ)
\end{equation}
for $Q \in \Gr(k,n)$. What was originally quadratic in the Stiefel model becomes linear in the involution model. The minimizer of $f$, 
\[
Q_* \coloneqq \argmin \bigl\{ \tr(FQ) : Q^\tp Q = I, \; Q^\tp = Q, \; \tr(Q) = 2k - n \bigr\},
\]
is given by $Q_* =\Pi V I_{k,n-k} V^\tp \Pi^\tp$ where
\[
\Pi = \begin{bmatrix}  & & 1 \\ & \Ddots & \\ 1 & & \end{bmatrix}\qquad\text{and}\qquad \frac{F + F^\tp}{2} = V D V^\tp
\]
is an eigendecomposition with eigenbasis $V \in \O(n)$ and eigenvalues $D \coloneqq \diag (\lambda_1,\dots,\lambda_n)$ in descending order.
This follows from essentially the same argument\footnote{Recall also that for any real numbers $a_1 \le \dots \le a_n$, $b_1 \le \dots \le b_n$, and  any permutation $\pi$, one always have that $a_1 b_n + a_2 b_{n-1} + \dots + a_n b_1 \le a_1 b_{\pi(1)} +a_2 b_{\pi(2)} + \dots + a_n b_{\pi(n)} \le a_1 b_1 + a_2 b_2 + \dots + a_n b_n$.} used in the proof of Lemma~\ref{lem:approx} and
the corresponding minimum is $f(Q_*) = -\lambda_1 - \dots - \lambda_k + \lambda_{k+1} + \dots + \lambda_n$.

For the function $f(Q) = \tr(FQ)$, the effective gradient $G_i \in \mathbb{R}^{k \times (n-k)}$ in Algorithms~\ref{alg:sd}, \ref{alg:cg}, \ref{alg:qn} at the point $Q_i = V_i I_{k,n-k} V_i^\tp \in \Gr(k,n)$ is given by
\[
V_i^\tp F V_i=  \begin{bmatrix}
A & G_i \\
G_i^\tp & C
\end{bmatrix}.
\]
The matrices $A \in \mathbb{R}^{k \times k}$ and $C \in \mathbb{R}^{(n-k) \times (n-k)}$ are not needed for Algorithms~\ref{alg:sd}, \ref{alg:cg}, \ref{alg:qn} but they are required in Algorithm~\ref{alg:nt}. Indeed, the effective Newton step $S_i  \in \mathbb{R}^{k \times (n-k)}$  in Algorithm~\ref{alg:nt} is obtained by solving the Sylvester equation
\[
AS_i - S_iC = 2G_i.
\]
To see this, note that by Proposition~\ref{prop:Hess},  for any $B  \in \mathbb{R}^{k \times (n-k)}$,
\begin{align*}
\Hess f(Q_i) \bigg(V_i\begin{bmatrix}
0 & B \\
B^\tp & 0
\end{bmatrix} V_i^\tp, V_i \begin{bmatrix}
0 & S_i \\
S_i^\tp & 0
\end{bmatrix} V_i^\tp \bigg) 
&= -\frac{1}{2}\tr\bigg( \begin{bmatrix}
A & G_i \\
G_i^\tp & C
\end{bmatrix} \begin{bmatrix}
XS_i^\tp + S_i B^\tp & 0 \\
0 & -B^\tp S_i - S_i^\tp B
\end{bmatrix}\bigg) \\
& = -\tr\bigl(B^\tp(AS_i - S_iC)\bigr),
\end{align*}
and to obtain the effective Newton step  \eqref{eq:ns}, we simply set the last term to be equal to  $-2\tr(B^\tp G_i)$.

Figure~\ref{fig:plot1} compares the convergence behaviors of the algorithms in \cite{EAS} for the Stiefel model and our Algorithms~\ref{alg:sd}, \ref{alg:nt}, \ref{alg:cg} in the involution model: steepest descent with line search (\textsc{gd})  and with Barzilai--Borwein step size (\textsc{bb}),  conjugate gradient (\textsc{cg}), and Newton's method (\textsc{nt}) for $k=6$, $n =16$. We denote the $i$th iterate in the Stiefel and involution models by $Y_i$ and $Q_i$ respectively --- note that $Y_i$ is a $16 \times 6$ matrix with orthonormal columns whereas $Q_i$ is a $16 \times 16$ symmetric orthogonal matrix. All algorithms are fed the same initial point obtained from 20 iterations of Algorithm~\ref{alg:cay}. Since we have the true global minimizer in closed form, denoted by $Y_*$ and $Q_*$ in the respective model, the error is given by geodesic distance to the true solution. For convenience we compute  $\| Y_i Y_i^\tp - Y_* Y_*^\tp \|_\F$ and $\| Q_i - Q_* \|_\F$, which are constant multiples of the chordal distance \cite[Table~2]{YL} (also called projection $\mathsf{F}$-norm \cite[p.~337]{EAS}) and are equivalent, in the sense of metrics, to the geodesic distance. Since we use a log scale, the vertical axes of the two graphs in Figure~\ref{fig:plot1} are effectively both geodesic distance and, in particular, their values may be compared. The conclusion is clear: While Algorithms~\ref{alg:sd} (\textsc{bb}) and \ref{alg:nt} (\textsc{nt}) in the involution model attain a level of accuracy on the order of machine precision, the corresponding algorithms in the Stiefel model do not. The reason is numerical stability, as we will see next.
\begin{figure}[h]
\caption{Convergence behavior of algorithms in the Stiefel and involution models.}
\label{fig:plot1}
\centering
\includegraphics[width=0.75\textwidth]{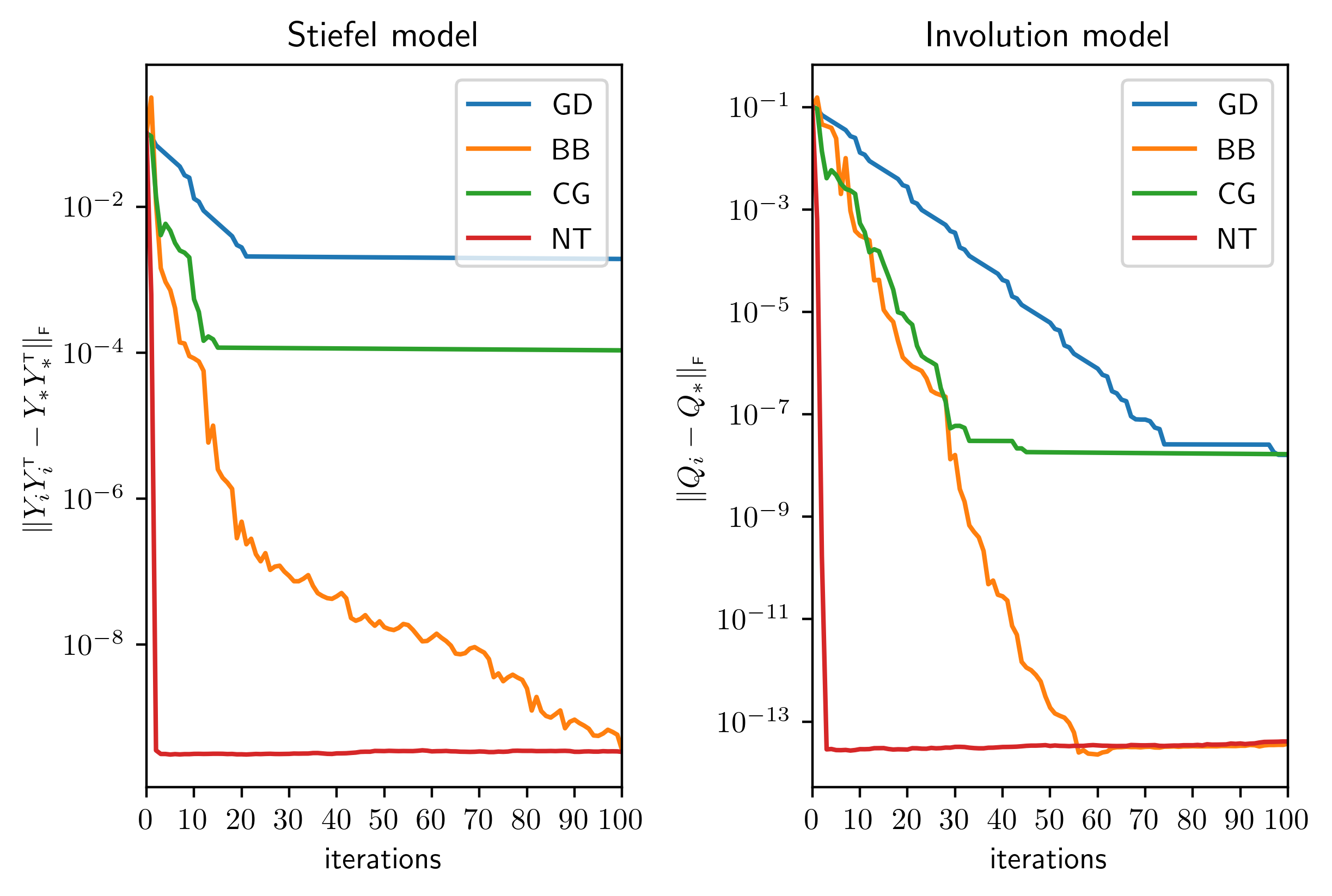}
\end{figure}

Figure~\ref{fig:plot2} shows the loss of orthogonality for various algorithms in the Stiefel and involution models, measured respectively by $\|Y_i^\tp Y_i - I\|_\F$ and $\|Q_i^2 - I\|_\F$. In the Stiefel model, the deviation from orthogonality $\|Y_i^\tp Y_i - I\|_\F$ grows exponentially. In the worst case, the \textsc{gd} iterates $Y_i$, which of course ought to be of rank $k=6$, actually converged to a rank-one matrix. In the involution model, the deviation from orthogonality  $\|Q_i^2 - I\|_\F$ remains below $10^{-13}$ for all algorithms --- the loss-of-orthogonality is barely noticeable.
\begin{figure}[h]
\caption{Loss of orthogonality in Stiefel and involution models.}
\label{fig:plot2}
\centering
\includegraphics[width=0.75\textwidth]{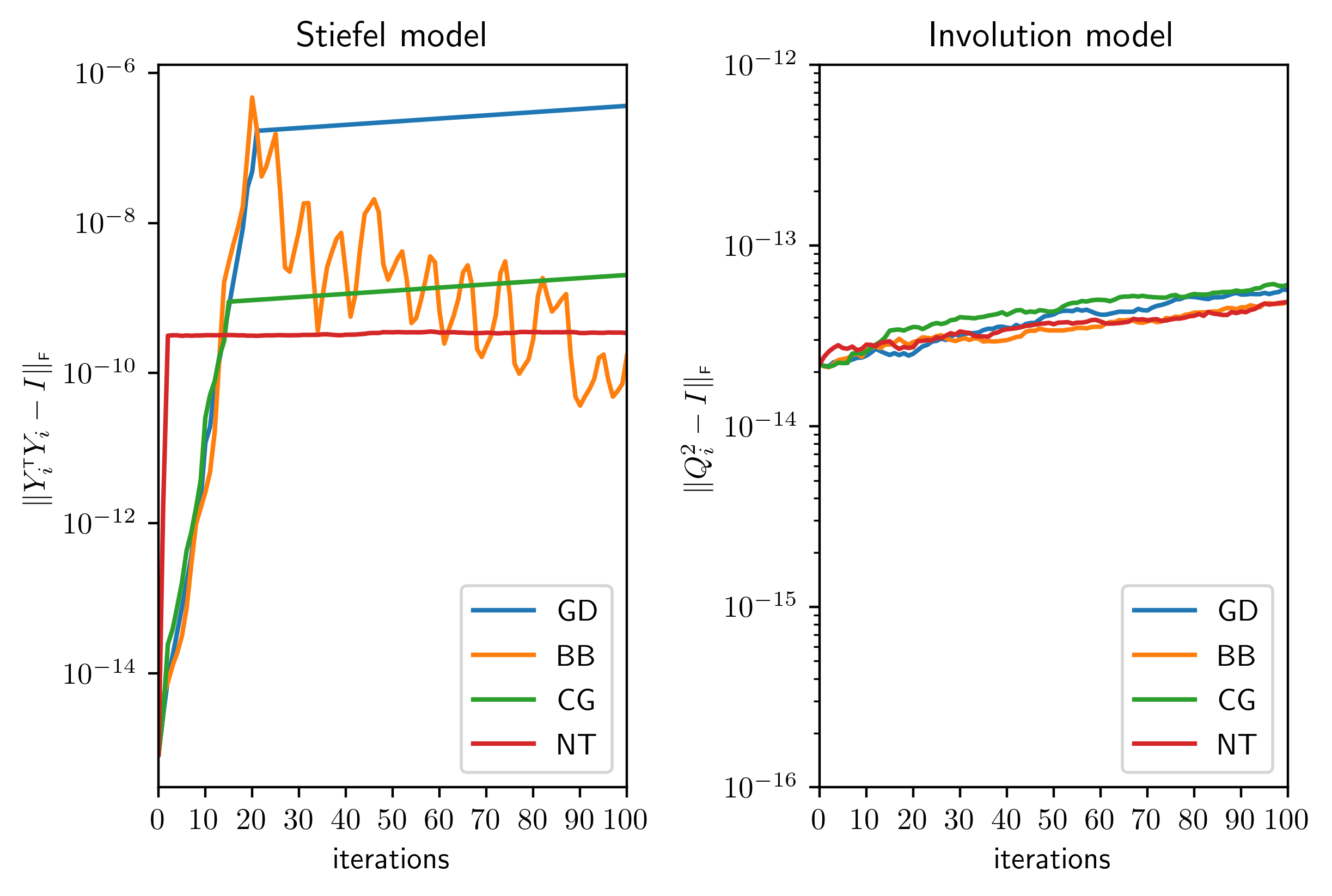}
\end{figure}

A closer inspection of the algorithms for \textsc{nt} \cite[p.~325]{EAS} and \textsc{cg} \cite[p.~327]{EAS} in the Stiefel model reveals why: A point $Y_i$ and the gradient $G_i$  at that point are highly dependent on each other --- an $\varepsilon$-deviation from orthogonality in $Y_i$ results in an $\varepsilon$-error in $G_i$ that in turn becomes a $2\varepsilon$-deviation from orthogonality in $Y_{i+1}$, i.e., one loses orthogonality at an exponential rate. We may of course reorthogonalize $Y_i$ at every iteration in the Stiefel model to artificially enforce the orthonormality of its columns but this incurs additional cost and turns a Riemannian algorithm into a retraction algorithm, as reorthogonalization of $Y_i$ is effectively a \textsc{qr} retraction.

Contrast this with the involution model: In Algorithms~\ref{alg:nt} (\textsc{nt}) and \ref{alg:cg} (\textsc{cg}), the point $Q_i$ and the effective gradient $G_i$ are both computed directly from the  eigenbasis $V_i$, which is updated to $V_{i+1}$ by an orthogonal matrix, or a sequence of Givens rotations if one uses Strang splitting as in  \eqref{eq:strang}. This introduces a small (constant order) deviation from orthogonality each step. Consequently, the deviation from orthogonality  at worst grows linearly.

\subsection{Grassmann Procrustes problem}\label{sec:procrustes}

Let $k,m,n \in \mathbb{N}$ with $k \le n$. Let $A \in \mathbb{R}^{m \times n}$ and $B \in \mathbb{R}^{m \times k}$. The minimization problem
\begin{equation}\label{eq:GP}
\min_{Q^\tp Q = I} \lVert A - BQ \rVert_\F,
\end{equation}
is called the Stiefel Procrustes problem \cite[Section~3.5.2]{EAS} and the special case $k=n$ is the usual orthogonal Procrustes problem \cite[Section~6.4.1]{GVL}. Respectively, these are
\[
\min_{Q \in \V(k,n)} \lVert A - BQ \rVert_\F \qquad \text{and}\qquad \min_{Q \in \O(n)} \lVert A - BQ \rVert_\F.
\]
One might perhaps wonder if there is also a \emph{Grassmann Procrustes problem}
\begin{equation}\label{eq:GP}
\min_{Q \in \Gr(k,n)} \lVert A - BQ \rVert_\F. 
\end{equation}
Note that here we require $m = n$.
In fact, with the involution model for $\Gr(k,n)$, the problem \eqref{eq:GP} makes perfect sense. The same argument in the proof of Lemma~\ref{lem:approx} shows that the minimizer $Q_*$ of \eqref{eq:GP} is given by $Q_* = V I_{k,n-k} V^\tp$ where
\[
\frac{A^\tp B + B^\tp A}{2} = V D V^\tp
\]
is an eigendecomposition with eigenbasis $V \in \O(n)$ and eigenvalues $D \coloneqq \diag (\lambda_1,\dots,\lambda_n)$ in descending order. The convergence and loss-of-orthogonality behaviors for this problem are very similar to those in Section~\ref{sec:quad} and provides further confirmation for the earlier numerical results. The plots from solving \eqref{eq:GP} for arbitrary $A,B$ using any of Algorithms~\ref{alg:sd}--\ref{alg:qn} are generated in our codes but given that they are nearly identical to Figures~\ref{fig:plot1} and \ref{fig:plot2} we omit them here.

\subsection{Fr\'echet mean and Karcher mean}

Let $Q_1,\dots,Q_m \in \Gr(k,n)$ and consider the sum-of-square-distances minimization problem:
\begin{equation}\label{eq:km}
\min_{Q \in \Gr(k,n)} \sum_{j=1}^{m} d^2(Q_j,Q),
\end{equation}
where $d$ is the geodesic distance in \eqref{eq:geodist}. The global minimizer of this problem is called the \emph{Fr\'echet mean} and a local minimizer is called a \emph{Karcher mean} \cite{Kar2}. For the case $m =2$, a Fr\'echet mean is the midpoint, i.e., $t =1/2$, of the geodesic connecting $Q_1$ and $Q_2$ given by the closed-form expression in Proposition~\ref{prop:geo}.
The objective function $f$ in \eqref{eq:km} is differentiable almost everywhere\footnote{$f$ is nondifferentiable only when $Q$ falls on the cut locus of $Q_i$ for some $i$ but the union of all cut loci of $Q_1,\dots,Q_m$ has codimension $\ge 1$.}   with its Riemannian gradient \cite{Kar} given by
\[
\grad f(Q) = 2 \sum_{j=1}^{m}  \log_Q (Q_j),
\]
where the logarithmic map is as in Corollary~\ref{cor:log}. To the best of our knowledge, there is no simple expression for $\Hess f(Q)$ and as such we exclude Newton method from consideration below.

\begin{figure}[h]
\caption{Convergence behavior of algorithms in the Stiefel and involution models.}
\label{fig:plot3}
\centering
\includegraphics[width=0.75\textwidth]{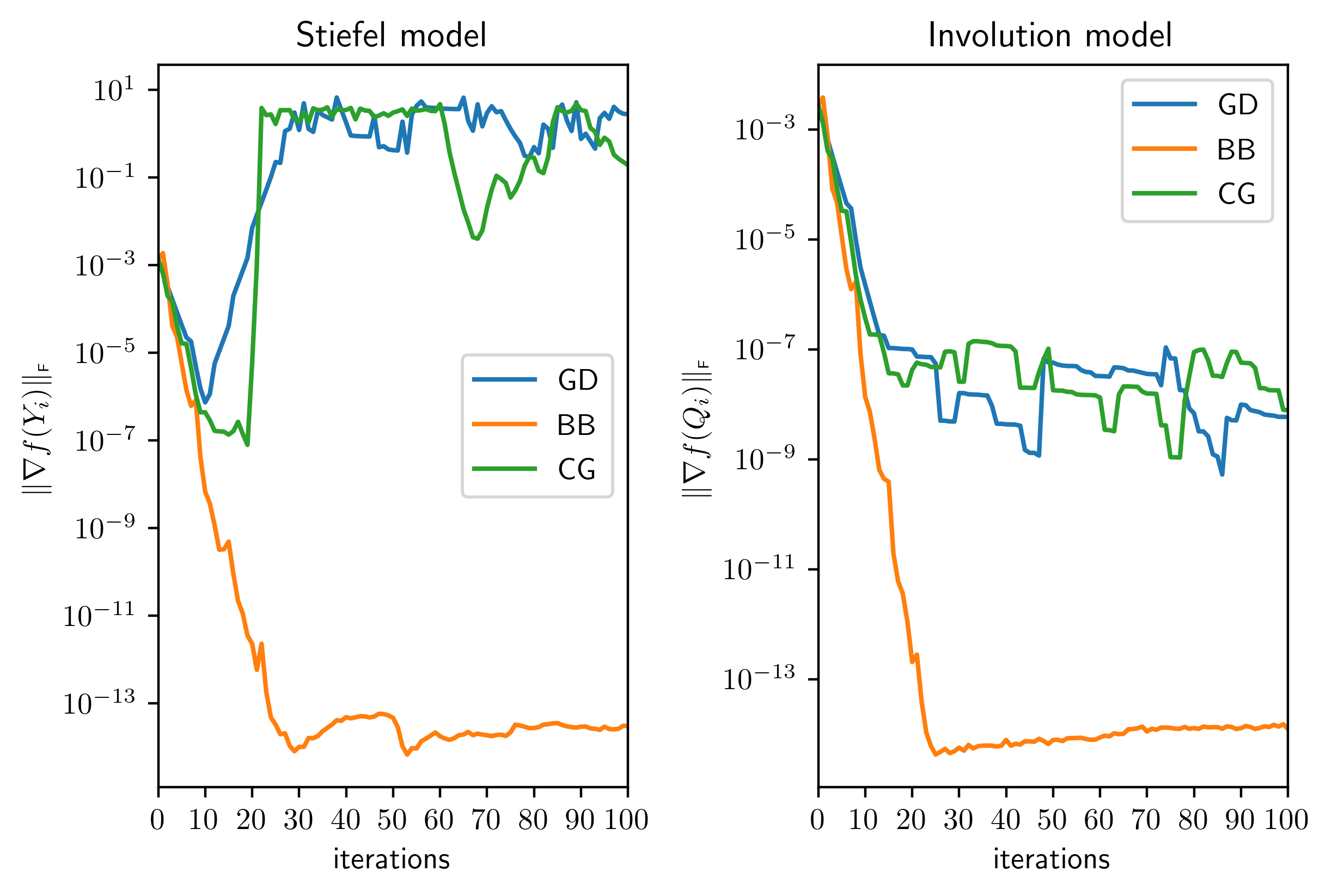}
\end{figure}

We will set $k = 6$, $n = 16$, and $m =3$. Unlike the problems in Sections~\ref{sec:quad} and \ref{sec:procrustes}, the problem in \eqref{eq:km} does not have a closed-form solution when $m > 2$. Consequently we quantify convergence behavior in Figure~\ref{fig:plot3} by the rate gradient goes to zero. The deviation from orthogonality is quantified as in Section~\ref{sec:quad} and shown in 
Figure~\ref{fig:plot4}.  The instability of the algorithms in the Stiefel model is considerably more pronounced here --- both \textsc{gd} and \textsc{cg} failed to converge to a stationary point as we see in Figure~\ref{fig:plot3}.  The cause, as revealed by Figure~\ref{fig:plot4}, is a severe loss-of-orthogonality that we will elaborate below.

The expression for geodesic distance $d(Y,Y')$ between two points $Y,Y'$ in the Stiefel model (see \cite[Section~3.8]{AMS} or \cite[Equation~7]{YL}) is predicated on the crucial assumption that each of these matrices has orthonormal columns. As a result, a moderate deviation from orthonormality in an iterate $Y$ leads to vastly inaccurate values in the objective function value $f(Y)$, which is a sum of $m$ geodesic distances \emph{squared}. This is reflected in the graphs  on the left of Figure~\ref{fig:plot3} for the \textsc{gd} and \textsc{cg} algorithms, whose step sizes  come from line search and depend on these function values. Using the \textsc{bb} step size, which does not depend on objective function values, avoids the issue. But for \textsc{gd} and \textsc{cg}, the reliance on inaccurate function values leads to further loss-of-orthogonality, and when the columns of an iterate $Y$ are far from orthonormal, plugging $Y$ into the expression for gradient simply yields a nonsensical result, at times even giving an \emph{ascent} direction in a minimization problem.\footnote{This last observation is from a plot of the function values that comes with our code but is not included here.}

For all three algorithms in the involution model, the deviation from orthogonality in the iterates is kept at a negligible level of under $10^{-13}$ over the course of 100 iterations.

\begin{figure}[h]
\caption{Loss of orthogonality in the Stiefel and involution models.}
\label{fig:plot4}
\centering
\includegraphics[width=0.75\textwidth]{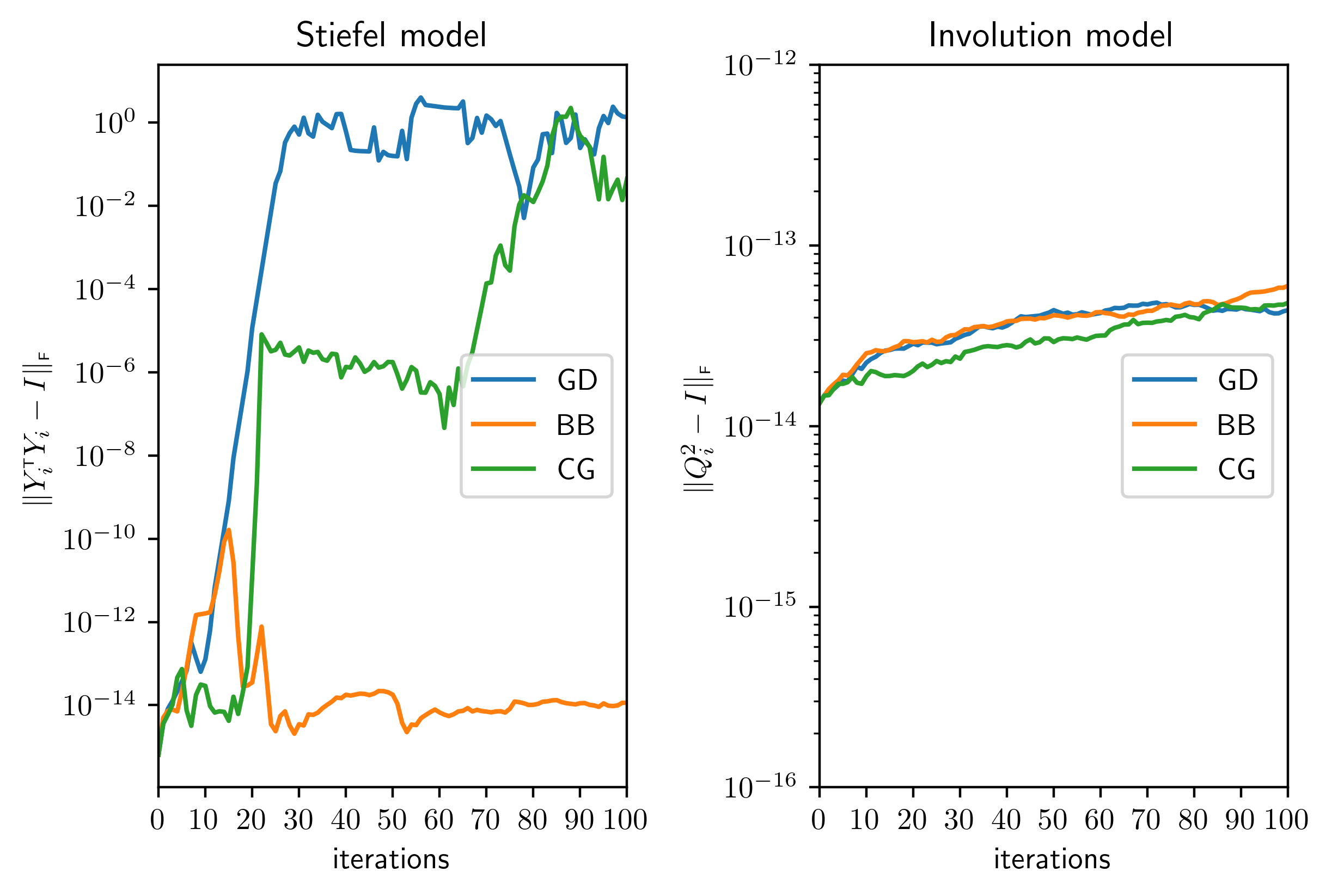}
\end{figure}

\subsection*{Acknowledgment}

We would like to acknowledge the intellectual debt we owe to \cite{AMS,EAS,HHT}. The work in this article  would not have been possible without drawing from their prior investigations.

ZL is supported by a Neubauer Family Distinguished Doctoral Fellowship from the University of Chicago. LHL is supported by  NSF IIS 1546413, DMS 1854831, and the Eckhardt Faculty Fund. KY is supported by NSFC Grant no.~11688101, NSFC Grant no.~11801548 and National Key R\&D Program of China Grant no.~2018YFA0306702.

\end{document}